\def\real{\mathbb{R}}
\newcommand{\until}[1]{\{1,\dots, #1\}}
\newcommand{\subscr}[2]{#1_{\textup{#2}}}
\newcommand{\supscr}[2]{#1^{\textup{#2}}}
\newcommand{\setdef}[2]{\{#1 \; | \; #2\}}
\newcommand{\map}[3]{#1: #2 \to #3}
\newcommand{\union}{\operatorname{\cup}}
\newcommand{\intersection}{\ensuremath{\operatorname{\cap}}}
\newcommand{\subject}{\text{subject to}}
\newcommand{\minimize}{\text{minimize}}
\newcommand{\argmin}{\operatorname{argmin}}
\newcommand\oprocendsymbol{\hbox{$\square$}}
\newcommand\oprocend{\relax\ifmmode\else\unskip\hfill\fi\oprocendsymbol}
\newcommand\bit[1]{\textit{\textbf{#1}}}
\newcommand\vs[1]{{\color{black}#1}}
\def \bs {\boldsymbol}
\def \mc {\mathcal}
\newtheorem{theorem}{Theorem}
\newtheorem{corollary}[theorem]{Corollary}
\newtheorem{lemma}[theorem]{Lemma}
\newtheorem{assumption}{Assumption}
\newtheorem{proposition}[theorem]{Proposition}
\newtheorem{remark}{Remark}
\title{SIS Epidemic Spreading under Multi-layer Population Dispersal in Patchy Environments
\thanks{This work was supported by ARO grant W911NF-18-1-0325.}
\thanks{An earlier version of this work~\cite{VA-VS:19j} appeared at the 2020 American Control Conference. In addition to the ideas presented in~\cite{VA-VS:19j}, this paper contains alternative proofs, additional analysis, as well as an optimal intervention design for epidemic containment.}
}
\author{Vishal Abhishek and Vaibhav Srivastava
\thanks{V. Abhishek is with the Department of Mechanical Engineering,
       Michigan State University,
       East Lansing, MI 48824-1226, USA
        {\tt\small abhishe3 at egr.msu.edu}}%
\thanks{V. Srivastava is with the Electrical and Computer Engineering, Michigan State University, East Lansing, MI 48824-1226, USA
        {\tt\small vaibhav at egr.msu.edu}}%
}
\begin{document}







\maketitle

\begin{abstract}
We study SIS epidemic spreading models under population dispersal on multi-layer networks. We consider a patchy environment in which each patch comprises individuals belonging to different classes. Individuals disperse to other patches on a multi-layer network in which each layer corresponds to a class. The dispersal on each layer is modeled by a Continuous Time Markov Chain (CTMC).  At each time, individuals disperse according to their CTMC and subsequently interact with the local individuals in the patch according to an SIS model. We establish the existence of various equilibria under different parameter regimes and establish their (almost) global asymptotic stability using Lyapunov techniques. We also derive simple conditions that highlight the influence of the multi-layer network on the stability of these equilibria. For this model, we study optimal intervention strategies using a convex optimization framework. Finally, we numerically illustrate the influence of the multi-layer network structure and the effectiveness of the optimal intervention strategies. 	
\end{abstract}

\begin{IEEEkeywords}
Epidemic models, population dispersal, multi-layer networks, optimal intervention
\end{IEEEkeywords}


\section{Introduction}

Understanding processes underlying the spread of social activities, beliefs, and influence are of foundational interest for the efficient design of socio-technical systems. Deterministic epidemic propagation models have been extensively used to study such spreading processes in the context of epidemics~\cite{anderson1992infectious,DE-JK:10}, the spread of computer viruses~\cite{kleinberg2007computing,wang2009understanding}, routing in mobile communication networks ~\cite{zhang2007performance}, and the spread of rumors~\cite{jin2013epidemiological}. While classical deterministic propagation models~\cite{hethcote2000mathematics} rely on the assumption that the overall population is well-mixed, several generalizations have been explored that incorporate the sparse interaction network among individuals or sub-populations into the model~\cite{lajmanovich1976deterministic,ganesh2005effect,AnalysisandControlofEpidemics_ControlSysMagazine_Pappas,meiBullo2017ReviewPaper_DeterministicEpidemicNetworks,zino2021analysis}. 

In this paper, we focus on a class of such models in which sub-populations are distributed across spatial patches, and individuals interact within the patch using a classical  epidemic propagation model and interact across patches by physically moving to other patches~\cite{wang2004epidemic, jin2005effect,li2009global,arino2005multi}. Individuals may belong to different categories based on their features such as age groups, socioeconomic status, and social preferences, which determine their dispersal pattern. Using Lyapunov techniques, we characterize the steady-state behavior of the model under different parameter regimes and characterize the influence of individuals’ dispersal patterns on the epidemic dynamics.	

Epidemic models have been extensively studied in the literature. The two most widely studied models are SIS (Susceptible-Infected-Susceptible) and SIR (Susceptible-Infected-Recovered) models, wherein individuals are classified into one of the three compartments: susceptible, infected, or recovered, and the dynamics of the fraction of the population in each compartment  is studied~\cite{hethcote2000mathematics}.  Some common generalizations of the SIR/SIS models include SEIR model~\cite{ AnalysisandControlofEpidemics_ControlSysMagazine_Pappas, mesbahi2010graph}, where an additional compartment “exposed” is introduced, SIRS~\cite{DE-JK:10}, where individuals get temporary immunity after recovery and then become susceptible again, and SIRI~\cite{gomez2015abruptTransitionsSIRI, pagliara_NaomiL2018bistability}, where after recovery, agents become susceptible with a different rate of infection.

Networked epidemic models attempt to capture the heterogeneity and structured interactions among subpopulations. Several approaches have been developed to this end. A popular approach to networked epidemic models considers subpopulations as nodes in the interaction graph and models epidemic propagation at each node based on the local subpopulation as well as subpopulations at neighboring nodes~\cite{lajmanovich1976deterministic,ganesh2005effect,AnalysisandControlofEpidemics_ControlSysMagazine_Pappas, meiBullo2017ReviewPaper_DeterministicEpidemicNetworks, fall2007epidemiological, khanafer_Basar2016stabilityEpidemicDirectedGraph,pagliara2020adaptive,pare2020modeling}. These models implicitly assume that the number of individuals at each node remains constant  (if there is no birth or death) and interactions with neighboring nodes do not involve the dispersal of individuals.  Variants of these models with time-varying interaction graphs have also been studied~\cite{bokharaie2010_EpidemicTVnetwork, Preciado2016_EpidemicTVnetwork,  Beck2018_EpidemicTimeVaryingNetwork}.

Another approach to networked epidemic models uses heterogeneous mean-field theory and considers random interaction graphs among individuals. The individuals are classified based on their degree in the interaction graph and the coupled dynamics of individuals in different classes are studied~\cite{vespignani2012modelling, hota2021impacts}. 

The third approach to networked epidemic models considers patchy environments with population dispersal. Each patch contains a subpopulation and the connectivity of the patches is modeled using a dispersal graph. Individuals within each patch interact according to classical epidemic models and disperse to neighboring patches in the dispersal graph~\cite{wang2004epidemic,jin2005effect,li2009global}. Continuum approximations of these models have been studied  using reaction-diffusion processes~\cite{colizza2008epidemicReaction-DiffusionMetapopuln,Saldana2008continoustime_Reaction-DiffnMetapopln}. A closely related class of models is called multi-city models~\cite{lewien2019time,arino2003multi} in which the individuals are identified by their patches and the model keeps track of the distribution of individuals from each patch. Alternative epidemic models with population flows have also been studied~\cite{butler2021effect,possieri2019mathematical,bichara2015sis,ye2020network}.

Multi-layer epidemic models have also been studied. Classical epidemic models have been extended to include heterogeneity due to age groups in epidemic propagation~\cite{hethcote2000mathematics}. Multi-layer networked epidemic models have also been used to study epidemic spread under competitive viruses~\cite{sahneh2014competitive,gracy2022modeling,ye2022convergence}. 

Epidemic spread with dispersal on a multi-layer network of patches has been modeled and studied in~\cite{soriano2018spreading_MultiplexMobilityNetwork_Metapopln}, wherein  at each time, individuals randomly move to another patch, participate in epidemic propagation and then return to their home patch. A multi-species SEIR epidemic model with population dispersal has been studied in~\cite{arino2005multi}, wherein each species may have a different dispersal pattern corresponding to each layer in the multi-layer graph. SIR epidemic dynamics with multi-layer population dispersal have been studied in~\cite{VA-VS:20e}.

The problem of optimal intervention in terms of designing networks and allocating resources for efficiently mitigating epidemics has also received significant attention. A prominent approach leverages a static convex optimization framework to determine optimal intervention parameters such that the convergence rate for the disease-free equilibrium is maximized~\cite{preciado2013convex,preciado2013traffic,drakopoulos2014efficient,wan2007network,nowzari2015optimal}. Some works have also explored dynamic optimization~\cite{khanafer2014information,watkins2019robust} as well as feedback control~\cite{wang2022state} for designing interventions. 

In this paper, we focus on the class of networked epidemic models which consider patchy environments with population dispersal. Individuals may belong to different categories based on their features such as age groups, socioeconomic status, and social preferences, which determine their dispersal pattern. We consider a multi-layer model in which each layer corresponds to a category of individuals and captures the associated dispersal pattern. Individuals within each patch can travel across patches according to a Continuous Time Markov Chain (CTMC) characterizing their dispersal pattern and upon reaching a new patch participate in the local SIS epidemic process. We extend the results for the deterministic network SIS model~\cite{AnalysisandControlofEpidemics_ControlSysMagazine_Pappas, meiBullo2017ReviewPaper_DeterministicEpidemicNetworks, fall2007epidemiological, khanafer_Basar2016stabilityEpidemicDirectedGraph} to the networked SIS model with population dispersal and characterize its steady-state and stability properties. We also study optimal intervention strategies for epidemic containment using this model. To the best of our knowledge, the convex optimization framework for designing optimal intervention has not been applied to epidemic models with population dispersal.

The multi-layer dispersal model in this work is identical to the  population dispersal model used in the multi-species SEIR epidemic model~\cite{arino2005multi}. The model studied in this paper is also similar to the epidemic models under population dispersal studied in~\cite{wang2004epidemic,jin2005effect,li2009global}. In these works, the dispersal patterns depend on the state (susceptible or infected) of the individuals, and conditions for global stability of the disease-free equilibrium and an endemic equilibrium are derived. When the dispersal patterns are identical for all individuals, then these models reduce to a model similar to the single-layer version of the multi-layer model studied in this paper.	In contrast to these works, we focus on the SIS epidemic model with multi-layer population dispersal, and our approach builds upon Lyapunov techniques developed in~\cite{fall2007epidemiological,khanafer_Basar2016stabilityEpidemicDirectedGraph}, which allows us to leverage input-to-state stability ideas and consider non-irreducible interaction graphs.

The major contributions of this paper are fourfold. First, we derive a deterministic continuum limit model describing the interaction of the SIS dynamics with the multi-layer Markovian population dispersal dynamics. The obtained model is similar to the model studied in~\cite{arino2005multi}; however, our presentation derives the model from the first principles. Second, we rigorously characterize the existence and stability of the equilibrium points of the derived model under different parameter regimes. Third, for the stability of the disease-free equilibrium, we determine some useful sufficient conditions which highlight the influence of multi-layer dispersal on the steady-state behavior of the dynamics. Fourth, we study optimal intervention strategies for epidemic containment using our model. Finally, we numerically illustrate that the derived model approximates well the stochastic model with a finite population. We illustrate the influence of the network topology on the transient properties of the model. We also illustrate the effectiveness of optimal intervention strategies.

The remainder of this paper is organized in the following way. In Section \ref{Sec: Mobility Modeling as Continous-Time Markov Process}, we derive the epidemic model under multi-layer dispersal as a continuum limit to two interacting stochastic processes. In Section \ref{sec: analysis}, we characterize the existence and stability of disease-free and endemic equilibrium for the derived model with strongly connected multi-layer interaction. In Section~\ref{sec: analysis-non-strong-connected}, we relax the strong connectivity assumption. We study optimal intervention strategies in Section~\ref{sec:resource-allocation} and draw additional insights on the ideas in the paper through numerical examples in Section \ref{Sec: numerical studies}. Finally, we conclude in Section \ref{Sec: conclusions}.

\subsection*{Mathematical notation}
For any two real vectors $\bs x$, $\bs y \in \real^n$, we denote:\\
$\bs x \gg \bs y$, if $x_i > y_i$ for all $i \in \until{n}$,\\
$\bs x \geq \bs y$, if $x_i \geq y_i$ for all $i \in \until{n}$,\\
$\bs x > \bs y$, if $x_i \geq y_i$ for all $i \in \until{n}$ and $\bs x \neq \bs y$.\\
For a square matrix $G$, spectral abscissa $\map{\mu}{\real^{n\times n}}{\real}$ is defined by 
\[
\mu(G) = \max \setdef{\mathrm{Re}(\lambda)}{\lambda \text{ is an eigenvalue of $G$}}, 
\]
where $\mathrm{Re}(\cdot)$ denotes the real part of the argument. 
Spectral radius $\rho$ is defined by
\[
\rho(G) = \max \setdef{|\lambda|}{\lambda \text{ is an eigenvalue of $G$}}, 
\]
where $|(\cdot)|$ denotes the absolute value of the argument.
For any vector $\bs x = [x_1,\dots,x_n]^\top$, $X=\operatorname{diag}(\bs x)$ is a diagonal matrix with $X_{ii}=x_i$ for all $i \in \until{n}$. Let $\otimes$ denote the Kronecker product and \vs{$\oplus$ denote matrix direct sum.}


\section{SIS Model with Multi-layer Population Dispersal} \label{Sec: Mobility Modeling as Continous-Time Markov Process}

We consider $n$ sub-populations of individuals that are located in distinct spatial regions (patches). We assume the individuals within each patch can be classified into two categories: (i) susceptible, and (ii) infected. We assume that the individuals within each patch are further grouped into $m$ classes, deciding how they travel to other patches. Let the connectivity of these patches corresponding to the dispersal pattern of each class $\alpha \in \until{m}$ be modeled by a digraph $\mc G^\alpha = (\mc V, \mc E^\alpha)$, where $\mc V =\until{n}$ is the node (patch) set and $\mc E^\alpha \subset \mc V \times \mc V$ is the edge set. We model the dispersal of individuals on each graph $\mc G^\alpha$ using a Continuous Time Markov Chain (CTMC) with generator matrix $Q^\alpha$, whose $(i,j)$-th entry is $q^\alpha_{ij}$. The entry $q^\alpha_{i j} \ge 0$, $i \ne j$, is the instantaneous transition rate from node $i$ to node $j$, and $-q^\alpha_{ii}= \nu^\alpha_{i}$ is the total rate of transition out of node $i$, i.e., $\nu^\alpha_{i} = \sum_{j \ne i}q^\alpha_{i j}$. Here, $q^\alpha_{ij} >0$, if $(i,j) \in \mc E^\alpha$; and $q^\alpha_{ij}=0$, otherwise.

We model the interaction of population dispersal with the epidemic process as follows. At each time $t$, individuals of each class $\alpha$ within each node move on graph $\mc G^\alpha$ according to the CTMC with generator matrix $Q^\alpha$ and then interact with individuals within their current node according to an SIS epidemic process.  For the epidemic process at node $i$ \vs{and for individuals in class $\alpha$, let $\beta_i^\alpha >0$ and $\delta_i^\alpha \ge 0$} be the infection and recovery rate, respectively. We let $B^\alpha >0$ and $D^\alpha \ge 0$ be the positive and non-negative diagonal matrices with entries \vs{$\beta_i^\alpha$ and $\delta_i^\alpha$}, $i \in \until{n}$, respectively. \vs{Let $B = \oplus_{\alpha=1}^m B^\alpha$ and $D = \oplus_{\alpha=1}^m D^\alpha$.}


Let $x^\alpha_{i}(t)$ be the number of individuals of class $\alpha$ in patch $i$ at time $t$. Let $p^\alpha_i (t) \in [0,1]$ (respectively, $1-p^\alpha_i (t)$) be the fraction of infected (respectively, susceptible) sub-population of class $\alpha$ at patch $i$. Let 
\begin{align*}
\bs p^\alpha &= \begin{bmatrix}
p^\alpha_{1} \\ \vdots \\p^\alpha_n
\end{bmatrix}, \; 
\bs x^\alpha =\begin{bmatrix}
x^\alpha_{1} \\ \vdots \\ x^\alpha_n
\end{bmatrix}, \; 
\bs p = \begin{bmatrix}
\bs p^1 \\ \vdots \\  \bs p^m
\end{bmatrix},   \text{ and }
 \bs x = \begin{bmatrix}
\bs x^1\\ \vdots\\  \bs x^m
\end{bmatrix}. 
\end{align*}
Define $P^\alpha:=\operatorname{diag}(\bs p^\alpha)$ and $P:=\operatorname{diag}(\bs p)$. For clarity of exposition, we make the following assumption on the connectivity of the dispersal graphs at each layer. We will relax this condition later in Section~\ref{sec: analysis-non-strong-connected}. 

\begin{assumption}[\bit{Strong connectivity}] \label{Assumption:StrongConnectivity}
 Digraph $\mc G^\alpha$ is strongly connected, for all $\alpha \in \until{m}$, which is equivalent to matrices $Q^\alpha$ being irreducible \cite{Bullo-book_Networks}. \vs{A digraph is said to be strongly connected if each node can be reached from every other node by traversing directed edges.} \oprocend
\end{assumption}

Assumption~\ref{Assumption:StrongConnectivity} ensures that the population of every class of individuals in every patch is always non-zero, provided it is initially non-zero. We now derive the continuous-time dynamics that capture the interaction of population dispersal and the SIS epidemic dynamics under Assumption~\ref{Assumption:StrongConnectivity}.

Let \vs{$L = \oplus_{\alpha=1}^m L^\alpha \in \real^{nm \times nm}$}, 
where $L^\alpha(\bs{x}) \in  \real^{n\times n}$ is a matrix with entries 
	\[
	l^\alpha_{ij}(\bs x) = \begin{cases} \sum_{k\neq i}q^\alpha_{k i} \frac{x^\alpha_{k}}{x^\alpha_{i}}, & \text{if } i = j, \\
	-q^\alpha_{j i} \frac{x^\alpha_{j}}{x^\alpha_{i}}, & \text{otherwise},
	\end{cases}
	\]
\vs{	Let $F(\bs x) = \bs 1_m \otimes  \subscr{F}{blk}(\bs x) \in \real^{nm\times nm}$,
where $\subscr{F}{blk}(\bs x) = [F^1 (\bs x),\dots,F^m (\bs x)] \in \real^{n\times nm}$,} and $F^\alpha \in \real^{n \times n}$ is a diagonal matrix with entries $f^\alpha_i (\bs x):=\frac{x^\alpha_i}{\sum_\alpha x^\alpha_i}$, $i \in \until{n}$,  i.e., the fraction of total population at node $i$ contributed by class $\alpha$.




\begin{proposition}[\bit{SIS model with population dispersal}]\label{prop:model}
	Under Assumption~\ref{Assumption:StrongConnectivity}, the dynamics of the fractions of the infected sub-population $\bs p$ and the number of individuals $\bs x^\alpha$ under multi-layer population dispersal model with generator matrices $Q^\alpha$, and infection and recovery matrices $B$ and $D$, respectively, are
	\begin{subequations} \label{eq_Model}
		\begin{align}
		\dot{\bs{p}} & = (BF(\bs{x})-D-L(\bs{x}))\bs{p} - P BF(\bs{x}) \bs{p} \label{eq_p}\\ 
		\dot{\bs{x}}^\alpha & = (Q^\alpha)^\top \bs{x}^\alpha.  \label{eq_x}
		\end{align}
	\end{subequations}
\end{proposition}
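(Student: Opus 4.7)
The plan is to derive the model from a microscopic stochastic description and then pass to the continuum (large-population) limit, arguing equation~\eqref{eq_x} first since it decouples from the epidemic dynamics, and then equation~\eqref{eq_p} by writing a balance law for the number of infected individuals in each (class, patch) pair and applying the product rule.

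First I would handle~\eqref{eq_x}. In the stochastic model each individual of class $\alpha$ evolves independently as a CTMC on $\mc G^\alpha$ with generator $Q^\alpha$, and dispersal does not depend on the SIS status. Writing the forward Kolmogorov equation for the occupation of patch $i$ and summing over the independent individuals of class $\alpha$ gives
\[
\tfrac{d}{dt}\expt[x^\alpha_i] = \sum_{k \neq i} q^\alpha_{ki}\,\expt[x^\alpha_k] - \nu^\alpha_i\,\expt[x^\alpha_i] = \big[(Q^\alpha)^\top \expt[\bs x^\alpha]\big]_i.
\]
Taking a standard fluid/mean-field limit (e.g.\ via Kurtz's theorem for density-dependent Markov chains) justifies replacing $\expt[\bs x^\alpha]$ by the deterministic state $\bs x^\alpha$, yielding~\eqref{eq_x}. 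Assumption~\ref{Assumption:StrongConnectivity} guarantees irreducibility of $Q^\alpha$, so any initially positive $\bs x^\alpha(0)$ remains componentwise positive, which is what allows $F(\bs x)$ and $L(\bs x)$ to be well defined for all $t\ge 0$.

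Next I would tackle~\eqref{eq_p} through the number of infected $y_i^\alpha := x_i^\alpha p_i^\alpha$. The rate of change of $y_i^\alpha$ decomposes into three mechanisms. Dispersal contributes $\sum_{k\neq i} q^\alpha_{ki} y^\alpha_k - \nu^\alpha_i y^\alpha_i$ since infected individuals disperse by the same $Q^\alpha$. Recovery contributes $-\delta_i^\alpha y_i^\alpha$. For infection, I would use the well-mixed-within-patch assumption: after dispersal, the fraction of infected at patch $i$ (aggregated across classes) is $\sum_\gamma x^\gamma_i p^\gamma_i / \sum_\gamma x^\gamma_i = \sum_\gamma f^\gamma_i p^\gamma_i$, and each of the $x_i^\alpha (1-p_i^\alpha)$ susceptibles of class $\alpha$ becomes infected at rate $\beta_i^\alpha$ times that fraction. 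Summing,
\[
\dot y_i^\alpha = \sum_{k\neq i} q^\alpha_{ki}\, y^\alpha_k - \nu^\alpha_i\, y^\alpha_i + \beta_i^\alpha x_i^\alpha (1-p_i^\alpha)\sum_\gamma f^\gamma_i p^\gamma_i - \delta_i^\alpha y_i^\alpha.
\]
Applying the product rule $\dot y_i^\alpha = \dot x_i^\alpha p_i^\alpha + x_i^\alpha \dot p_i^\alpha$ and substituting $\dot x_i^\alpha$ from~\eqref{eq_x}, the transport terms collapse after dividing by $x_i^\alpha$ into $\sum_{k\neq i} q^\alpha_{ki}(x^\alpha_k/x^\alpha_i)(p^\alpha_k-p^\alpha_i)$, which is exactly $-[L^\alpha(\bs x)\bs p^\alpha]_i$ by the definition of $L^\alpha$. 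The remaining scalar identity
\[
\dot p_i^\alpha = -[L^\alpha(\bs x)\bs p^\alpha]_i + \beta_i^\alpha(1-p_i^\alpha)\sum_\gamma f^\gamma_i p^\gamma_i - \delta_i^\alpha p_i^\alpha
\]
is then assembled into the block form~\eqref{eq_p}: the $\alpha$-block of $F(\bs x)\bs p$ equals $\sum_\gamma F^\gamma \bs p^\gamma$ (entrywise $\sum_\gamma f^\gamma_i p^\gamma_i$) by the construction $F = \bs 1_m \otimes F_{\mathrm{blk}}$, so $BF(\bs x)\bs p$ produces the infection term, $-PBF(\bs x)\bs p$ enforces the susceptible factor $(1-p_i^\alpha)$, and $-D\bs p$ and $-L(\bs x)\bs p$ (with $L = \oplus_\alpha L^\alpha$) produce recovery and dispersal.

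The main obstacle is the infection bookkeeping: one must be careful that after dispersal individuals of all classes occupying patch $i$ mix uniformly, so the effective infection pressure felt by a class-$\alpha$ susceptible is $\sum_\gamma f^\gamma_i p^\gamma_i$ rather than simply $p^\alpha_i$, and that this cross-class coupling is precisely what the block-structured matrix $F(\bs x)$ encodes. Once this term is written correctly, the remainder is algebraic rearrangement, and the rigorous continuum limit is a standard density-dependent Markov chain argument applied to the enlarged state $(x^\alpha_i, y^\alpha_i)$.
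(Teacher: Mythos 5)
Your proposal is correct and follows essentially the same route as the paper: both derive \eqref{eq_p} from a balance for the number of infected individuals $x_i^\alpha p_i^\alpha$ in each (class, patch) pair—dispersal of infected via $Q^\alpha$, recovery, and infection driven by the class-aggregated fraction $\supscr{p}{avg}_i = \sum_\gamma f_i^\gamma p_i^\gamma$—and then extract $\dot p_i^\alpha$ using the population equation \eqref{eq_x}, the only cosmetic difference being that the paper works with a small-$h$ increment and takes $h \to 0^+$ while you write the continuous-time balance and apply the product rule (plus an optional appeal to Kurtz's theorem for the fluid limit, which the paper treats informally). Your identification of the cross-class coupling with the block structure $F = \bs 1_m \otimes \subscr{F}{blk}$ and of the transport terms with $-L^\alpha(\bs x)\bs p^\alpha$ matches the paper's vectorization step exactly.
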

\medskip

\begin{proof}
	Consider a small time increment $h>0$ at time $t$. Then the number of individuals of class $\alpha$ present at node $i$ after the evolution of CTMC in time-interval $[t, t+h)$ is
	\begin{equation} \label{eq_popln}
	x^\alpha_{i}(t+h)= x^\alpha_{i}(t)(1-\nu^\alpha_{i}h)+ \displaystyle\sum_{j\neq i}q^\alpha_{j i} x^\alpha_{j}(t)h + o(h) .
	\end{equation}
	After the dispersal, individuals within each node interact according to SIS dynamics. Thus, the \vs{number} of infected \vs{individuals} \vs{of class $\alpha$} present at node $i$ is: 
	\begin{multline} \label{eq_SIS_No of Infected}
\!\!\!\!\!	x^\alpha_{i}(t+h) p^\alpha_{i}(t+h) 
	= - x^\alpha_{i}(t) \delta_{i}^{\vs{\alpha}} p^\alpha_{i}(t)h + x^\alpha_{i}(t)\beta_{i}^{\vs{\alpha}} \supscr {p}{avg}_{i}(t)(1-p^\alpha_{i}(t))h \\
	 + x^\alpha_{i}(t) p^\alpha_{i}(t)(1-\nu^\alpha_{i}h) + \displaystyle\sum_{j\neq i}q^\alpha_{j i} p^\alpha_{j} x^\alpha_{j}(t)h + o(h). 
	\end{multline}
	where $\supscr{p_i}{avg}$ is the fraction of infected population at node $i$ and is given by:
	\[
\supscr {p}{avg}_i = \sum_\alpha f^\alpha_i p^\alpha_i.
	\]
	
	The first two terms on the right side of \eqref{eq_SIS_No of Infected} correspond to the epidemic process within each node, whereas the last two terms correspond to infected individuals coming from other nodes due to dispersal. Using the expression of $x^\alpha_{i}$ from \eqref{eq_popln} in \eqref{eq_SIS_No of Infected} and taking the limit $h \to 0^+$ gives
	\begin{align} \label{eq_SIS_pi}
	\dot{p}^\alpha_{i}= - \delta_{i}^{\vs{\alpha}} p^\alpha_{i} + \beta_{i}^{\vs{\alpha}} \supscr {p}{avg}_{i}(1-p^\alpha_{i})
	-l^\alpha_{ii} p^\alpha_{i} - \displaystyle\sum_{j\neq i} l^\alpha_{i j} p^\alpha_{j} .
	\end{align}
	Writing above in vector form gives:
	\begin{equation}
	\dot{\bs{p}}^\alpha = (-D^\alpha-L^\alpha(\bs{x}^\alpha))\bs{p}^\alpha + B^\alpha  \vs{\subscr{F}{blk}}(\bs{x}) \bs p- P^\alpha B^\alpha  \vs{\subscr{F}{blk}}(\bs{x}) \bs{p} .\label{eq_p_alpha}
	\end{equation}
	Similarly taking limits in \eqref{eq_popln} yields
	\begin{equation} \label{eq_popln_det}
	\dot{x}^\alpha_{i} = -\nu^\alpha_{i} x^\alpha_{i} + \displaystyle\sum_{j\neq i}q^\alpha_{j i} x^\alpha_{j}.
	\end{equation}
	Rewriting \eqref{eq_SIS_pi} and \eqref{eq_popln_det} in vector form establishes the proposition. 
\end{proof}


\vs{
\begin{remark}[\bit{Comparison with the classical networked SIS model}]
We now compare our model~\eqref{eq_Model} with the classical networked SIS dynamics defined on a single-layer network~\cite{AnalysisandControlofEpidemics_ControlSysMagazine_Pappas}:
\begin{equation}\label{eq:network-SIS}
    \dot{\bs p} = (\mathtt{B}-\mathtt{D}) \bs p - P \mathtt{B}\bs p,
\end{equation}
where $\mathtt{B} > 0$ is the infection rate matrix, whose entries $b_{ij}$'s determine the rate at which the infection may be transmitted from node $i$ to node $j$, and $\mathtt{D} > 0$ is a diagonal recovery rate matrix. The model~\eqref{eq:network-SIS} implicitly assumes that the sub-population size at each node is a constant, while in model~\eqref{eq_Model} sub-populations disperse across nodes. For a fair comparison, we specialize model~\eqref{eq_Model} to a single layer and assume that the population dispersal dynamics \eqref{eq_x} operates at its steady state $\bs x^*$. This reduces dynamics~\eqref{eq_p} to 
\begin{equation}\label{eq:single-layer-SIS}
    \dot{\bs p} = (B-D-L(\bs x^*)) \bs p - P B \bs p.
\end{equation}
Since $B$ is a diagonal matrix, it has a smaller number of parameters than the matrix $\mathtt{B}$. Thus, model~\eqref{eq:single-layer-SIS} leverages the population dispersal dynamics, to reduce the number of parameters in the infection rate matrix in a principled way. In several contexts, the data associated with population dispersal may be easier to obtain than the inter-node infection rate data. It is also noteworthy that the non-negative diagonal entries of $L(\bs x^*)$ contribute to increasing the effective recovery rate, while its non-positive off-diagonal entries contribute to the effective inter-node infection rates. 
\end{remark}}

\section{Analysis of SIS Model with Multi-layer Population Dispersal} \label{sec: analysis}

In this section, we analyze the SIS model with multi-layer population dispersal~\eqref{eq_Model} under the following standard assumptions: 

\begin{assumption} \label{Assumption:Dnot0}
\vs{For each layer $\alpha$, there exists a node $k_\alpha$ such that $\delta_{k_\alpha}^\alpha > 0$.} \oprocend
\end{assumption}

Let $\bs \pi^\alpha$ be the right eigenvector of $(Q^\alpha)^\top$ associated with eigenvalue at $0$. We assume that $\bs \pi^\alpha$ is scaled such that its inner product with the associated left eigenvector $\bs 1_{n}$ is unity, i.e., $\bs 1_{n}^\top \bs \pi^\alpha = 1$. Define $Q= \oplus_{\alpha=1}^m Q^\alpha$ and $\bs \pi:=[N^1 (\bs \pi^1)^\top, \dots, N^m (\bs \pi^m)^\top ]^\top $, where $N^\alpha$ is the total number of individuals belonging to class $\alpha$, for $\alpha \in \until{m}$. We call an equilibrium point $(\bs p^*, \bs x^*)$, an endemic equilibrium point if at equilibrium the disease does not die out, i.e., $\bs p^* \neq 0$, otherwise, we call it a disease-free equilibrium point. Let $F^*:=F(\bs x^*)=F(\bs \pi)$ and $L^*:=L(\bs x^*)=L(\bs \pi)$.

\begin{theorem}[\bit{Existence and Stability of Equilibria}] \label{thm:stability}
For the SIS model with multi-layer population dispersal~\eqref{eq_Model} under Assumption~\ref{Assumption:StrongConnectivity}, the following statements hold 
\begin{enumerate}
    \item if $\bs p(0) \in [0,1]^{nm}$, then $\bs p(t) \in [0,1]^{nm}$ for all $t>0$. Also, if $\bs p(0) > \bs 0_{nm}$, then $\bs p(t) \gg \bs 0_{nm}$ for all $t>0$;
    \item the model admits a disease-free equilibrium at $(\bs p^*, \bs x^*)= (\bs 0_{nm}, \bs \pi)$; 
    \item the model admits an endemic equilibrium at $(\bs p^*, \bs x^*) = (\subscr{\bs p}{end}, \bs \pi)$, $\subscr{\bs p}{end} \gg \bs 0$,  if and only if $\mu (BF^*-D-L^*) > 0$; 
    \item the disease-free equilibrium is globally asymptotically stable if and only if $\mu (BF^*-D-L^*) \leq 0$ and is unstable otherwise;
    \item the endemic equilibrium is almost globally asymptotically stable if $\mu (BF^*-D-L^*) > 0$ with region of attraction $\bs p(0) \in [0,1]^{nm} \setminus \{\bs 0_{nm}\}$. 
    \end{enumerate}

\end{theorem}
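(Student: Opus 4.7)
My plan exploits the cascade structure of \eqref{eq_Model}: the $\bs x$ subsystem \eqref{eq_x} is autonomous and linear and, by Assumption~\ref{Assumption:StrongConnectivity}, converges exponentially to $\bs x^* = \bs \pi$, so that $L(\bs x(t)) \to L^*$ and $F(\bs x(t)) \to F^*$ exponentially. I will therefore first analyze the limiting autonomous subsystem
\[
\dot{\bs p} = (BF^* - D - L^*)\bs p - P B F^* \bs p
\]
using Metzler-matrix and Perron--Frobenius tools, and then carry the conclusions to the full cascade via input-to-state-stability arguments in the spirit of~\cite{fall2007epidemiological, khanafer_Basar2016stabilityEpidemicDirectedGraph}.

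For (i) and (ii), the central observation is the identity $\sum_j l^\alpha_{ij}(\bs x) = 0$, which is immediate from the definition of $L^\alpha$, together with the sign pattern $l^\alpha_{ij} \le 0$ for $i \ne j$. These give $\dot p^\alpha_i \ge 0$ whenever $p^\alpha_i = 0$ and $\dot p^\alpha_i \le -\delta^\alpha_i \le 0$ whenever $p^\alpha_i = 1$, so Nagumo's theorem yields forward invariance of $[0,1]^{nm}$; strict positivity for $\bs p(0) > \bs 0$ then follows from the irreducible Metzler structure of the $\bs p$-linearization and the standard monotone-systems comparison. Statement (ii) is obtained by direct substitution using $(Q^\alpha)^\top \bs \pi^\alpha = \bs 0$ together with the fact that both $BF\bs p$ and $PBF\bs p$ vanish at $\bs p = \bs 0$.

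For (iii) and (iv), I will exploit that $M := BF^* - D - L^*$ is Metzler and, under Assumption~\ref{Assumption:StrongConnectivity}, irreducible: the $-L^{*\alpha}$ blocks connect nodes within each layer, while the entries $\beta^\alpha_i f^{*\beta}_i > 0$ (positive because $\bs \pi^\beta \gg \bs 0$) couple the layers at each common node. Perron--Frobenius then provides a simple dominant eigenvalue $\mu(M)$ with strictly positive right and left eigenvectors $\bs w, \bs v \gg \bs 0$. For (iv), the linear Lyapunov function $V(\bs p) = \bs v^\top \bs p$ satisfies, on the limit system, $\dot V = \mu(M) V - \bs v^\top P B F^* \bs p$; a short argument using $\bs v \gg \bs 0$ and positivity of the row sums of $BF^*$ shows that the damping term is non-negative and vanishes only at $\bs p = \bs 0$, so $\mu(M) \le 0$ yields GAS of the DFE (via LaSalle for the borderline $\mu(M) = 0$), while $\mu(M) > 0$ gives instability through linearization at the origin. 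A standard ISS estimate using exponential decay of $\|L(\bs x(t)) - L^*\|$ and $\|F(\bs x(t)) - F^*\|$ then lifts these conclusions to the full cascade. For (iii), existence when $\mu(M) > 0$ is obtained by a monotone-flow argument: for small $\varepsilon > 0$, $\varepsilon \bs w$ is a strict sub-solution of the cooperative limit system (since the nonlinear term $P B F^*\bs p$ is $O(\varepsilon^2)$ while $M(\varepsilon \bs w) = \varepsilon \mu(M) \bs w$ is positive and $O(\varepsilon)$), and $\bs 1_{nm}$ is a super-solution by (i); Hirsch's theorem on monotone systems yields an equilibrium $\subscr{\bs p}{end} \gg \bs 0$ strictly between them. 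Necessity follows because any $\subscr{\bs p}{end} \gg \bs 0$ satisfies $\bs v^\top M \subscr{\bs p}{end} = \bs v^\top P_{\mathrm{end}} B F^* \subscr{\bs p}{end} > 0$, forcing $\mu(M) > 0$.

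The main difficulty is (v). For the limit system I will adapt the Lajmanovich--Yorke-style Lyapunov pair
\[
\bar W(\bs p) = \max_{i,\alpha} \frac{p^\alpha_i}{(\subscr{\bs p}{end})^\alpha_i}, \qquad \underline W(\bs p) = \min_{i,\alpha} \frac{p^\alpha_i}{(\subscr{\bs p}{end})^\alpha_i},
\]
and, using Danskin's theorem together with the identity $M \subscr{\bs p}{end} = P_{\mathrm{end}} B F^* \subscr{\bs p}{end}$ and the irreducibility of $M$, show that $\bar W$ is non-increasing, $\underline W$ is non-decreasing, and both contract to $1$ along every trajectory with $\bs p(0) > \bs 0$. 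This gives almost global attractivity of $\subscr{\bs p}{end}$ for the limit system, with Lyapunov stability a local consequence of the same bracketing. Propagating the conclusion to the full cascade is the hardest step: because $\bs x(t) \to \bs \pi$ exponentially, the non-autonomous $\bs p$-subsystem is asymptotically autonomous in the sense of Markus--Thieme, so I plan to invoke the theory of asymptotically autonomous semiflows to conclude that the $\omega$-limit set of any trajectory with $\bs p(0) > \bs 0$ lies in $\{\bs 0\} \cup \{\subscr{\bs p}{end}\}$; the origin is then excluded by combining (i) with the instability of $\bs 0$ established in (iv).
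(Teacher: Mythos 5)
Your overall strategy -- analyze the autonomous limit system obtained by setting $\bs x = \bs \pi$, then lift the conclusions to the cascade using the exponential convergence of \eqref{eq_x} -- is the same as the paper's, but your tools are genuinely different at every step. For (iv) you use the linear Lyapunov function $V=\bs v^\top \bs p$ built from the left Perron eigenvector of the irreducible Metzler matrix $BF^*-D-L^*$ (Lajmanovich--Yorke/Fall et al.\ style), whereas the paper uses the Khanafer--Ba\c{s}ar diagonal quadratic form $\bs p^\top R\bs p$ together with an explicit ISS estimate for the cascade in the transformed variable $\bs y$. For (iii) you argue existence by cooperativity and sub-/super-solutions ($\varepsilon\bs w$ and $\bs 1_{nm}$) plus monotone-flow convergence, and necessity by pairing the equilibrium identity with $\bs v$; the paper instead builds a monotone map $H(\bs p)$ from M-matrix identities, applies Brouwer's fixed point theorem on $[\epsilon\bs u,\bs 1]$, and proves uniqueness separately. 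For (v) you propose max/min ratio functions at $\bs p_{\mathrm{end}}$ plus Markus--Thieme theory, whereas the paper re-centers at $\bs p_{\mathrm{end}}$, exploits that $(I-P^*)BF^*-D-L^*$ has spectral abscissa zero with eigenvector $\bs p_{\mathrm{end}}$ to get a diagonal quadratic Lyapunov function for $\tilde{\bs p}$, and uses a Barbalat-type lemma (its Lemma 3) plus the same ISS cascade template as in (iv). Your route is more elementary and exploits monotonicity/sub-homogeneity; the paper's buys explicit ISS gains, a single reusable Lyapunov template, and an explicit treatment (in a remark) of the fact that the equilibria sit on the boundary of $[0,1]^{nm}$, which Khalil's ISS lemmas do not directly cover -- a technicality you should also acknowledge.

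There is one genuine gap, in (v). The closing step ``the origin is excluded by combining (i) with the instability of $\bs 0$ established in (iv)'' does not suffice as written: linearized instability of an equilibrium does not prevent individual trajectories (its stable set) from converging to it, and for the non-autonomous cascade you must additionally rule out convergence to $\bs 0$ driven by the decaying mismatch terms $\tilde F,\tilde L$. What is needed is a uniform-persistence (repeller) argument, which your own machinery supplies: with $\mu(BF^*-D-L^*)>0$ and the left Perron vector $\bs v\gg\bs 0$, one has $\tfrac{d}{dt}(\bs v^\top\bs p) \geq \mu\,\bs v^\top\bs p - c_1\|\bs p\|_\infty\,\bs v^\top\bs p - c_2(\|\tilde F\|+\|\tilde L\|)\,\bs v^\top\bs p$, so for $t\geq T$ large (mismatch small) and $\|\bs p\|_\infty$ small the quantity $\bs v^\top\bs p$ strictly increases; since (i) guarantees $\bs p(T)\gg\bs 0$, no trajectory with $\bs p(0)>\bs 0$ can have $\bs 0$ in its $\omega$-limit set. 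Relatedly, Markus--Thieme chain recurrence of the $\omega$-limit set alone does not yield convergence to an equilibrium; you need the stronger Thieme convergence theorem for asymptotically autonomous semiflows, whose hypotheses (isolated equilibria of the limit system and no cyclic chain of connections) require the uniqueness of $\bs p_{\mathrm{end}}$ -- your ratio-function argument does deliver uniqueness, but it must be stated and used explicitly. With these two patches the proposal is sound.
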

\medskip

\begin{proof}
The first part of statement (i) follows from the fact that $\dot{\bs{p}}$ is either directed tangent or inside of the region $[0,1]^{nm}$ at its boundary which are surfaces with $p^\alpha_i =0$ or $1$ . This can be seen from \eqref{eq_SIS_pi}. For the second part of (i), rewrite \eqref{eq_p} as:
\begin{equation}\label{eq:thm2-i}
    \dot{\bs{p}} = ((I-P)BF(\bs x)+A(\bs{x}))\bs{p} - E(t) \bs{p}
\end{equation}
where $L(\bs x)=C(\bs x)-A(\bs x)$ with $C(\bs x)$ composed of the diagonal terms of $L(\bs x)$, $A(\bs x)$ is the non-negative matrix corresponding to the off-diagonal terms, and $E(t)=C(\bs x(t))+D$ is a diagonal matrix. Now, consider a variable change $\bs z(t) := e^{\int_{0}^{t}E(\tau) d\tau}\bs p(t)$. Differentiating $\bs z (t)$ and using~\eqref{eq:thm2-i} yields
\begin{align}\label{eq:z}
    \dot{\bs{z}} & 
    =e^{\int_{0}^{t}E(\tau) d\tau}((I-P)BF(\bs x)+A(\bs{x}))e^{\int_{0}^{t}-E(\tau) d\tau} \bs z.
\end{align}
Now, the coefficient matrix of $\bs z$ in~\eqref{eq:z} is always non-negative and strongly connected. The rest of the proof is the same as in \cite[Theorem 4.2 (i)]{meiBullo2017ReviewPaper_DeterministicEpidemicNetworks}.\\
The second statement follows by inspection.\\
The proof of the third statement is presented in Appendix~\ref{Appendix: existence of non-trivial eqb}. 

\noindent\textbf{Stability of disease-free equilibria:}
To prove the fourth statement, we begin by establishing sufficient conditions for instability. The linearization of \eqref{eq_Model} at $(\bs p, \bs x) = (\bs 0, \bs \pi)$ is
\begin{equation} \label{eq_px linear}
    \begin{bmatrix}
     \dot{\bs p} \\
     \dot{\bs x}
     \end{bmatrix} = \begin{bmatrix}
     BF^*-D-L^* & 0 \\
     0 & Q^\top
     \end{bmatrix}\begin{bmatrix}
        \bs p \\
        \bs x
     \end{bmatrix} .
\end{equation}

Since the system matrix in~\eqref{eq_px linear} is block-diagonal, its eigenvalues are the eigenvalues of the block-diagonal sub-matrices. Further, since spectral abscissa $\mu(Q^\top)$ is zero, a sufficient condition for instability of the disease-free equilibrium is that $\mu (BF^*-D-L^*) > 0$.

For the case of $\mu (BF^*-D-L^*) \leq 0$, we now show that the disease-free equilibrium is a globally asymptotically stable equilibrium.
It can be seen from the definitions of matrices $F^*$ and $L^*$, that under Assumption \ref{Assumption:StrongConnectivity}, $(BF^*-D-L^*)$ is an irreducible Metzler matrix. Together with $\mu (BF^*-D-L^*) \leq 0$, implies there exists a positive diagonal matrix $R$ such that 
\[
R(BF^*-D-L^*)+(BF^*-D-L^*)^ \top R = -K,
\]
where $K$ is a positive semi-definite matrix \cite[Proposition 1 (iv), Lemma A.1]{khanafer_Basar2016stabilityEpidemicDirectedGraph}.  Define $\tilde{L} := L(\bs x)-L^*$, $\tilde{F} := F(\bs x)-F^*$ and $r := \|R\|$, where $\|\cdot\|$  denotes the the induced two norm of the matrix. 

Recall $(Q^\alpha)^\top$ has exactly one eigenvalue at zero and all other eigenvalues have negative real parts. The right and left eigenvectors associated with the eigenvalue at zero are $\bs \pi^\alpha$ and $\bs 1_n$, respectively. Let $\bs y^\alpha \in \real^{n-1}$ be the projection of $\bs x^\alpha$ to the orthogonal space of $\bs \pi^\alpha$. 
Let the dynamics~\eqref{eq_x} projected onto the orthogonal space of $\bs \pi^\alpha$ be $\dot{\bs y}^\alpha = \Gamma^\alpha {\bs y^\alpha}$. 
It follows that $\Gamma^\alpha$ is Hurwitz and $\bs y^\alpha = \bs 0_{n-1}$ is a globally asymptotically stable equilibrium point. 
Let $\bs y =[(\bs y^1)^\top,\ldots,(\bs y^m)^\top]^\top$ and $\Gamma = \oplus_\alpha \Gamma^\alpha$. 
Also, with a given population of each class $N^\alpha = \bs 1_n^\top \bs x^\alpha$, we can write $F(\bs x)$ and $L(\bs x)$ as functions of $\bs y$ alone and it can be seen that $\|\tilde{F}(\bs y)\|$ and $\|\tilde{L}(\bs y)\|$ are positive definite functions of $\bs y$. Rewriting dynamics~\eqref{eq_Model} using transformed variables $\bs p$ and $\bs y$, we get
\begin{subequations} \label{cascaded_trivial}
\begin{align}
    \dot{\bs p} &= (BF^*-D-L^*)\bs p-PBF(\bs y)\bs p + (B\tilde{F}(\bs y)-\tilde{L}(\bs y))\bs p \label{p_trivial_y}\\
    \dot{\bs y} &= \Gamma \bs y. \label{cascaded_trivial_y}
\end{align}
\end{subequations}

Consider the Lyapunov function $ V({\bs p}) = {\bs p}^\top  R {\bs p}$. Then
\begin{align}\label{Vdot_cascaded_trivial}
     \dot{V} & = 2 \bs p^\top R \dot{\bs p} \nonumber \\
            & = \bs p^\top (R(BF^*-D-L^*)+(BF^*-D-L^*)^\top R) \bs p \nonumber \\
            & \quad + 2 \bs p^\top R (B\tilde{F}-\tilde{L})\bs p - 2\bs p^\top R P B (\tilde{F}+F^*) \bs p \nonumber \\
            & = -\bs p^\top K \bs p + 2 \bs p^\top R (B(I-P)\tilde{F}-\tilde{L})\bs p - 2\bs p^\top R P B F^* \bs p \nonumber \\
            & \leq - 2 \bs p^\top R P B F^* \bs p + 2k_1(b\|\tilde{F}\|+\|\tilde{L}\|), \nonumber
            \end{align}
where ${k_1, b >0}$ are appropriate positive constants. Pick $\epsilon \in (0,1)$ and it follows that           
            \begin{align}
          \dot V  & \leq - 2 (1-\epsilon)\bs p^\top R P B F^* \bs p - 2 \epsilon \bs p^\top R P B F^* \bs p \nonumber \\
            & \qquad \quad + 2k_1(b\|\tilde{F}\|+\|\tilde{L}\|) \nonumber \\
            & \leq - 2 (1-\epsilon)\bs p^\top R P B F^* \bs p - k_2\|\bs p\|^3_\infty + 2k_1(b\|\tilde{F}\|+\|\tilde{L}\|) \nonumber\\
            & \leq - 2 (1-\epsilon)\bs p^\top R P B F^* \bs p - k_2\|\bs p\|^3_\infty + \rho_1(\|\bs y\|_\infty) \nonumber\\
            & \leq - 2 (1-\epsilon)\bs p^\top R P B F^* \bs p ,
\end{align}
for $\|\bs p\|_\infty \geq \rho_2(\|\bs y\|_\infty)=: \sqrt[3]{\frac{\rho_1(\|\bs y\|_\infty)}{k_2}}$.
Here, $k_2>0$ is an appropriate constant and $\rho_1$ is a suitable class $\mc K$ function. Since $ 2k_1(b \|\tilde{F}\|+\|\tilde{L}\|)$ is a positive definite function of $\bs y$, existence of $\rho_1$ follows from~\cite[Lemma 4.3]{HKK:02}. $\rho_2$ 
is also a class $\mc K$ function. 
Since \eqref{Vdot_cascaded_trivial} is a continuous negative definite function of $\bs p \in [0,1]^{nm}$, it follows from~\cite[Theorem 4.19]{HKK:02} that dynamics \eqref{p_trivial_y} is ISS (Input to state stable) with respect to input $\bs y$. Note that the origin is globally asymptotically stable for \eqref{cascaded_trivial_y}. Now using~\cite[Lemma 4.7]{HKK:02} for the cascaded system \eqref{p_trivial_y}-\eqref{cascaded_trivial_y}
implies that the origin is globally asymptotically stable equilibrium for the cascaded system.

\noindent\textbf{Stability of endemic equilibria:}
Finally, we prove the fifth statement. To this end, we first establish an intermediate result. 
\begin{lemma} \label{Lemma:p_i tends to 0 implies p tends to 0}
For the dynamics~\eqref{eq_p}, if $p^\alpha_{i}(t) \to 0$ as $t \to \infty$, for some $i \in \until{n}$ and $\alpha \in \until{m}$, then $\bs p(t) \to \bs 0$ as $t \to \infty$.
\end{lemma}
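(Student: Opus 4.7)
The plan is to employ an $\omega$-limit set argument in which strong connectivity propagates the vanishing of one coordinate of $\bs p$ to all coordinates through the strictly positive coefficients in the dynamics.

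First, I would observe that trajectories of~\eqref{eq_Model} remain in the compact set $[0,1]^{nm} \times \prod_{\alpha=1}^m \setdef{\bs \xi \in \real^n_{\ge 0}}{\bs 1_n^\top \bs \xi = N^\alpha}$, so the $\omega$-limit set $\Omega$ of $(\bs p(\cdot), \bs x(\cdot))$ is nonempty, compact, and positively invariant. Under Assumption~\ref{Assumption:StrongConnectivity} each $Q^\alpha$ is irreducible, so the $\bs x$-subsystem~\eqref{eq_x} admits $\bs \pi$ as a globally attracting equilibrium on this invariant manifold. Hence every point of $\Omega$ has the form $(\bar{\bs p}, \bs \pi)$, and the projected set $\Omega_p := \setdef{\bar{\bs p}}{(\bar{\bs p}, \bs \pi) \in \Omega}$ is invariant under the autonomous reduced flow obtained from~\eqref{eq_p} by freezing $\bs x = \bs \pi$. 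Since $p^\alpha_i(t) \to 0$ by hypothesis, every $\bar{\bs p} \in \Omega_p$ satisfies $\bar p^\alpha_i = 0$.

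Next, I would fix any $\bar{\bs p} \in \Omega_p$ and consider the trajectory $\bs p(s; \bar{\bs p})$ of the reduced flow. By invariance, $p^\alpha_i(s;\bar{\bs p}) \equiv 0$ for all $s \ge 0$, and therefore $\dot p^\alpha_i \equiv 0$. Substituting into~\eqref{eq_SIS_pi} with $\bs x = \bs \pi$ and $p^\alpha_i = 0$ collapses the recovery and diagonal-dispersal terms, leaving
\[
0 \;=\; \beta^\alpha_i \sum_{\gamma=1}^{m} f^\gamma_i(\bs \pi)\, p^\gamma_i(s) \;+\; \sum_{j\,:\,(j,i)\in \mc E^\alpha} q^\alpha_{ji}\, \tfrac{\pi^\alpha_j}{\pi^\alpha_i}\, p^\alpha_j(s)
\]
for all $s \ge 0$. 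Each summand is non-negative and every coefficient is strictly positive: $\beta^\alpha_i > 0$ by assumption, $f^\gamma_i(\bs \pi) > 0$ because irreducibility of $Q^\gamma$ forces $\bs \pi^\gamma \gg \bs 0$, and $q^\alpha_{ji} > 0$ precisely for in-neighbors $j$ of $i$ in $\mc G^\alpha$. Hence $p^\gamma_i(s) \equiv 0$ for every class $\gamma$, and $p^\alpha_j(s) \equiv 0$ for every in-neighbor $j$ of $i$ in $\mc G^\alpha$.

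Finally, I would iterate this two-step propagation. Each newly established pair $(j,\alpha)$ with $p^\alpha_j \equiv 0$ on $\Omega_p$ feeds into the same argument, extending the vanishing to every class at node $j$ and to every in-neighbor of $j$ in $\mc G^\alpha$. Strong connectivity of $\mc G^\alpha$ (Assumption~\ref{Assumption:StrongConnectivity}) ensures that chasing in-edges from $i$ eventually reaches every node of layer $\alpha$, and the cross-class step at each visited node then sweeps the remaining classes. Consequently $\bs p \equiv \bs 0$ on $\Omega_p$, so $\Omega_p = \{\bs 0_{nm}\}$ and $\bs p(t) \to \bs 0_{nm}$. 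The main subtlety I anticipate is justifying the reduction to the autonomous flow on $\Omega_p$ with $\bs x$ frozen at $\bs \pi$; once that is in place, the remainder is a direct consequence of strict positivity of the parameters combined with strong connectivity.
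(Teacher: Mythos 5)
Your proof is correct, but it takes a genuinely different route from the paper's. The paper argues directly along the original, time-varying trajectory: from \eqref{eq_SIS_pi} it observes that $\ddot p^\alpha_i$ is bounded, so $\dot p^\alpha_i$ is uniformly continuous, and Barbalat's lemma then yields $\dot p^\alpha_i \to 0$; since the incoming terms $\beta_i^\alpha p^{\mathrm{avg}}_i(1-p^\alpha_i)$ and $-l^\alpha_{ij}(\bs x)\,p^\alpha_j$ are non-negative, they must individually vanish in the limit, which propagates the decay across classes at node $i$ and to in-neighbors of $i$ in layer $\alpha$, and strong connectivity completes the sweep. You instead pass to the $\omega$-limit set of the joint autonomous system, use $\bs x(t)\to\bs\pi$ to reduce to the frozen flow with $\bs x\equiv\bs\pi$, and obtain exact identities ($\dot p^\alpha_i\equiv 0$ on $\Omega_p$) rather than asymptotic ones; the same positivity-plus-strong-connectivity propagation then forces $\Omega_p=\{\bs 0_{nm}\}$. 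Your approach buys cleaner bookkeeping (no uniform-continuity/Barbalat step, exact equalities instead of limits of sums of non-negative terms), at the price of the limit-set machinery and the technicality you yourself flag: the reduction requires the tail of the trajectory to be precompact in the region where $L(\bs x)$ is defined, i.e., with the $x^\alpha_i$ bounded away from zero (your stated compact set includes points with $x^\alpha_i=0$, where the vector field is undefined), which does hold here because $\bs x(t)\to\bs\pi\gg\bs 0$ under Assumption~\ref{Assumption:StrongConnectivity}. The paper's Barbalat route sidesteps this entirely by never freezing $\bs x$, using only $-l^\alpha_{ij}(\bs x(t))\ge 0$ along the actual trajectory.
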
 
\begin{proof}
It can be easily seen from \eqref{eq_SIS_pi} that $\ddot{p}^\alpha_{i}$ is bounded and hence $\dot{p}^\alpha_{i}$ is uniformly continuous in $t$. Now if $p^\alpha_{i}(t) \to 0$ as $t \to \infty$, it follows from Barbalat's lemma \cite[Lemma 4.2]{slotine1991applied} that $\dot{p}^\alpha_{i} \to 0$. Therefore, from \eqref{eq_SIS_pi} and the fact that $- l^\alpha_{i j}(\bs x) \geq 0$ and $p^\alpha_{i} \geq 0$, it follows that $p^\alpha_{j}(t) \to 0$ for all $j$ such that $- l^\alpha_{i j} (\bs x) \neq 0$. Using Assumption~\ref{Assumption:StrongConnectivity} and applying the above argument for each class at each node implies
$\bs p(t) \to \bs 0$. 
\end{proof}
Define $\tilde{\bs p} := \bs p-\bs p^*$, $P^* := \operatorname{diag}(\bs p^*)$ and $\tilde{P} := \operatorname{diag}(\tilde{\bs p})$. Then
\begin{equation*}
    \begin{split}
        \dot{\tilde{\bs p}} & =  (BF-D-L- P BF) \bs p \\
                        & =  (BF^*-D-L^*- P^* BF^*) \bs p^*  \\  &  \quad +  (BF^*-D-L^*- P^* BF^*) \tilde{\bs p} \\
                        & \quad + (B\tilde{F}- \tilde{L}) \bs p - PB\tilde{F} \bs p - \tilde{P}BF^* \bs p \\
                        & = ((I- P^*)BF^*-D-L^*) \tilde{\bs p} + ((I-P)B\tilde{F} - \tilde{L}) \bs p \\ 
                        &\quad   - \tilde{P}BF^* \bs p,
    \end{split}
\end{equation*}
where $(BF^*-D-L^*- P^* BF^*) \bs p^* = \bs 0$, as ($\bs p^*$, $\bs x^*$) is an equilibrium point.

Note that $(BF^*-D-L^*- P^* BF^*)=((I-P^*)BF^*-D-L^*)$ is an irreducible Metzler matrix and $\bs p^* \gg \bs 0$ is an eigenvector associated with eigenvalue at zero. Therefore, the Perron-Frobenius theorem for irreducible Metzler matrices \cite{Bullo-book_Networks} implies 
$\mu ((I- P^*)BF^*-D-L^*) = 0$. Also, this means there exists a positive-diagonal matrix $R_2$ and a positive semi-definite matrix $K_2$ such that
\begin{multline*}
   R_{2}((I-P^*)BF^*-D-L^*) \\
   +((I-P^*)BF^*-D-L^*)^\top R_{2} = -K_2 .
\end{multline*}
Consider the cascaded system
\begin{subequations} \label{cascaded_nontrivial}
\begin{align}
     \dot{\tilde{\bs p}} &= ((I- P^*)BF^*-D-L^*) \tilde{\bs p} + ((I-P)B\tilde{F}(\bs y) \nonumber \\ 
       & \qquad \quad  - \tilde{L}(\bs y)) \bs p 
  - \tilde{P}BF^* \bs p \\
    \dot{\bs y} &= \Gamma \bs{y}. \label{cascaded_nontrivial_y}
\end{align}
\end{subequations}

Similar to the proof of the fourth statement, take $V_{2}(\tilde{\bs p}) = \tilde{\bs p}^\top R_{2} \tilde{\bs p}$.
Then,
\begin{align*}
           \dot{V_{2}} & = 2 \tilde{\bs p}^\top R_{2} \dot{\tilde{\bs p}}  \\
            & = \tilde{\bs p}^\top (R_{2}((I-P^*)BF^*-D-L^*)\\
            &\quad +((I-P^*)BF^*-D-L^*)^\top R_{2}) \tilde{\bs p} \\
            & \quad + 2 \tilde{\bs p}^\top R_{2}((I-P)B\tilde{F}-\tilde{L})\bs p - 2 \tilde{\bs p}^\top R_{2} \tilde{P} B F^*\bs p \\
            & = -\tilde{\bs p}^\top K_{2} \tilde{\bs p} + 2 \tilde{\bs p}^\top R_{2}((I-P)B\tilde{F}-\tilde{L})\bs p \\
            & \quad - 2 \tilde{\bs p}^\top R_{2} \tilde{P} B F^*\bs p \\
            & \leq - 2 \tilde{\bs p}^\top R_{2} \tilde{P} BF^* \bs p + k_2 (b\|\tilde{F}\| + \|\tilde{L}\|),\; {k_2, b >0}
\end{align*}   
where ${k_2, b >0}$ are appropriate positive constants. Pick $\epsilon \in (0,1)$ and it follows that  
\begin{align}  
    \dot{V_{2}}           & \leq - 2(1-\epsilon)\tilde{\bs p}^\top R_{2} \tilde{P} BF^* \bs p + 2\epsilon\tilde{\bs p}^\top R_{2} \tilde{P} BF^* \bs p +\rho_4(\|\bs y\|) \nonumber\\
            & \leq - 2(1-\epsilon)\tilde{\bs p}^\top R_{2} \tilde{P} BF^* \bs p - \rho_3(\| \tilde{\bs p}\|) + \rho_4(\|\bs y\|) \nonumber \\
            & \leq - 2(1-\epsilon)\tilde{\bs p}^\top R_{2} \tilde{P} BF^* \bs p, \label{Vdot_cascaded_non-trivial}   
\end{align}
for ${\| \tilde{\bs p}\|\geq \rho_5(\|\bs y\|)}=: \rho_3^{-1}\rho_4(\|\bs y\|)$.
Here, $\rho_4$ and $\rho_5$ are suitable class $\mc K$ functions and $\rho_3$ a class $\mc K_\infty$ function. The existence of these functions follows from~\cite[Lemma 4.3]{HKK:02}. The remainder of the proof follows analogously to the proof of the fourth statement. 
\end{proof}

\begin{remark}
It should be noted that dynamics~\eqref{eq_Model} has an equilibrium point on the boundary of its domain  and the standard Lyapunov analysis assumes that the equilibrium point lies in the interior of the domain. However, since we have shown that $[0,1]^{nm}$ is the invariant set for the $\bs p$ dynamics, similar conclusions can be drawn using LaSalle's invariance principle~\cite{HKK:02}. Similarly, the input-to-state stability~\cite[Lemma 4.7)]{HKK:02} cannot be directly used. However, its proof relies on \cite[Theorem 4.18 and 4.19]{HKK:02}, which can be extended to our positive system by constructing invariant sets that are the intersection of $[0,1]^{nm}$ and the invariant sets in the proof of~\cite[Theorem 4.18 and 4.19]{HKK:02}. 
\end{remark}

\begin{corollary}[\bit{Stability of disease-free equilibria}] \label{cor:dis-free}
For the SIS epidemic model with multi-layer population dispersal~\eqref{eq_Model} under Assumption~\ref{Assumption:StrongConnectivity} and the disease-free equilibrium $(\bs p^*, \bs x^*)= (\bs 0, \bs \pi)$ the following statements hold
\begin{enumerate}
    \item a necessary condition for stability is that for each $i \in \until{n}$, $\exists \alpha \in \until{m}$ such that \vs{$\delta_{i}^\alpha > \beta_{i}^\alpha - \nu^\alpha_{i}$}; 
    \item  a necessary condition for stability is that there exists some $i \in \until{n}$ \vs{and $\alpha \in \until{m}$ such that $\delta_i^\alpha \geq \beta_i^\alpha$}; 
    \item a sufficient condition for stability is \vs{$\delta_{i}^\alpha \geq \beta_{i}^\alpha$, for each $i \in \until{n}$ and $\alpha \in \until{m}$}; 
    \item a sufficient condition for stability is 
    \[
    \frac{\lambda_{2}}{\Big(1+\sqrt{1+\frac{\lambda_{2}}{\sum_{i} w_{i}\big(\delta_{i}-\beta_{i}-s\big)}}\Big)^2 nm + 1} + s \geq 0,
    \]
    where 
    $\bs w$ is a positive left eigenvector of $(BM+L^*)$ such that $\bs w^\top(BM+ L^*) = 0$ with $\max_{i} w_{i} = 1$, $M= I-F^*$, \vs{$s = \min_{i, \alpha} (\delta_{i}^\alpha-\beta_{i}^\alpha)$}, $W = \operatorname{diag} (\bs w)$, and  $\lambda_{2}$ is the second smallest eigenvalue of $\frac{1}{2}\big(W(BM+ L^*) +(BM+ L^*)^\top W\big)$.
\end{enumerate}
\end{corollary}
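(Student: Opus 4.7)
The common tool for all four statements is Theorem~\ref{thm:stability}(iv), which reduces the question to determining when $\mu(M_0) \leq 0$, where $M_0 := BF^* - D - L^*$. Two structural facts about $M_0$ drive the analysis. First, $M_0$ is an irreducible Metzler matrix: the non-negative off-diagonals of $-L^*$ encode the strongly connected dispersal graphs $\mc G^\alpha$ within each layer (Assumption~\ref{Assumption:StrongConnectivity}), while $BF^*$ contributes strictly positive cross-layer entries $\beta_i^\alpha f_i^\beta$ at each common node. Second, the row sum of $L^*$ at $(\alpha,i)$ vanishes, because the global balance $(Q^\alpha)^\top \bs \pi^\alpha = \bs 0$ gives $l_{ii}^\alpha(\bs \pi) = \nu_i^\alpha$; combined with $F^* \bs 1 = \bs 1$, the row sum of $M_0$ at $(\alpha, i)$ equals $\beta_i^\alpha - \delta_i^\alpha$.

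For (ii) and (iii), I would invoke the Perron--Frobenius sandwich for irreducible Metzler matrices, $\min \mathrm{rowsum}(M_0) \leq \mu(M_0) \leq \max \mathrm{rowsum}(M_0)$. The upper bound proves (iii): if $\delta_i^\alpha \geq \beta_i^\alpha$ for all $(i,\alpha)$, then $M_0 \bs 1 \leq \bs 0$ exhibits $\bs 1$ as a positive subinvariant vector. The lower bound proves (ii). For (i), I would fix a node $i$ and consider the $m \times m$ principal submatrix $M_0^{(i)}$ with rows and columns $\{(\alpha,i) : \alpha \in \until{m}\}$; this submatrix is irreducible Metzler (its off-diagonals $\beta_i^\alpha f_i^\beta$ are strictly positive) with row sums $\beta_i^\alpha - \delta_i^\alpha - \nu_i^\alpha$. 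Because $M_0^{(i)}$ is a strict principal submatrix of the irreducible Metzler matrix $M_0$, spectral-abscissa strict monotonicity gives $\mu(M_0^{(i)}) < \mu(M_0) \leq 0$, and the min-row-sum bound applied to $M_0^{(i)}$ then yields $\min_\alpha (\beta_i^\alpha - \delta_i^\alpha - \nu_i^\alpha) < 0$, exactly the strict inequality claimed in (i).

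Part (iv) is the main obstacle, and I would attack it by a weighted Lyapunov argument. Set $A := BM + L^* = B - BF^* + L^*$, so that $M_0 = -A + (B-D)$. One checks $A \bs 1 = \bs 0$ (since $F^* \bs 1 = \bs 1$ and $L^* \bs 1 = \bs 0$) and, by hypothesis, $\bs w^\top A = 0$, with $-A$ irreducible Metzler and $\mu(-A) = 0$. Using both eigenvector identities, a direct rewriting establishes the weighted Laplacian identity
\[
\bs y^\top (WA + A^\top W)\, \bs y \;=\; \sum_{i<j}\, |w_i A_{ij} + w_j A_{ji}|\,(y_i - y_j)^2,
\]
so $S := \frac{1}{2}(WA + A^\top W)$ is positive semidefinite with kernel exactly $\mathrm{span}(\bs 1)$; thus $0 = \lambda_1 < \lambda_2 \leq \cdots$ and $\bs y'^\top S \bs y' \geq \lambda_2 \|\bs y'\|^2$ whenever $\bs 1^\top \bs y' = 0$. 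The Lyapunov function $V(\bs p) = \bs p^\top W \bs p$ yields $\mu(M_0) \leq 0$ as soon as $W M_0 + M_0^\top W \preceq 0$, and after splitting $W(B-D) = -sW - W(D - B - sI)$ with $D - B - sI \geq 0$ diagonal by the definition of $s$, the Lyapunov inequality reduces to $S + sW + W(D - B - sI) \succeq 0$.

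The delicate step is to certify this PSD condition using only the scalar parameters $\lambda_2, \gamma, s, nm$ appearing in (iv). I would decompose any test vector as $\bs y = \bar y \bs 1 + \bs y'$ with $\bs 1^\top \bs y' = 0$, identify the pure-$\bar y$ contribution $\bar y^2 \gamma$ of $W(D-B-sI)$ (where $\gamma = \sum_i w_i(\delta_i - \beta_i - s)$), use the spectral-gap bound $\bs y'^\top S \bs y' \geq \lambda_2 \|\bs y'\|^2$ on the orthogonal component, and bound the remaining $\bar y$--$\bs y'$ cross terms via Cauchy--Schwarz, exploiting the normalization $\max_i w_i = 1$ and $\|\bs 1\|^2 = nm$. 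The result is a quadratic form in $(\bar y, \|\bs y'\|)$; minimizing over $\bar y$ for fixed $\|\bs y'\|$ produces a scalar inequality whose tight form is exactly the threshold in (iv), with the factor $(1 + \sqrt{1 + \lambda_2/\gamma})^2\, nm + 1$ emerging from completing the square in this one-dimensional optimization. Making the cross-term estimate sharp enough to match the stated threshold, rather than a looser multiple of it, is the technical heart of the argument.
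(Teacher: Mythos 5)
Your treatment of statements (i)--(iii) is correct, though routed differently from the paper. You exploit that $L^*\bs 1=\bs 0$ and $F^*\bs 1=\bs 1$, so the row sums of the irreducible Metzler matrix $M_0=BF^*-D-L^*$ are $\beta_i^\alpha-\delta_i^\alpha$, and then use the row-sum sandwich for the spectral abscissa for (ii)--(iii) and, for (i), strict monotonicity of the spectral abscissa under passage to the proper principal submatrix indexed by a fixed node $i$ (whose row sums are $\beta_i^\alpha-\delta_i^\alpha-\nu_i^\alpha$). The paper instead evaluates the Perron eigenvector equation componentwise at a layer minimizing $z_i^\alpha$ (for (i) and (ii)) and uses Gershgorin applied to $B-D-(BM+L^*)$ for (iii). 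Both are elementary Perron--Frobenius arguments of comparable strength; your principal-submatrix argument for (i) is a clean alternative to the paper's componentwise manipulation, and it correctly delivers the strict inequality via strict monotonicity even though stability only gives $\mu(M_0)\le 0$.

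The genuine gap is in (iv). The paper does not prove the underlying spectral bound at all: it cites \cite[Theorem 6]{wu2005bounds} for perturbed irreducible Laplacian matrices, applies it with $H=(BM+L^*)+\Delta$, $\Delta=D-B-sI\ge 0$, and adds $s$. Your plan is to re-derive that bound from scratch: the reduction to $S+W\Delta\succeq -sW$ with $S=\tfrac12(W(BM+L^*)+(BM+L^*)^\top W)$, the Laplacian quadratic-form identity, the splitting $\bs y=\bar y\bs 1+\bs y'$, the spectral-gap estimate $\bs y'^\top S\bs y'\ge\lambda_2\|\bs y'\|^2$, and Cauchy--Schwarz on the cross terms are all sound set-up, but you stop exactly at the step that constitutes the cited theorem: the sharp cross-term estimate and the one-dimensional optimization that produce the specific constant $\big(1+\sqrt{1+\lambda_2/\gamma}\big)^2 nm+1$, with $\gamma=\sum_i w_i(\delta_i-\beta_i-s)$, are asserted to ``emerge from completing the square'' but never carried out, and you yourself flag them as the technical heart. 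As written, statement (iv) is therefore not proved. There is also a latent hazard in your route: your certificate bounds the symmetrized generalized Rayleigh quotient $\min_{\bs y}\bs y^\top(S+W\Delta)\bs y/\bs y^\top W\bs y$, which lower-bounds $\mathrm{Re}(\lambda(H))$ but is in general a weaker quantity than the eigenvalue bound Wu states, so it is not automatic that the exact threshold in (iv) is achievable by this relaxation unless your estimates in fact reproduce Wu's argument. The efficient completion is the paper's: verify that $BM+L^*$ is an irreducible Laplacian (zero row sums, nonpositive off-diagonals, irreducibility from Assumption~\ref{Assumption:StrongConnectivity} and $f_i^\alpha>0$) and $\Delta=D-B-sI$ is a nonnegative diagonal matrix, then invoke the cited bound and conclude via $\mu(BF^*-D-L^*)\le 0$ from Theorem~\ref{thm:stability}.
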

\begin{proof}
We begin by proving the first two statements. First, we note that $(L^\alpha)^*_{ii} = \nu^\alpha_i$. This can be verified by evaluating $L^*=L(\bs \pi)$ and utilizing the fact that $Q^\top \bs \pi = \bs 0$. The necessary and sufficient condition for the stability of disease-free equilibrium is $\mu (BF^*-D-L^*) \leq 0$. Note that $BF^*-D-L^*$ is an irreducible Metzler matrix. Perron-Frobenius theorem for irreducible Metzler matrices implies that there exists a real eigenvalue equal to $\mu$ with a positive eigenvector, i.e.,
$(BF^*-D-L^*)\bs z = \mu \bs z $, where $\bs z \gg \bs 0 $. Rename components of $\bs z$ as $z_{(n(\alpha-1)+i)}=z^\alpha_i$, $\alpha \in \until{m}$, $i\in \until{n}$, to write $\bs z = [(\bs z^1)^\top,\dots,(\bs z^m)^\top]^\top$. Let for each $i \in \until{n}$, $z^{\kappa_i}_i=\min \{z^1_i,\dots,z^m_i\}$. Since $\mu \leq 0$, written component-wise for $(n(\kappa_i-1)+i)$-th component of $(BF^*-D-L^*)\bs z$:
\vs{\begin{align*}
        & \sum_\alpha \beta_{i}^{\kappa_i}f^{*\alpha}_i z^\alpha_i - (\delta_{i}^{\kappa_i} +\nu^{\kappa_i}_{i})z^{\kappa_i}_{i} - \displaystyle\sum_{j\neq i} l^{*\kappa_i}_{i j}z^{\kappa_i}_j \leq 0 \nonumber  \\
        & \implies (\beta_{i}^{\kappa_i} - \delta_{i}^{\kappa_i}-\nu^{\kappa_i}_i)z^{\kappa_i}_{i} \leq -\sum_\alpha \beta_{i}^{\kappa_i}f^{*\alpha}_i (z^\alpha_i-z^{\kappa_i}_i) \nonumber\\
      & \qquad \qquad \qquad   \qquad \qquad \qquad  \qquad \qquad + \displaystyle\sum_{j\neq i} l^{*\kappa_i}_{i j}z^{\kappa_i}_j  \nonumber \\
        & \implies \beta_{i}^{\kappa_i} - \delta_{i}^{\kappa_i}-\nu^{\kappa_i}_i < 0 .
    \end{align*}}
Here, we have used facts: $\sum_\alpha f^{*\alpha}_i = 1$, $f^{*\alpha}_i>0 $, $ l^{*\kappa_i}_{ij}\leq 0$ and that there exists $j \in \until{n}$ such that $ l^{*\kappa_i}_{ij}<0$. This proves the first statement. 

The second statement follows similarly to the first statement by selecting $z_i^{\kappa_i} =\min\{z^1_1,\dots,z^m_n\}$ and using the additional fact $\nu^{\kappa_i}_i+ \sum_{j\neq i} l^{*\kappa_i}_{i j}=0$. 



Let $F^*=I-M$ where $M$ is the Laplacian matrix which can be seen from the definition of $F$. Now $BF^*-D-L^* = B-D-(BM+L^*)$. Since $(BM+L^*)$ is a Laplacian matrix, if \vs{$\delta_i^\alpha \geq \beta_i^\alpha$, for each $i \in \until{n}$ and $\alpha \in \until{m}$}, from Gershgorin disks theorem \cite{Bullo-book_Networks}, $\mu \leq 0$, which proves the third statement.

For the last statement, we use an eigenvalue bound for perturbed irreducible Laplacian matrix of a digraph~ \cite[Theorem 6]{wu2005bounds}, stated below:

Let $H = A + \Delta$, where $A$ is an $n\times n$ irreducible Laplacian matrix and $\Delta \neq 0$ is a non-negative diagonal matrix. Then  
\begin{equation*}
\begin{split}
  \mathrm{Re}(\lambda(H)) \geq \frac{\lambda_{2}}{\Big(1+\sqrt{1+\frac{\lambda_{2}}{\sum_{i} w_{i}\Delta_i}}\Big)^2 n + 1} > 0,
  \end{split}
\end{equation*}
where, $\bs w$ is a positive left eigenvector of $A$ such that $\bs w^\top A = 0$ with $\max_{i} w_{i} = 1$, $W = \operatorname{diag} (\bs w)$, and  $\lambda_{2}$ is the second smallest eigenvalue of $\frac{1}{2}(W A + A^\top W)$.\\
Now, in our case, necessary and sufficient condition for the stability of disease-free equilibrium is:
\begin{equation*}
\begin{split}
  \mathrm{Re}(\lambda(BM+L^*+D-B)) & = \mathrm{Re}(\lambda(BM+L^*+\Delta + sI)) \\
  & = \mathrm{Re}(\lambda(BM+L^*+\Delta)) + s \\
  & \geq 0,
  \end{split}
\end{equation*}
where, \vs{$s = \min_{i, \alpha} (\delta_{i}^\alpha -\beta_{i}^\alpha)$} and $\Delta=D-B-sI$. Applying the eigenvalue bound with $H=BM+L^*+\Delta$ gives sufficient condition (iv). 
\end{proof}

\medskip
\begin{remark}
For a given multi-layer graph and the associated dispersal transition rates in dynamics~\eqref{eq_Model}, let \vs{$s = \operatorname{min}_{i, \alpha} (\delta_{i}^\alpha-\beta_{i}^\alpha)$ and $(i^*, \alpha^*)= \argmin_{i, \alpha} (\delta_{i}^\alpha-\beta_{i}^\alpha)$. Then, there exist $\delta_i^\alpha$'s, $(i, \alpha)\neq (i^*,\alpha^*)$}, that satisfy statement (iv) of Corollary \ref{cor:dis-free} if $s > \subscr{s}{lower}$, where 
\[
\subscr{s}{lower}=-\frac{\lambda_2}{4mn+1}.
\] \oprocend
\end{remark}

\begin{remark}(\bit{Influence of population dispersal on the stability of disease-free equilibrium.})
The statement (iv) of Corollary~\ref{cor:dis-free} characterizes the influence of population dispersal on the stability of disease-free equilibria. In particular, $\lambda_2$ is a measure of the ``intensity" of population dispersal and $s$ is a measure of the largest deficit in the recovery rate compared with the infection rate among nodes. The sufficient condition in statement (iv) states explicitly how population dispersal can allow for stability of disease-free equilibrium even under a deficit in recovery rate at some nodes. \oprocend
\end{remark}

\section{SIS model under multi-layer population dispersal: Non-strongly connected layers} \label{sec: analysis-non-strong-connected}
In this section, we relax Assumption  \ref{Assumption:StrongConnectivity}, so that the digraph representing population dispersal on a layer need not be strongly connected. We first derive a reduced model and then present a stability analysis.  

\subsection{Reduced model for non-strongly connected layers}
As discussed in~\cite[Chapter 10]{Bullo-book_Networks}, a CTMC is a closed dynamical flow system and accordingly every individual of each class $\alpha$ eventually resides within the sinks of $\mc G^\alpha$~\cite[Theorem 10.13]{Bullo-book_Networks}. Let $\mc V^\alpha = \{\bar v_1^\alpha, \ldots, \bar v_{n^\alpha}^\alpha\} \subset \until{n}$ be the nodes in the sinks of $\mc G^\alpha$ and let 
$\bar{\mc G}^\alpha$ be the subgraph of $\mc G^\alpha$ induced by these nodes. Let $\mc V \setminus \mc V^\alpha = \{\hat v_1^\alpha, \ldots, \hat v_{(n-n^\alpha)}^\alpha\}$.  


 Let $\bar p_i^\alpha(t) = p^\alpha_{\bar v_i^\alpha}(t)$, $\bar x_i^\alpha(t) = x^\alpha_{\bar v_i^\alpha}(t)$, $\hat p_i^\alpha(t) = p^\alpha_{\hat v_i^\alpha}(t)$, and $\hat x_i^\alpha(t) = x^\alpha_{\hat v_i^\alpha}(t)$. Define 
\begin{align*}
\bar{\bs p}^\alpha &= \begin{bmatrix}
\bar p^\alpha_{1} \\ \vdots \\ \bar p^\alpha_{n^\alpha}
\end{bmatrix}, \; 
\bar{\bs x}^\alpha =\begin{bmatrix}
\bar x^\alpha_{1} \\ \vdots \\ \bar x^\alpha_{n^\alpha}
\end{bmatrix}, \;
\bar{\bs p} = \begin{bmatrix}
\bar{\bs p}^1 \\ \vdots \\  \bar{\bs p}^m
\end{bmatrix}, \; 
 \bar{\bs x} = \begin{bmatrix}
\bar{\bs x}^1\\ \vdots\\  \bar{\bs x}^m
\end{bmatrix}, \\
\hat{\bs p}^\alpha &= \begin{bmatrix}
\hat p^\alpha_{1} \\ \vdots \\ \hat p^\alpha_{n- n^\alpha}
\end{bmatrix}, \; 
\hat{\bs x}^\alpha =\begin{bmatrix}
\hat x^\alpha_{1} \\ \vdots \\ \hat x^\alpha_{n-n^\alpha}
\end{bmatrix}, \; 
\hat{\bs p} = \begin{bmatrix}
\hat{\bs p}^1 \\ \vdots \\  \hat{\bs p}^m
\end{bmatrix}, 
\hat{\bs x} = \begin{bmatrix}
\hat{\bs x}^1\\ \vdots\\  \hat{\bs x}^m
\end{bmatrix}. 
\end{align*}

Let $\bar L^\alpha (\bs x) \in \real^{n^\alpha \times n^\alpha}$ be the matrix with entries
	\[
	(\bar l^\alpha)_{ij}(\bs x) = \begin{cases} \sum_{h \in \mc V, h\neq \bar v_i^\alpha}q^\alpha_{h {\bar v_i^\alpha}} \frac{x^\alpha_{h}}{\bar x^\alpha_{i}}, & \text{if } \bar v_i^\alpha = \bar v_j^\alpha, \\
	-q^\alpha_{{\bar v_j^\alpha} {\bar v_i^\alpha}} \frac{\bar x^\alpha_{j}}{\bar x^\alpha_{i}}, & \text{otherwise},
	\end{cases}
	\]
for each $\bar v_i, \bar v_j \in \mc V^\alpha$. Likewise, let ${\hat L}^\alpha (\bs x) \in \real^{n^\alpha \times (n- n^\alpha)}$ be the matrix with entries 
	\[
	(\hat l^\alpha)_{ij}(\bs x) = 
	-q^\alpha_{\hat v_j^\alpha \bar v_i^\alpha} \frac{\hat x^\alpha_{j}}{\bar x^\alpha_{i}}, \text{ for } \bar v_i^\alpha \in \mc V^\alpha \text{ and  } \hat v_j^\alpha \in \mc V \setminus \mc V^\alpha. 
	\]
Let $\bar L = \oplus_{\alpha =1}^m \bar L^\alpha$ and $\hat L = \oplus_{\alpha =1}^m \hat L^\alpha$, where $\oplus$ denotes matrix direct sum. Let $\bar n = \sum_{\alpha =1}^m n^\alpha$. 
Let $\bar F$ be a block matrix with blocks $\bar F^{\alpha \sigma} \in \real^{n_\alpha \times n_\sigma}$, $\alpha, \sigma \in \until{m}$. Define the entries of $\bar F^{\alpha \sigma}$ by
\[
\bar f_{ij}^{\alpha \sigma} = \begin{cases}
\frac{\bar x^\sigma_j}{\sum_{\alpha=1}^m x_{\bar v_j^\sigma}^\alpha}, & \text{ if } \bar v_i^\alpha = \bar v_j^\sigma , \\
0, & \text{ otherwise.}
\end{cases}
\]
Likewise, let $\hat F$ be a block matrix with blocks $\hat F^{\alpha \sigma} \in \real^{n_\alpha \times (n-n_\sigma)}$, $\alpha, \sigma \in \until{m}$. Define the entries of $\hat F^{\alpha \sigma}$ by
\[
\hat f_{ij}^{\alpha \sigma} = \begin{cases}
\frac{\hat x^\sigma_j}{\sum_{\alpha=1}^m x_{\hat v_j^\sigma}^\alpha}, & \text{ if } \hat v_j^\sigma = \bar v_i^\alpha, \\
0, & \text{ otherwise.}
\end{cases}
\]
Let \vs{$\bar D^\alpha = \operatorname{diag}(\delta_{\bar v_1^\alpha}^\alpha, \ldots,  \delta_{\bar v_{n^\alpha}^\alpha}^\alpha )$, $\bar B^\alpha = \operatorname{diag}(\beta_{\bar v_1^\alpha}^\alpha, \ldots,  \beta_{\bar v_{n^\alpha}^\alpha}^\alpha )$}, $\bar D = \oplus_{\alpha=1}^m \bar D^\alpha$ and $\bar B = \oplus_{\alpha=1}^m \bar B^\alpha$. Let $\bar P = \operatorname{diag}(\bar{\bs p})$ and $\hat P = \operatorname{diag}(\hat{\bs p})$.


\begin{corollary}[\bit{SIS model under multilayer population dispersal: Non-strongly connected layers}]\label{corr:reduced-model}
Consider the multi-layer population dispersal model defined by a CTMC for each layer $\alpha \in \until{m}$ with generator matrices $Q^\alpha$. Let $\mc V^\alpha$ be the set of sink nodes for the CTMC associated with layer $\alpha$. Then,  the dynamics of the fractions of the infected sub-population restricted to sink nodes at each layer $\bar{\bs p}$ and the number of individuals $\bs x^\alpha$ are
	\begin{subequations} \label{eq_Model_red}
		\begin{align}
		\dot{\bar{\bs{p}}} & = (\bar B \bar F(\bs{x})-\bar D- \bar L(\bs{x}))\bar{\bs{p}} - \bar P \bar B \bar F(\bs{x}) \bar{\bs{p}} \nonumber \\
		& \qquad + (I -\bar P) \bar B  \hat F(\bs x) \hat{\bs p} - \hat L (\bar{\bs x}) \hat{\bs p} 
		\label{eq_p_bar}\\ 
		\dot{\bs{x}}^\alpha & = (Q^\alpha)^\top \bs{x}^\alpha. \label{eq_x_bar}
		\end{align}
	\end{subequations}
\end{corollary}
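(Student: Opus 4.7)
The plan is to establish \eqref{eq_Model_red} by directly restricting the dynamics of Proposition~\ref{prop:model} to sink-node components and verifying that the resulting expression matches the claim. The key structural fact I would invoke is that each sink of $\mc G^\alpha$ has no outgoing edges to non-sinks, so $q^\alpha_{\bar v_i^\alpha \hat v_j^\alpha} = 0$ for every $\bar v_i^\alpha \in \mc V^\alpha$ and $\hat v_j^\alpha \in \mc V \setminus \mc V^\alpha$.

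First, I would partition $L^\alpha(\bs x)$ according to sinks versus non-sinks (after relabeling so sinks appear first). Because outflows from sinks to non-sinks vanish, the sink rows of $L^\alpha(\bs x)$ take the block form $[\bar L^\alpha(\bs x) \mid \hat L^\alpha(\bs x)]$, where $\bar L^\alpha$ collects the inflows among sinks of layer $\alpha$ and $\hat L^\alpha$ collects the inflows into sinks from non-sinks. The diagonal entry $\bar l^\alpha_{ii}$ aggregates the total inflow rate into $\bar v_i^\alpha$ from every other node of $\mc V$. Comparing these formulas with the definition of $l^\alpha_{ij}$ in Proposition~\ref{prop:model} shows that they match exactly the $\bar L^\alpha$ and $\hat L^\alpha$ given in the statement of the corollary.

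Second, I would expand the term $\subscr{F}{blk}(\bs x)\bs p$ evaluated at a sink node $\bar v_i^\alpha$. By definition it equals $\sum_\sigma f^\sigma_{\bar v_i^\alpha}(\bs x)\, p^\sigma_{\bar v_i^\alpha}$, and I would split the sum over classes $\sigma$ according to whether $\bar v_i^\alpha$ belongs to $\mc V^\sigma$ or to $\mc V \setminus \mc V^\sigma$. Summands with $\bar v_i^\alpha = \bar v_j^\sigma$ contribute through $\bar p^\sigma_j$ and are collected by $\bar F$, whose defining indicator is exactly this identification; the remaining summands contribute through $\hat p^\sigma_j$ via the indicator $\bar v_i^\alpha = \hat v_j^\sigma$, which is the defining condition of $\hat F$. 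Consequently, the restriction of $\subscr{F}{blk}(\bs x)\bs p$ to sink indices decomposes as $\bar F(\bs x)\bar{\bs p} + \hat F(\bs x)\hat{\bs p}$ in the corresponding block.

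Finally, I would substitute these two decompositions into the scalar equation \eqref{eq_SIS_pi} restricted to sink indices and collect terms in vector form. The product $\beta^\alpha_{\bar v_i^\alpha}(1-\bar p^\alpha_i)\, \supscr{p}{avg}_{\bar v_i^\alpha}$ expands, using the $F$-decomposition, into $\beta^\alpha_{\bar v_i^\alpha}(\bar F\bar{\bs p}+\hat F\hat{\bs p})^\alpha_i - \bar p^\alpha_i \beta^\alpha_{\bar v_i^\alpha}(\bar F\bar{\bs p}+\hat F\hat{\bs p})^\alpha_i$, which regroups into $\bar B \bar F\bar{\bs p} - \bar P\bar B\bar F\bar{\bs p} + (I-\bar P)\bar B \hat F \hat{\bs p}$; combining with $-\bar D\bar{\bs p} - \bar L\bar{\bs p} - \hat L\hat{\bs p}$ arising from the $L$-decomposition yields \eqref{eq_p_bar}, while \eqref{eq_x_bar} is inherited directly from \eqref{eq_x}. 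The main bookkeeping obstacle is tracking the fact that a given physical node may be a sink in one layer and a non-sink in another; the indicator-based definitions of $\bar F$ and $\hat F$ in the statement are designed precisely to absorb these cross-layer identifications, so once the index correspondences are carefully laid out, the derivation is a straightforward reassembly of \eqref{eq_SIS_pi}.
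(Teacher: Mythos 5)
Your proposal is correct and takes essentially the same approach as the paper, whose proof is exactly the one-line observation that one restricts \eqref{eq_Model} to the sink rows and carefully accounts for the contribution of non-sink nodes; your column-wise split of the sink rows of $L^\alpha(\bs x)$ into $[\bar L^\alpha(\bs x) \mid \hat L^\alpha(\bs x)]$ and of $\subscr{F}{blk}(\bs x)\bs p$ into $\bar F(\bs x)\bar{\bs p}+\hat F(\bs x)\hat{\bs p}$ supplies the bookkeeping the paper leaves implicit and is consistent with the worked example in Appendix~\ref{app:low-dim}. One minor remark: that block split needs only the column partition of the inflow-based entries of $L^\alpha$ (its rows contain only inflow rates into the row node), not the fact that sinks have no outgoing edges to non-sinks, so invoking the latter is harmless but not what makes the decomposition work.
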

\begin{proof}
The proof follows by restricting~\eqref{eq_Model} to sinks in each layer and carefully accounting for the contribution of nodes not in the sink component. 
\end{proof}

\vs{An illustration of the reduction in Corollary~\ref{corr:reduced-model} for a small bi-layer graph is presented in Appendix~\ref{app:low-dim}.}

\subsection{Stability Analysis}
Note that at equilibrium every $x_k^\alpha$ for $k \in \mc V \setminus \mc V^\alpha$ is zero and hence, the block diagonal matrix $\bar L(\bs x)$, at equilibrium, comprises strongly connected Laplacian matrices as the diagonal blocks. Let $\bar L^*$ and $\bar F^*$ denote the equilibrium values of $\bar L(\bs x)$ and $\bar F(\bs x)$.
Then, similar to Theorem~\ref{thm:stability}, 
the matrix $(\bar{B}\bar F^*-\bar{D}-\bar L^*)$ determines the existence and nature of equilibrium point. The matrix $(\bar{B}\bar{F}^*-\bar{D}-\bar L^*)$ may comprise multiple diagonal blocks that are irreducible. Consider a scenario with a two-layered graph in which layer 1 has two strongly connected sinks with node sets $\mc V^1_1$ and $\mc V^1_2$, and layer 2 also has two strongly connected sinks with node sets $\mc V^2_1$ and $\mc V^2_2$. Suppose $\mc V^1_2 \intersection \mc V^2_2$ is an empty set, while $\mc V^1_1 \intersection \mc V^2_1$ is non-empty. Then, $(\bar{B}\bar{F}^*-\bar{D}-\bar L^*)$ will have three irreducible blocks associated with $\mc V^1_1 \union \mc V^2_1$, $\mc V^1_2$ and $\mc V^2_2$. Since each of these blocks can be individually analyzed, for simplicity, we assume that   $(\bar{B}\bar{F}^*-\bar{D}-\bar L^*)$ is irreducible. 

\begin{assumption}\label{assump:sink-recovery}
The matrix $(\bar{B}\bar{F}^*-\bar{D}-\bar L^*)$ is irreducible. Additionally, 
for every strongly connected sink component in each layer $\alpha$, there exists a node $k$ such that \vs{$\delta_k^\alpha >0$}. \oprocend 
\end{assumption}



\begin{corollary}[\bit{Stability of equilibria with non-strongly connected layers}] \label{thm:stability-non-strong}
For the reduced SIS model with population dispersal on non-strongly connected multiple layers~\eqref{eq_Model_red}, under Assumption~\ref{assump:sink-recovery}, the following statements hold 
\begin{enumerate}
    \item the disease-free equilibrium $(\bar{\bs p}, \bs x) = (\bs 0_{\bar n}, \bs \pi)$ is globally asymptotically stable if and only if $\mu (\bar B \bar F^*- \bar D- \bar L^*) \leq 0$ and is unstable otherwise;
    \item the endemic equilibrium is almost globally asymptotically stable if $\mu (\bar B \bar F^*- \bar D- \bar L^*) > 0$ with region of attraction $\bar{\bs p}(0) \in [0,1]^{\bar n}$ such that $\bs p(0) \neq \bs 0_{\bar n}$.
    \end{enumerate}

\end{corollary}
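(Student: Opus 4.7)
The plan is to reduce the analysis to the strongly connected setting already handled by Theorem~\ref{thm:stability} by exploiting that, under the CTMC dispersal, the transient (non-sink) populations decay while the sink populations converge to a positive equilibrium. I would first study the autonomous $\bs x$ dynamics~\eqref{eq_x_bar}: by the closed-flow property of CTMCs, $\hat{\bs x}(t) \to \bs 0$ exponentially and $\bar{\bs x}(t) \to \bar{\bs \pi}$. Projecting $\bs x$ onto the orthogonal complement of $\bs \pi$ yields $\dot{\bs y} = \Gamma \bs y$ with $\Gamma$ Hurwitz, exactly as in the proof of Theorem~\ref{thm:stability}. In particular, the perturbations $\tilde{\bar L}(\bs y) := \bar L(\bs x) - \bar L^*$, $\tilde{\bar F}(\bs y) := \bar F(\bs x) - \bar F^*$, and the coupling data $\hat F(\bs x)$, $\hat L(\bar{\bs x})$ all vanish as $\bs y \to \bs 0$, with positive definite bounds in $\bs y$.

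Second, I would rewrite the reduced dynamics~\eqref{eq_p_bar} as a cascade driven by $\bs y$:
\[
\dot{\bar{\bs p}} = (\bar B \bar F^* - \bar D - \bar L^*)\bar{\bs p} - \bar P \bar B \bar F^* \bar{\bs p} + \Phi(\bs y,\bar{\bs p},\hat{\bs p}),
\]
where $\Phi$ collects all terms that vanish with $\bs y$. Under Assumption~\ref{assump:sink-recovery}, the matrix $(\bar B \bar F^* - \bar D - \bar L^*)$ is an irreducible Metzler matrix, so the equilibrium and spectral analysis of Theorem~\ref{thm:stability}(ii)--(iii) applies verbatim to the unperturbed limiting system, giving existence of $\bar{\bs p}_{\mathrm{end}} \gg \bs 0$ whenever $\mu(\bar B \bar F^* - \bar D - \bar L^*) > 0$ and only the disease-free equilibrium otherwise. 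Invariance of $[0,1]^{\bar n}$ for $\bar{\bs p}$ and positivity preservation within sinks follow as in Theorem~\ref{thm:stability}(i), the latter because $\bar{\bs x}^\alpha(t) \gg \bs 0$ is guaranteed by strong connectivity of each sink component.

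For statement (i), I would mirror Theorem~\ref{thm:stability}(iv): when $\mu(\bar B \bar F^* - \bar D - \bar L^*) \le 0$, pick a positive diagonal $R$ with $R(\bar B \bar F^* - \bar D - \bar L^*) + (\bar B \bar F^* - \bar D - \bar L^*)^\top R = -K \preceq 0$ and use $V(\bar{\bs p}) = \bar{\bs p}^\top R \bar{\bs p}$. The calculation matches that of Theorem~\ref{thm:stability} except for the additional perturbation $(I-\bar P)\bar B \hat F(\bs x)\hat{\bs p} - \hat L(\bar{\bs x})\hat{\bs p}$, which is bounded in norm by a class $\mc K$ function of $\|\bs y\|_\infty$ since $\hat{\bs x}$ (and hence $\hat F$, $\hat L$) is a linear function of $\bs y$. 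This yields ISS of the $\bar{\bs p}$-subsystem with input $\bs y$, and the cascade lemma~\cite[Lemma 4.7]{HKK:02} (adapted as in the remark after Theorem~\ref{thm:stability}) delivers global asymptotic stability. Instability for $\mu > 0$ follows from the block-diagonal linearization, as $\mu(Q^\top) = 0$. For statement (ii), I would repeat the argument of Theorem~\ref{thm:stability}(v) with $\tilde{\bar{\bs p}} = \bar{\bs p} - \bar{\bs p}_{\mathrm{end}}$ and Lyapunov function $V_2 = \tilde{\bar{\bs p}}^\top R_2 \tilde{\bar{\bs p}}$, with $R_2$ produced from the Perron--Frobenius structure of the irreducible Metzler matrix $((I-\bar P_{\mathrm{end}})\bar B \bar F^* - \bar D - \bar L^*)$, followed by the analogue of Lemma~\ref{Lemma:p_i tends to 0 implies p tends to 0} to rule out convergence to the disease-free equilibrium from any $\bar{\bs p}(0) \neq \bs 0$.

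The main obstacle will be the coupling term $\hat L(\bar{\bs x})\hat{\bs p}$ in~\eqref{eq_p_bar}: the entries $(\hat l^\alpha)_{ij}$ carry a factor $1/\bar x_i^\alpha$, so I must argue that sink populations stay uniformly bounded away from zero along trajectories (using strong connectivity of each sink and positivity of the initial mass assigned to it) before bounding the coupling as a class $\mc K$ function of $\|\bs y\|_\infty$. Everything else is a translation of the strongly connected proof to the reduced coordinates under Assumption~\ref{assump:sink-recovery}.
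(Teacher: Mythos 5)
Your proposal follows essentially the same route as the paper: exploit that $\hat{\bs x}$ (hence $\hat F\hat{\bs p}$ and $\hat L\hat{\bs p}$) decays to zero along the cascade $\dot{\bs y}=\Gamma\bs y$, treat these couplings as vanishing perturbations bounded by a class $\mc K$ function of $\|\bs y\|$, and rerun the quadratic Lyapunov/ISS argument of Theorem~\ref{thm:stability} with the irreducible Metzler matrix $(\bar B\bar F^*-\bar D-\bar L^*)$, both for the disease-free and the endemic case. Your explicit attention to keeping $\bar x_i^\alpha$ uniformly bounded away from zero (because of the $1/\bar x_i^\alpha$ factors in $\hat L$) is a point the paper passes over tacitly when asserting exponential decay of the coupling terms, so it is a welcome refinement rather than a deviation.
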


\begin{proof}
We start with establishing the first statement. 
Let $\bs c_1 = \hat F(\bs x) \hat{\bs p}$ and $\bs c_2 = \hat L (\bar{\bs x}) \hat{\bs p}$. Note that entries of $\hat F$ and $\hat L$ are either zero or exponentially converge to zero, and $\hat{\bs p} \in [0,1]$. Therefore, $\bs c_1$ and $\bs c_2$ exponentially converge to the origin.

Under Assumption~\ref{assump:sink-recovery}, $(\bar{B}\bar{F}^*-\bar{D}-\bar L^*)$ is an irreducible Metzler matrix. Together with $\mu (\bar{B}\bar{F}^*-\bar{D}-\bar L^*) \leq 0$, implies there exists a positive diagonal matrix $\bar R$ such that 
\[
\bar R(\bar{B}\bar{F}^*-\bar{D}-\bar L^*)+(\bar{B}\bar F^*-\bar{D}-\bar L^*)^\top \bar R) =-\bar K,
\]
where $\bar K$ is a positive semi-definite matrix. 

Consider the Lyapunov function $\bar V(\bar{\bs p}) = \bar{\bs p}^\top \bar R \bar{\bs p}$. Then
\begin{align}\label{Vdot_cascaded_trivial_unconnected}
     \dot{\bar V} & = 2 \bar{\bs p}^\top R \dot{\bar{\bs p}} \nonumber \\
            & = \bar{\bs p}^\top (\bar R(\bar{B}\bar{F}^*-\bar{D}-\bar L^*)+(\bar{B}\bar F^*-\bar{D}-\bar L^*)^\top \bar R) \bar{\bs p} \nonumber \\
            & \quad + 2 \bar{\bs p}^\top \bar R (\bar{B}\tilde{\bar F} -\tilde{\bar L}) \bar{\bs p} - 2 \bar{\bs p}^\top \bar R \bar{P} \bar{B} (\tilde{\bar F}+ \bar F^*) \bar{\bs p} \nonumber \\ 
            & \quad + 2 \bar{\bs p}^\top (I-\bar{P})\bar{B}\bs c_1+ 2 \bar{\bs p}^\top \bs c_2 \nonumber \\
            & = - \bar{\bs p}^\top \bar K \bar{\bs p} + 2 \bar{\bs p}^\top \bar R (\bar B(I-\bar P)\tilde{\bar F}-\tilde{\bar L})\bar{\bs p} - 2 \bar{\bs p}^\top \bar R \bar P \bar{B} \bar F^* \bar{\bs p} \nonumber \\
            & \quad + 2 \bar{\bs p}^\top (I-\bar P)\bar{B}\bs c_1+ 2 \bar{\bs p}^\top \bs c_2 \nonumber \\
            & \leq - 2 \bar{\bs p}^\top \bar R \bar P\bar{B} \bar F^* \bar{\bs p} + 2\bar k_1(b_1\|\tilde{\bar F}\|+b_2\|\tilde{\bar L}\|+b_3\|\bs c_1\|+\|\bs c_2\|), \nonumber 
            \end{align}
where $\tilde{\bar F}(\bs x) = \bar F(\bs x) -\bar F^* $, $\tilde{\bar L}(\bs x) = \bar L(\bs x) - \bar L^* $, and ${\bar k_1, b_1, b_2, b_3 >0}$ are appropriate positive constants. Pick $\epsilon \in (0,1)$. Then, it follows that 
            \begin{align}
              \dot{\bar V}  & \leq - 2 (1-\epsilon)\bar{\bs p}^\top \bar R \bar P \bar{B} \bar F^* \bar{\bs p} - 2 \epsilon \bar{\bs p}^\top \bar R \bar P \bar{B} \bar F^* \bar{\bs p} \nonumber \\
              &\quad + 2 \bar k_1(b_1\|\tilde{\bar F}\|+b_2\|\tilde{\bar L}\|+b_3\|\bs c_1\|+\|\bs c_2\|) \nonumber \\
            & \leq - 2 (1-\epsilon)\bar{\bs p}^\top \bar R \bar P \bar{B} \bar F^* \bar{\bs p} 
            - \bar k_2\|\bar{\bs p}\|^3_\infty \nonumber \\
            &\quad + 2\bar k_1(b_1\|\tilde{\bar F}\|+b_2\|\tilde{\bar L}\|+b_3\|\bs c_1\|+\|\bs c_2\|) \nonumber \\
            & \leq - 2 (1-\epsilon)\bar{\bs p}^\top \bar R \bar P \bar{B} \bar F^* \bar{\bs p} 
            - \bar k_2\|\bar{\bs p}\|^3_\infty + \rho_1(\|\bs y\|_\infty) \nonumber\\
            & \leq - 2 (1-\epsilon)\bar{\bs p}^\top \bar R \bar P \bar{B} \bar F^* \bar{\bs p},
\end{align}
for ${\|\bar{\bs p}\|_\infty \geq \bar \rho_2(\|\bs y\|_\infty)}= : \sqrt[3]{\frac{\rho_1(\|\bs y\|_\infty)}{\bar k_2}}$, where $\bs y$ is defined in~\eqref{cascaded_trivial_y}. Here, $\bar k_2>0$ is an appropriate constant,  and $\bar \rho_1$ and $\bar \rho_2$ are suitable class $\mc K$ functions. Since $ 2k_1(b_1\|\tilde{\bar F}\|+b_2\|\tilde{\bar L}\|+b_3\|\bs c_1\|+\|\bs c_2\|)$ is a positive definite function of $\bs y$, existence of $\rho_1$ follows from~\cite[Lemma 4.3]{HKK:02}. 
The rest of the argument follows similar to the proof of Theorem~\ref{thm:stability}. 

The stability of endemic equilibrium can be shown by modifying the Lyapunov analysis of the strongly connected case in the proof of Theorem~\ref{thm:stability} similarly to the proof of the first statement.  
\end{proof}


\section{Optimal intervention strategies for epidemic containment}\label{sec:resource-allocation}
We now study optimal intervention strategies for the SIS model with population dispersal where we allocate resources to make the disease-free equilibrium (DFE) stable. 
We consider two \vs{types} of resources that control infection rate and recovery rate, respectively. The former can change the infection rate such that $\beta_i^\alpha \in [\underline{\beta}_i^\alpha, \bar{\beta}_i^\alpha]$ and the cost to achieve infection rate $\beta_i^\alpha$ at node $i$ and layer $\alpha$ is $f_i^\alpha(\beta_i^\alpha)$. The latter can change the recovery rate such that $\delta_i^\alpha \in [\underline{\delta}_i^\alpha, \bar{\delta}_i^\alpha]$ and the cost to achieve recovery rate $\delta_i^\alpha$ at node $i$ and layer $\alpha$ is $g_i^\alpha(\delta_i^\alpha)$.

The condition for stability of the DFE from Theorem~\ref{thm:stability} is
\begin{equation*}
    \mu (BF^*-D-L^*) \leq 0. 
\end{equation*}

Let $\mathtt{L} = \oplus_{\alpha=1}^m \mathtt{L}^\alpha$, where, for each $i, j \in \until{n}$, \vs{
\[
\mathtt{\ell}^\alpha_{ij} = \begin{cases}
-l^{\alpha*}_{ij}, & i \ne j, \\
\bar{\nu}-\nu^{\alpha}_i, & i =j,
\end{cases}
\]
where $\mathtt{\ell}^\alpha_{ij}$ and $l^{\alpha*}_{ij}$ denote the $(i,j)$-th entries of $\mathtt{L}^\alpha$ and $L^{\alpha*}$, respectively, $L^{\alpha*}$ is the block of $L^*$ associated with layer $\alpha$,}
$\bar{\nu}=\max \setdef{\nu^{\alpha}_i}{i \in \until{n}, \alpha \in \until{m}}$. 
Let $\bar{\Delta}=\max_{i, \alpha} \bar{\delta}_i^\alpha$, 
$\hat{\delta}_{i}^\alpha=\bar{\Delta}+1-\delta_i^\alpha$ and
$\hat D = \operatorname{diag}(\hat{\delta}_{1}^1, \ldots, \hat{\delta}_{n}^m)$. 
It can be seen that 
\[
\vs{\mu}(BF^*-D-L^*)=\vs{\mu}(BF^*+\hat{D}+\mathtt{L})-\bar{\Delta}-1-\bar{\nu},
\]
where $\vs{\mu} (\cdot)$ denotes the spectral abscissa of the matrix. 

Thus, minimizing $\vs{\mu}(BF^*-D-L^*)$ is same as minimizing $\vs{\mu}(BF^*+\hat{D}+\mathtt{L})$. Note that $(BF^*+\hat{D}+\mathtt{L})$ is also an irreducible non-negative matrix.  
We now recall the following result for irreducible non-negative matrices. 
\begin{lemma}[Theorem 10.2, \cite{Bullo-book_Networks}]\label{lem:perrron}
Let $M$ be an irreducible non-negative matrix with its spectral-abscissa $\mu(M)$. Then
\[
\vs{\mu}(M) =\inf\{\lambda \in \real:M\bs u \leq \lambda \bs u, \; \textup{for } \bs u \gg \bs 0\}.
\]
\end{lemma}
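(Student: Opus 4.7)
The plan is to use the Perron--Frobenius theorem for irreducible non-negative matrices to pin down the infimum as the Perron eigenvalue of $M$. As a preliminary, I would recall that since $M$ is irreducible and non-negative, its spectral abscissa $\mu(M)$ coincides with its spectral radius, is a simple real eigenvalue, and admits strictly positive right and left eigenvectors $\bs v, \bs w \gg \bs 0$ with $M\bs v = \mu(M)\bs v$ and $\bs w^\top M = \mu(M)\bs w^\top$. These facts I would invoke as standard consequences of the Perron--Frobenius theorem.

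For the ``$\le$'' direction, I would exhibit a feasible point realizing $\mu(M)$: take $\bs u = \bs v$ and $\lambda = \mu(M)$. Then $M\bs u = \mu(M)\bs u \le \mu(M)\bs u$ holds with equality, so $\mu(M)$ lies in the feasible set $\{\lambda \in \real : M\bs u \le \lambda \bs u \text{ for some } \bs u \gg \bs 0\}$, which yields $\inf \le \mu(M)$. Note that this also shows the infimum is attained, though the statement only claims $\inf$.

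For the ``$\ge$'' direction, let $\lambda$ be any feasible value with associated $\bs u \gg \bs 0$ satisfying $M\bs u \le \lambda \bs u$. I would left-multiply this inequality by the strictly positive left eigenvector $\bs w^\top$. Because $\bs w \gg \bs 0$, the inequality is preserved componentwise and then summed to give
\[
\mu(M)\,\bs w^\top \bs u \;=\; \bs w^\top M \bs u \;\le\; \lambda\, \bs w^\top \bs u.
\]
Since both $\bs u$ and $\bs w$ are strictly positive, the scalar $\bs w^\top \bs u$ is strictly positive, and dividing yields $\mu(M) \le \lambda$. Taking the infimum over all feasible $\lambda$ completes the proof.

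The only nontrivial point is the appeal to a strictly positive \emph{left} eigenvector (not just a right one); this is what allows one to collapse the vector inequality $M\bs u \le \lambda \bs u$ to a scalar inequality without losing information, and it is precisely irreducibility that guarantees its existence. Absent irreducibility the left Perron eigenvector could have zero entries, and $\bs w^\top \bs u$ could fail to be strictly positive or the inner product could vanish in the wrong coordinates, breaking the division step.
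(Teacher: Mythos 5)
Your proof is correct. Note that the paper does not actually prove this lemma---it is quoted verbatim as Theorem 10.2 of the cited reference (Bullo, \emph{Lectures on Network Systems}) and used as a black box to set up the geometric program---so there is no in-paper argument to compare against. Your argument is the standard Perron--Frobenius/Collatz--Wielandt one: feasibility of $\mu(M)$ via the strictly positive right Perron eigenvector, and the lower bound by pairing the inequality $M\bs u \leq \lambda \bs u$ with the strictly positive left Perron eigenvector, which is exactly where irreducibility is needed; you also correctly read the feasible set as ``for \emph{some} $\bs u \gg \bs 0$,'' which is the intended meaning of the statement.
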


Using Lemma~\ref{lem:perrron}, it follows that minimizing $\vs{\mu}(M)$ is equivalent to minimizing $\lambda$ such that
\begin{equation} \label{lambda_ineq}
    \frac{(M \bs u)_i}{\lambda u_i} \leq 1, \quad u_i>0 .
\end{equation}


\begin{assumption}\label{assump:posynomial}
We assume that the cost functions $f_i^\alpha(\beta_i^\alpha)$ and  $g_i^\alpha(\delta_i^\alpha)=:\tilde{g}_i^\alpha(\bar{\Delta}+1-\delta_i^\alpha)=\tilde{g}_i^\alpha(\hat{\delta}_i^\alpha)$ are posynomials~\vs{\cite[Section 4.5]{SB-LV:04}} in $\beta_i^\alpha$ and $\hat{\delta}_i^\alpha$, respectively. 
\end{assumption}


We now pose the optimization problem to maximize the convergence rate for the DFE. 
\begin{align}\label{eq:multi-layer-optimization}
\begin{split}
    \underset{\lambda, u_i,\beta_i,\hat{\delta}_i}{\minimize}\quad &  \lambda \\
    \subject \quad & \frac{1}{\lambda u_i}  \big((BF^*+\hat{D}+\mathtt{L}) \bs u \big)_i \leq 1,\; i \in \until{nm}\\
    & \sum_{i=1}^n \sum_{\alpha=1}^m \vs{f_i^\alpha(\beta_i^\alpha)+\tilde{g}_i^\alpha(\hat{\delta}_i^\alpha)\leq C }\\
   &\vs{\underline{\beta}_i^\alpha \leq \beta_i^\alpha \leq \bar{\beta}_i^\alpha, \; i \in \until{n}, \alpha \in \until{m}}\\
   &\vs{\bar{\Delta}+1-\bar{\delta}_i^\alpha \leq \hat{\delta}_i^\alpha \leq \bar{\Delta}+1-\underline{\delta}_i^\alpha, \; i \in \until{n},}\\
   & \qquad \qquad \qquad \vs{\alpha \in \until{m},}
\end{split}
   \end{align}
  where $C >0$ is the allocation budget.  
Under Assumption~\ref{assump:posynomial}, optimization problem~\eqref{eq:multi-layer-optimization} is a 
geometric program~\cite{SB-LV:04} and can be solved efficiently.

 \section{Numerical Illustrations} \label{Sec: numerical studies}
In this section, we illustrate our results through numerical simulations. 
\vs{For simplicity, in all our simulations, we choose the same recovery and infection rates at any given node $i$ across each layer $\alpha$, i.e., $\beta_i^\alpha = \beta_i$ and $\delta_i^\alpha = \delta_i$, for each $\alpha$. Additionally, we choose functions $f_i^\alpha$ and $g_i^\alpha$ independent of $i$ and $\alpha$ and omit these indices. } 

\subsection{Comparison of deterministic and stochastic models}
We start with a numerical simulation of the  epidemic model with multi-layer population dispersal in which we treat epidemic spread as well as population dispersal as stochastic processes. We take two population dispersal network layers: a complete graph and a line graph with $20$ nodes each. The population dispersal transition rates are selected equally among outgoing neighbors of a node for both graphs with total transition rate $\nu_i^\alpha = 0.1$, for each $i$ and $\alpha$. \vs{For the stochastic simulations, the initial sizes of sub-populations at each node are selected as
\begin{align*}
 & \begin{smallmatrix}
500\times \{7, &     5, &     3, &     1, &     5, &     7, &     8, &     9, &     6, &     7, &     7, &     7, &     5, &     9, &     8, &     6, &    10, &     9, &    10, &     5
\},
\end{smallmatrix} \text{ and } \\
 &\begin{smallmatrix}
500\times \{3, &     3, &    2, &     1, &     2, &     3, &     4, &     4, &    5, &     3, &    3, &     4, &     4, &     3, &     4, &    2, &     6 , &    3, &     3 , &    4
\},
\end{smallmatrix}   
\end{align*}
in layer 1 and layer 2, respectively.}

If the recovery rates, infection rates, and the initial fraction of the infected population are the same for all the nodes, population dispersal does not play any role. Therefore, we have chosen heterogeneous recovery  rates across nodes to elucidate the influence of population dispersal. \vs{The infection rates $\{\delta_1,\ldots, \delta_{20}\}$ are arbitrarily selected as
\[
 \begin{smallmatrix}
\{ 0.3, & 0.22, & 0.21, & 0.25, & 0.3, & 0.21, & 0.23, & 0.24, & 0.21, & 0.22, \\ &  0.25, & 0.21, & 0.3, & 0.28, & 0.22, & 0.26, & 0.21, & 0.3, & 0.28, & 0.25\}.
\end{smallmatrix}
\]
}
The fractions of infected populations for different cases are shown in Fig.~\ref{fig:Stochastic}. The corresponding simulations of the deterministic model as per Proposition \ref{prop:model} are also shown for comparison.
We consider two cases corresponding to the stable disease-free equilibrium and stable endemic equilibrium, respectively. \vs{Fig.~\ref{fig:Stochastic}~(a) corresponds to $\beta_i =0.2$, for each node, i.e., the case $\delta_i \geq \beta_i$ for each $i$, whereas Fig.~\ref{fig:Stochastic}~(c) corresponds to $\beta_i =0.31$, for each node, i.e., the case $\delta_i< \beta_i$ for each $i$.} The results support statements (iii) and (ii) of Corollary \ref{cor:dis-free} and lead to, respectively, the stable disease-free equilibrium and the stable endemic equilibrium.

\begingroup
\centering
\begin{figure}[htb]
\centering

\subfigure[Stable disease-free equilibrium: Stochastic model]{\includegraphics[width=0.23\textwidth, trim={1mm 2mm 9mm 9mm},clip]{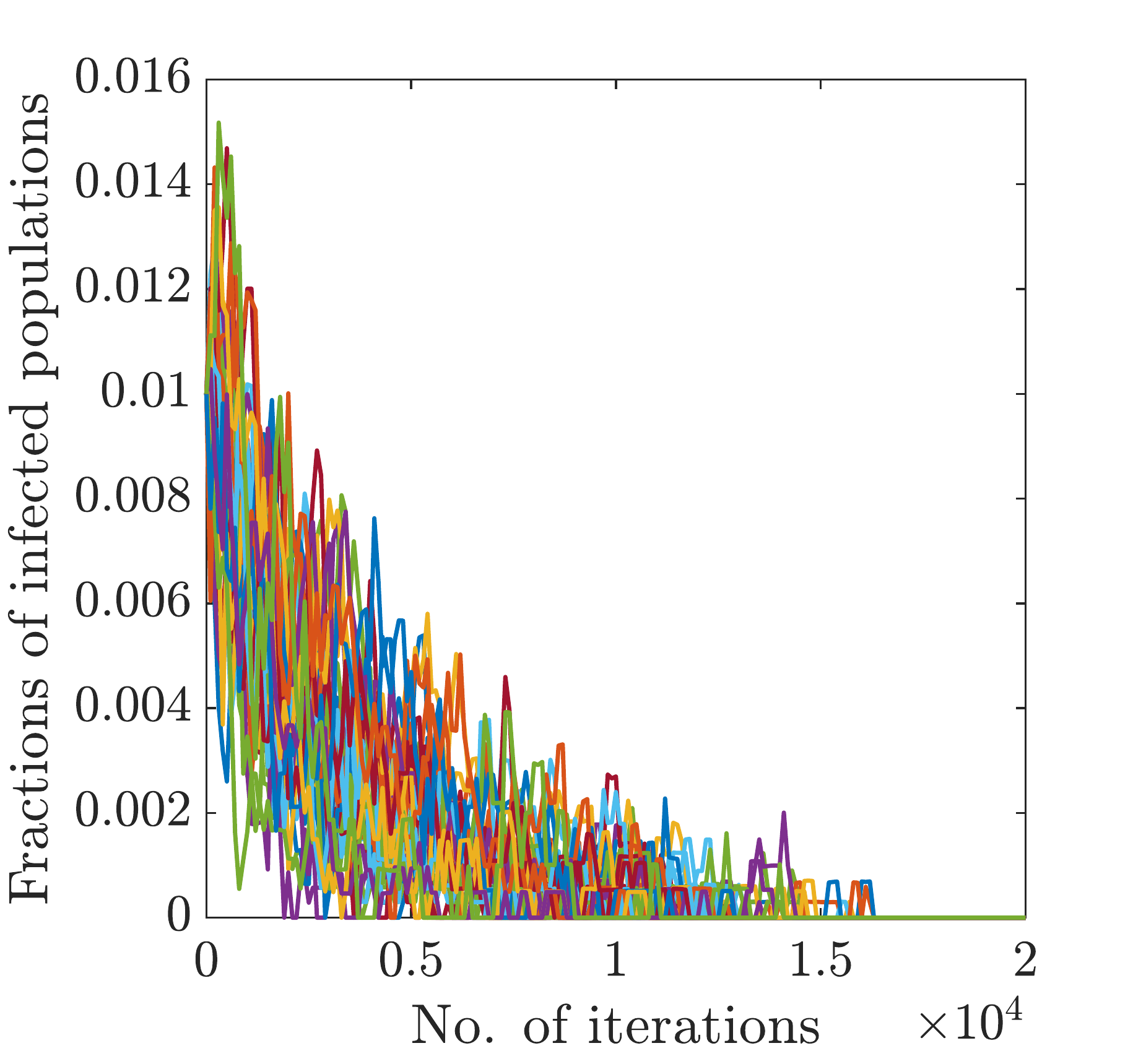}}\label{fig:Stochastic_a}
\subfigure[Stable disease-free equilibrium: Deterministic model]{\includegraphics[width=0.23\textwidth, trim={1mm 2mm 9mm 9mm},clip]{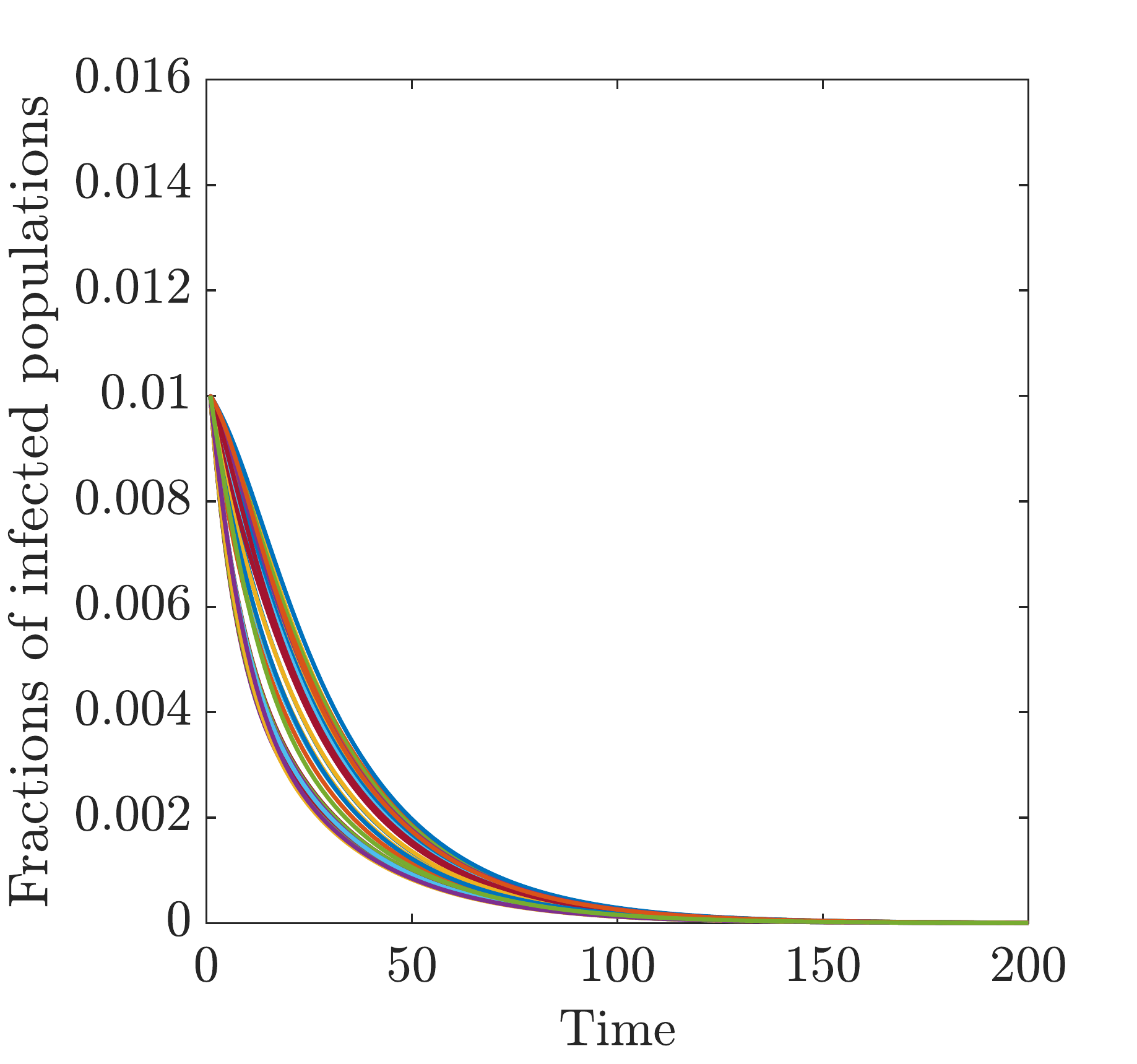}}\label{fig:Stochastic_b}
\subfigure[Stable endemic equilibrium: Stochastic model]{\includegraphics[width=0.23\textwidth, trim={1mm 2mm 9mm 9mm},clip]{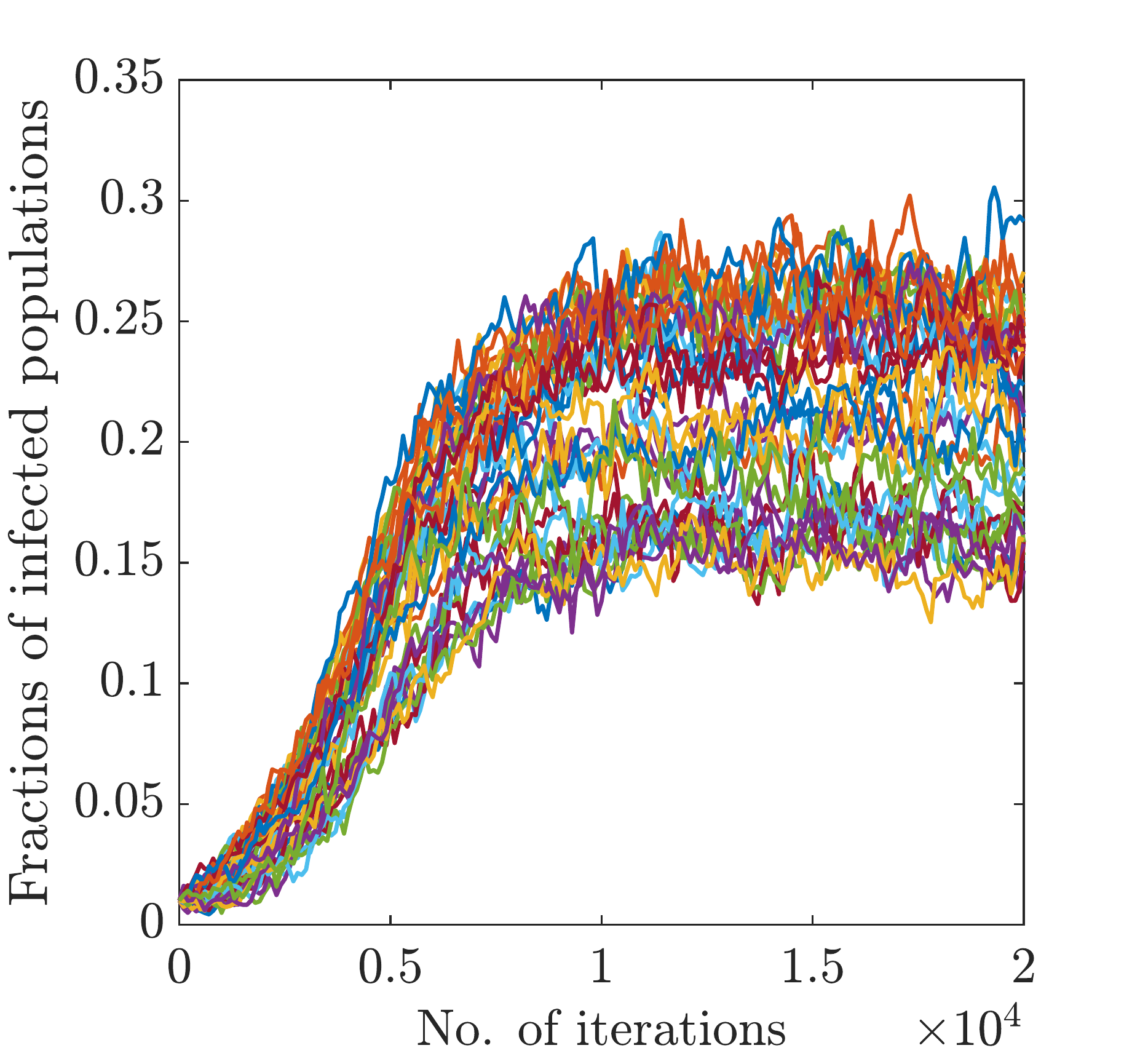}}\label{fig:Stochastic_c}
\subfigure[Stable endemic equilibrium: Deterministic model]{\includegraphics[width=0.23\textwidth, trim={1mm 2mm 5mm 4mm},clip]{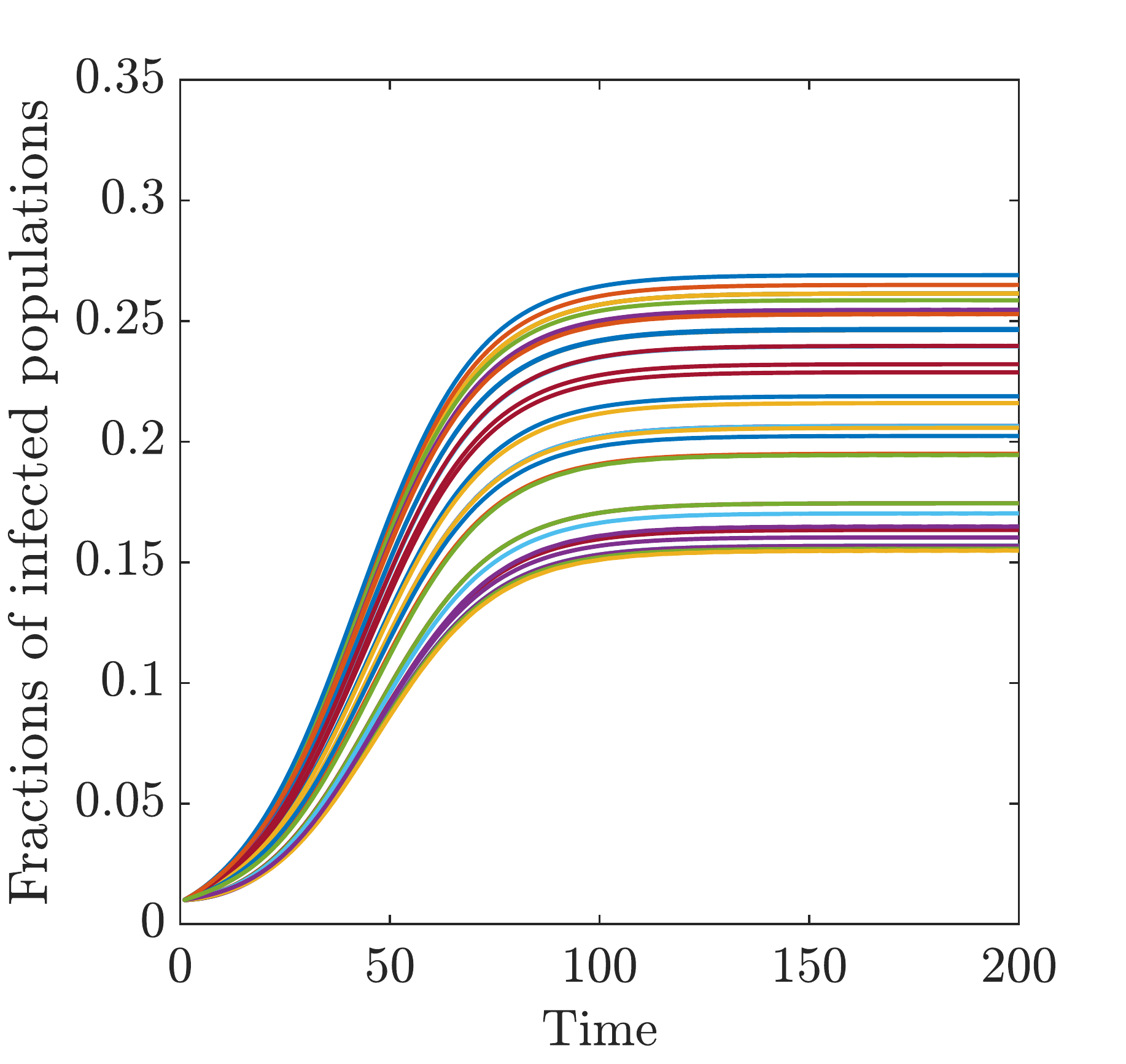}}\label{fig:Stochastic_d}

\caption{\vs{Comparison of the stochastic and deterministic epidemic spread with population dispersal on a two-layer graph with $n=20$ nodes. The layers correspond to complete and line graphs, respectively. 
In each layer, individuals move  to their neighbors  with equal probability with total transition rate $\nu_i^\alpha = 0.1$. Initial fraction of infected population at each node is $p_i^\alpha(0)=0.01$. Each iteration in the stochastic model corresponds to time-step $0.01$ sec. Different trajectories correspond to the fraction of infected population at each node in both layers. Some trajectories are overlapping.} }
\label{fig:Stochastic}
\end{figure}
\endgroup

\subsection{Influence of network structure on epidemic propagation} 

Once we have established the correctness of deterministic model predictions with the stochastic simulations, we perform the simulations using only the deterministic model. We study the effect of multi-layer population dispersal over different pairs of population dispersal graph structures: complete-line graph, complete-ring graph, and complete-star graph. \vs{The number of nodes in each layer is selected as $10$.} We choose different population sizes for the two population dispersal layers and take the population dispersal transition rates to keep the equilibrium distribution of population the same for both the layers across all pairs (taken as uniform equilibrium distribution) by using instantaneous transition rates from the Metropolis-Hastings algorithm \cite{Hastings_MetroplisHastingsMC}. The total transition rate is selected as $\nu_i^\alpha = 0.1$, for each $i$ and $\alpha$. This shows the effect of different population dispersal graph structures on epidemic spread while the equilibrium population distribution remains the same. \vs{We select the infection rate $\beta_i= 0.31$, for each $i$ and the recovery rates $\{\delta_1, \ldots, \delta_{10}\}$ as
\[
 \begin{smallmatrix}
\{ 0.3, &0.22, & 0.21 , & 0.25, & 0.3, & 0.21 , & 0.23 , & 0.24 , &0.21 , & 0.22\}.
\end{smallmatrix}
\]}

Fig.~\ref{fig:Deterministic_SameMobilityEqbDist} shows the fractions of infected population trajectories for $10$ nodes connected with different pairs of graph structures. The values of equilibrium fractions are affected by the presence of population dispersal and are different for different graph structures.

\begingroup
\centering
\begin{figure}[htb]
\centering
\subfigure[Complete-Line graphs; graph 1]{\includegraphics[width=0.23\textwidth, trim={1mm 2mm 9mm 9mm},clip]{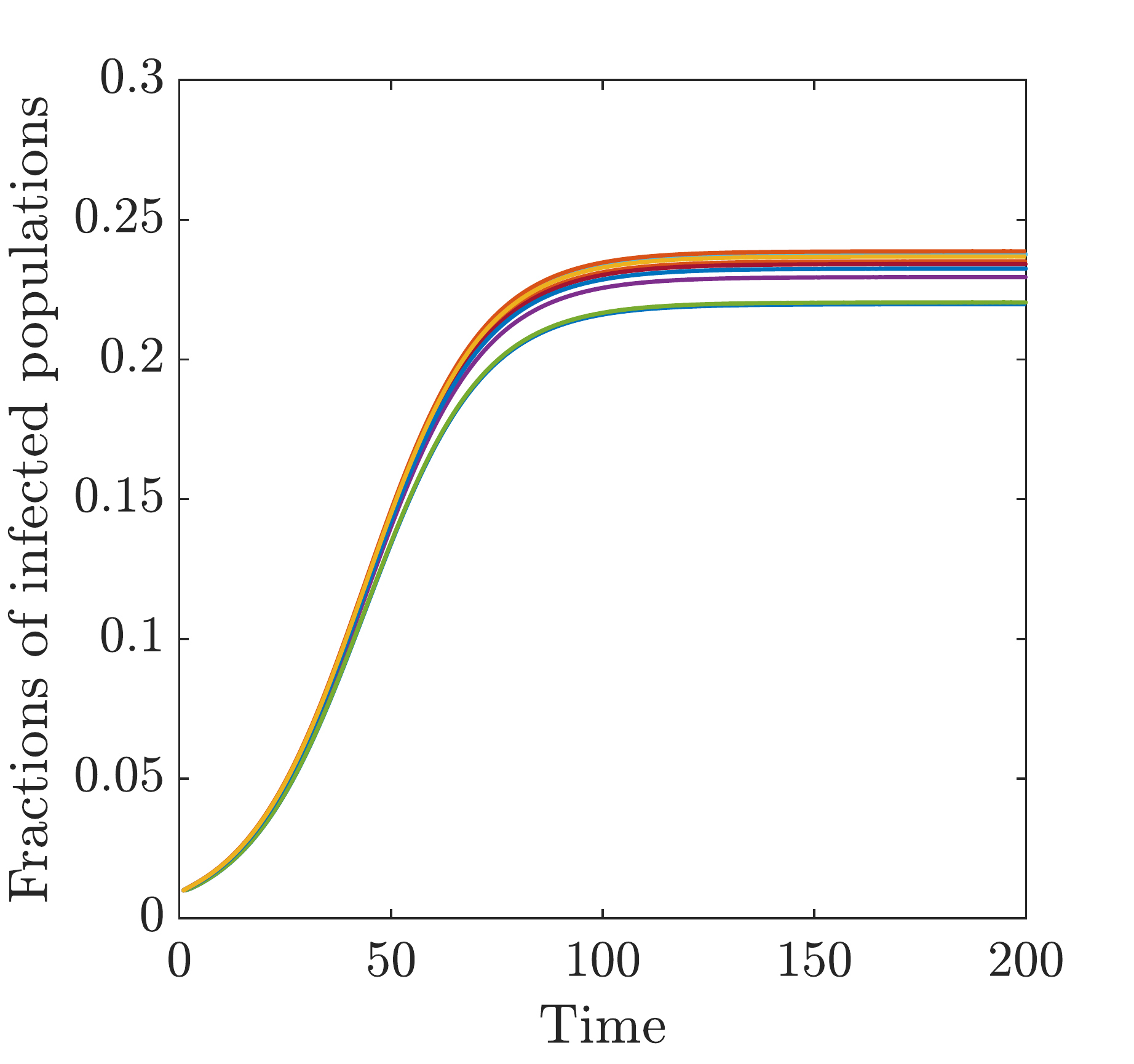}}\label{fig:Samedist_L-L_1}
\subfigure[Complete-Line graphs; graph 2]{\includegraphics[width=0.23\textwidth, trim={1mm 2mm 9mm 9mm},clip]{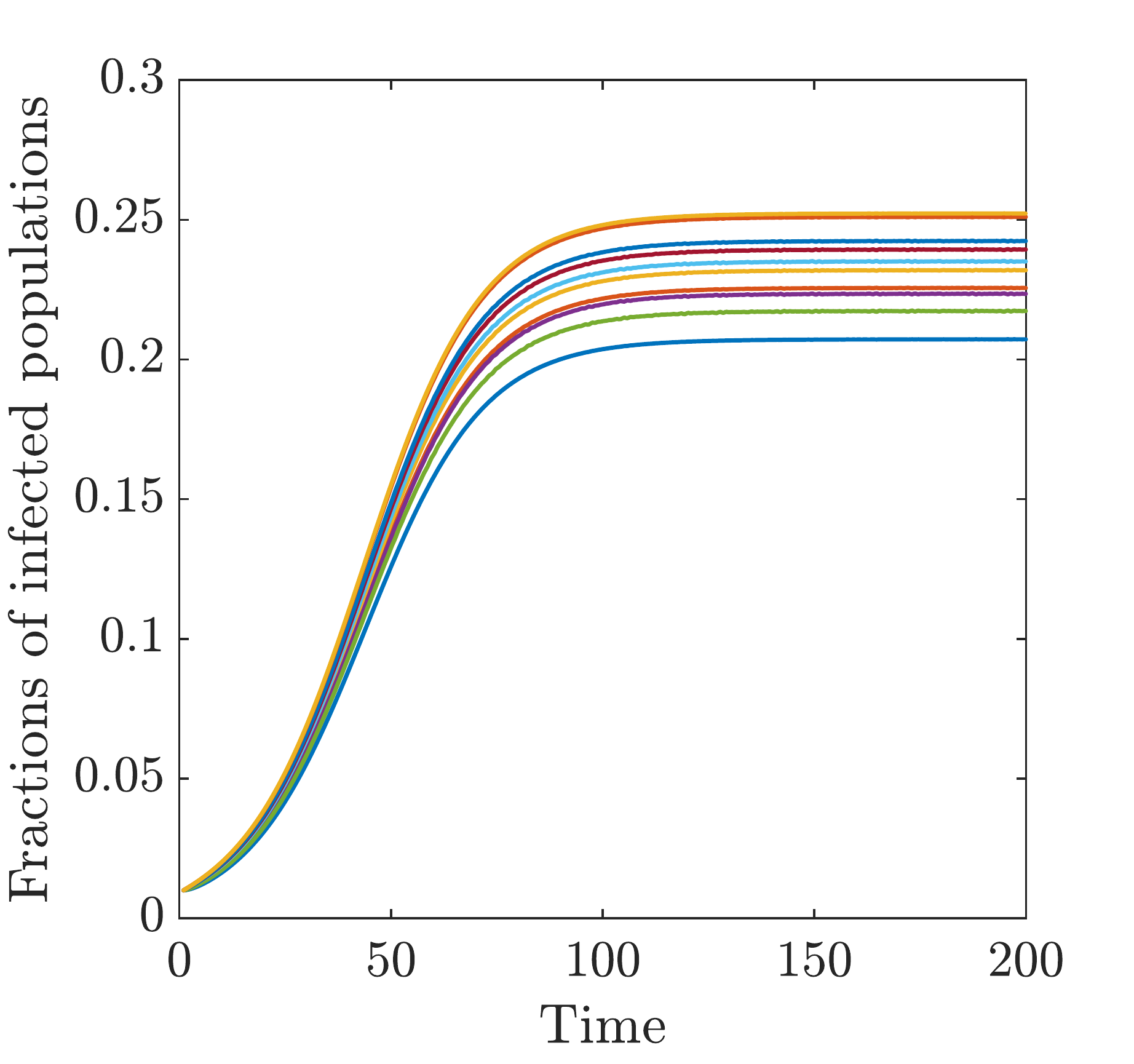}}\label{fig:Samedist_L-L_2}
\subfigure[Complete-Ring graphs; graph 1]{\includegraphics[width=0.23\textwidth, trim={1mm 2mm 9mm 9mm},clip]{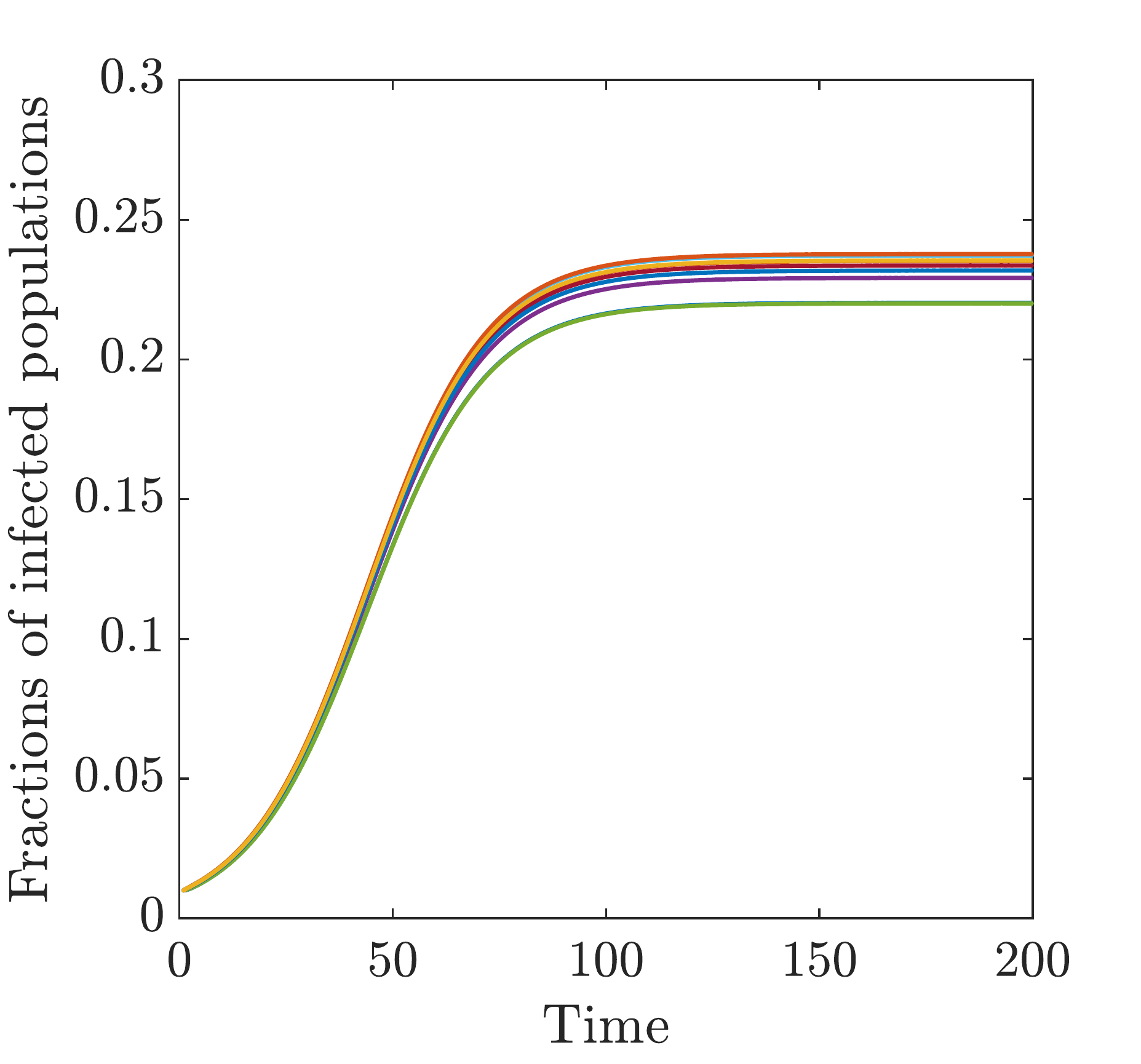}}\label{fig:Samedist_L-R_1}
\subfigure[Complete-Ring graphs; graph 2]{\includegraphics[width=0.23\textwidth, trim={1mm 2mm 9mm 9mm},clip]{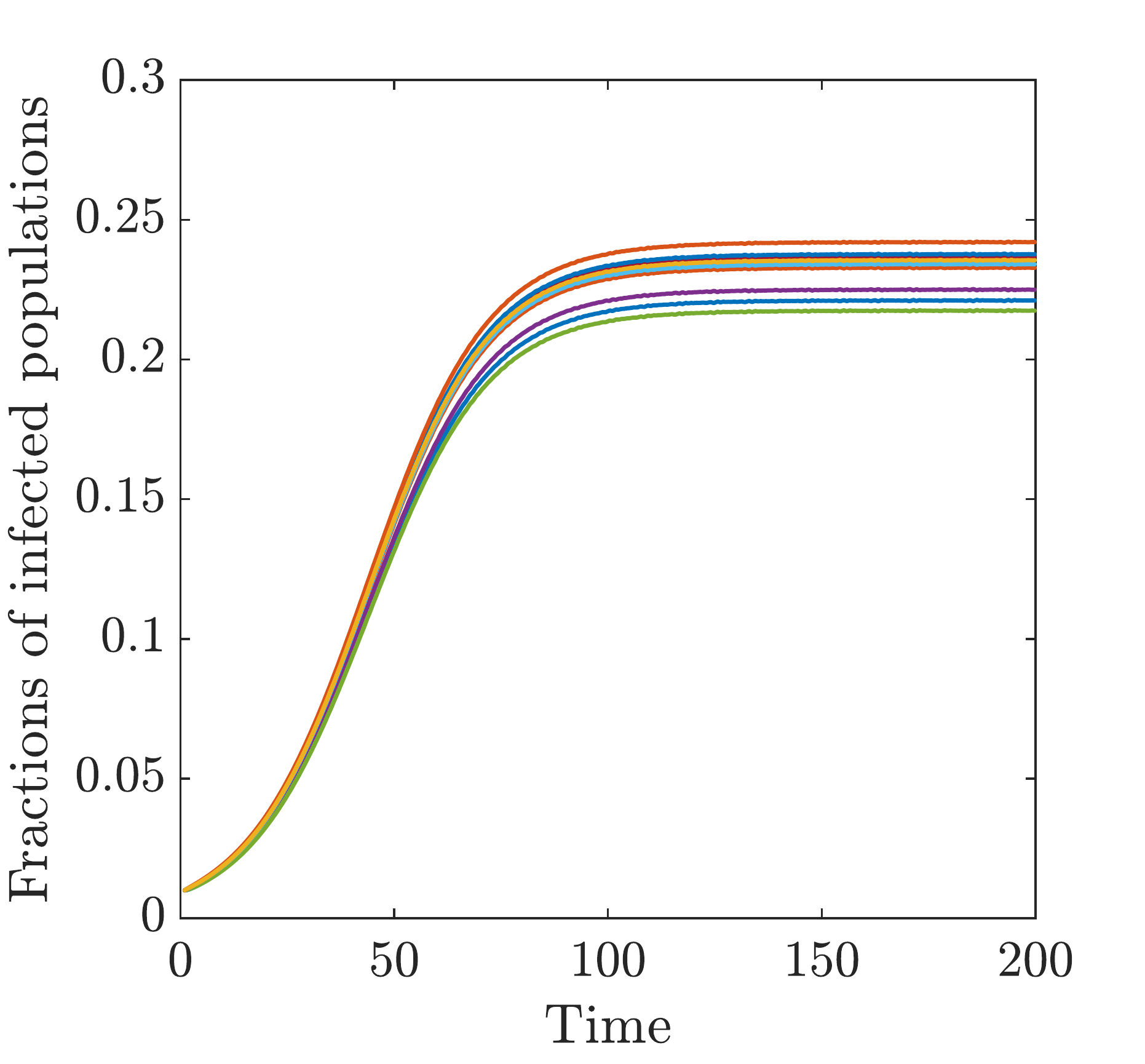}}\label{fig:Samedist_L-R_2}
\subfigure[Complete-Star graphs; graph 1]{\includegraphics[width=0.23\textwidth, trim={1mm 2mm 9mm 9mm},clip]{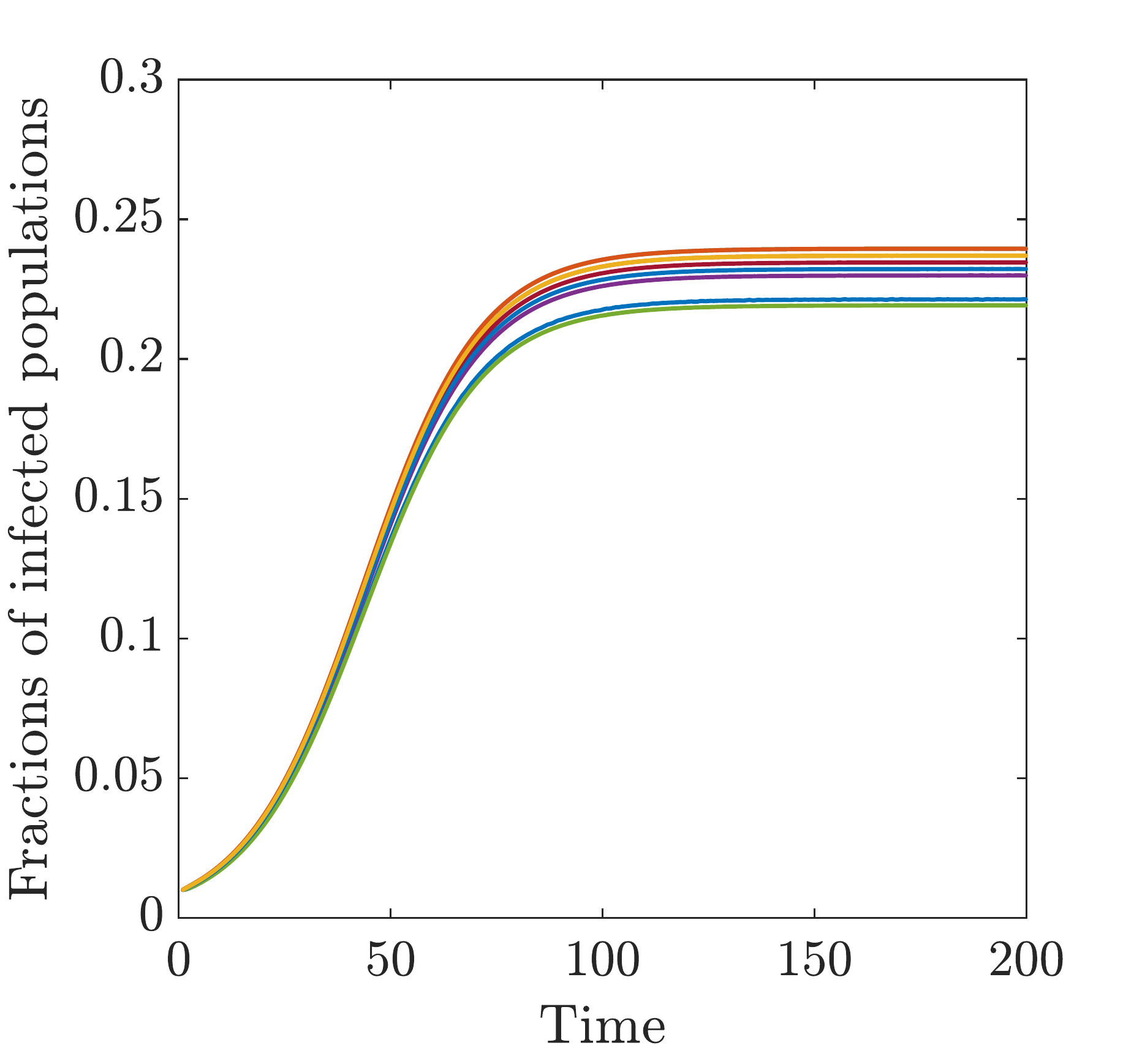}}\label{fig:Samedist_L-S_1}
\subfigure[Complete-Star graphs; graph 2]{\includegraphics[width=0.23\textwidth, trim={1mm 2mm 9mm 9mm},clip]{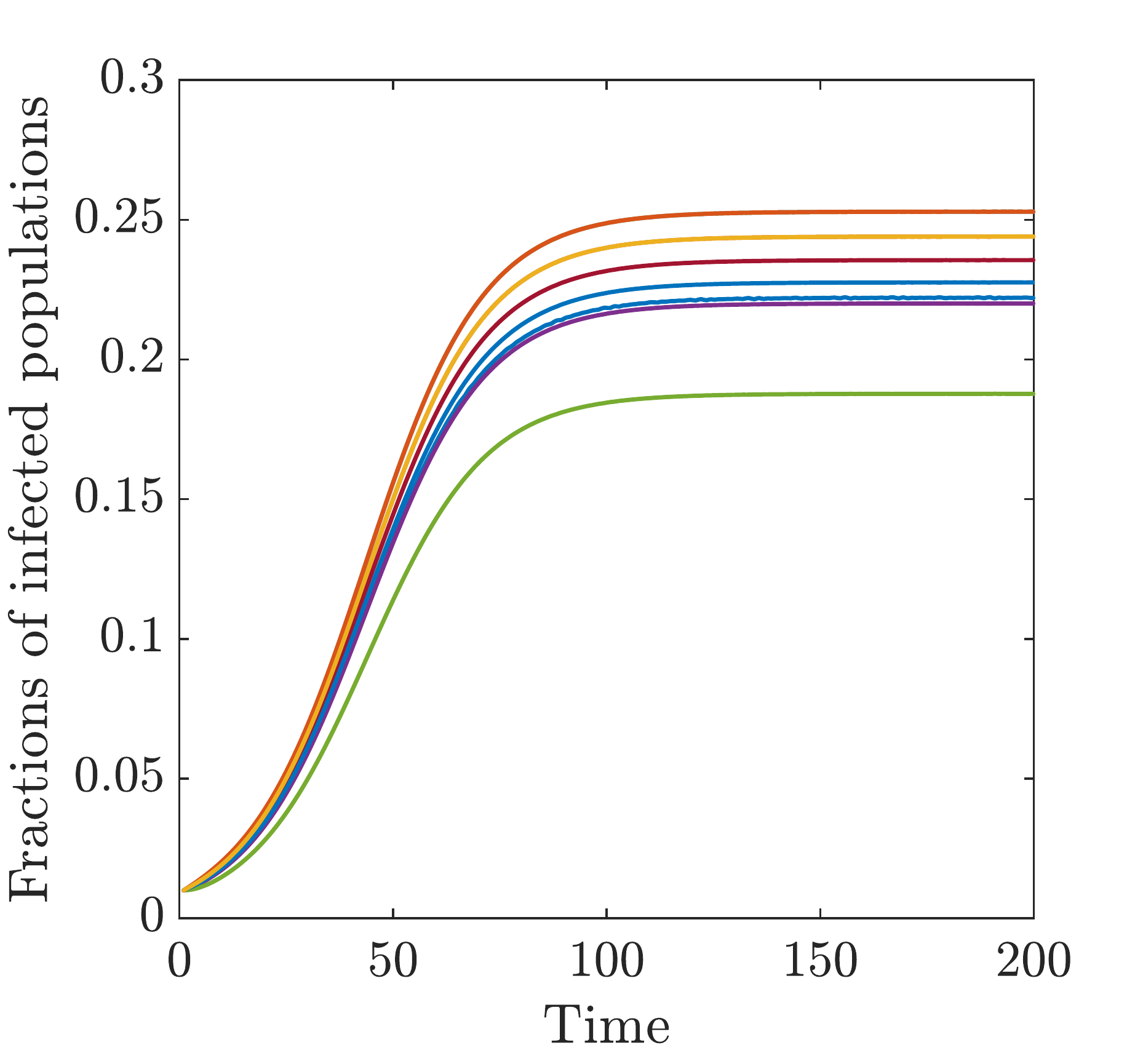}}\label{fig:Samedist_L-S_2}
\caption{\vs{Simulation of the deterministic model of epidemic spread for 2-layer graphs with different structures. Each graph has $10$ nodes and fraction of infected population at each node in both layers is $p_i^\alpha(0)=0.01$.}}
\label{fig:Deterministic_SameMobilityEqbDist}
\end{figure}
\endgroup

Next, we verify Statement (iv) of Corollary \ref{cor:dis-free}, for a single layer population dispersal model, where some recovery rates $\delta_i$'s may be lesser than the infection rates $\beta_i$'s but  the disease-free equilibrium may still be stable. We take a complete graph of $n=20$ nodes and select the population dispersal transition rates equally among outgoing neighbors of a node and the total transition rate $\nu_i = 0.1$, for each node $i$.
These choices fix the values of $\bs w$, $L^*$ and $\lambda_2$. 
Next, we compute $\subscr{s}{lower} = -\frac{\lambda_2}{4n+1}-0.0013$ and take 
$s= 0.8 \subscr{s}{lower}=-0.0010$.
We select $\beta_i=0.3$, $\delta_1=\delta_n=\beta_i+s$ and the rest $\delta_i$'s are assumed to be the same and are computed to satisfy the condition in Statement (iv) of Corollary \ref{cor:dis-free}. This yields $\delta_1 = \delta_n = 0.2990$ and $\delta_i = 0.3099$ for $i \in \{2,\dots,n-1\}$.
Fig.~\ref{fig:Lambda2 sufficient cond Complete graph} shows the trajectories of infected fraction populations. As can be seen, the trajectories converge to the disease-free equilibrium.

\begin{figure}[ht]
    \centering
    \includegraphics[width=0.5\linewidth, trim={1mm 2mm 9mm 9mm},clip]{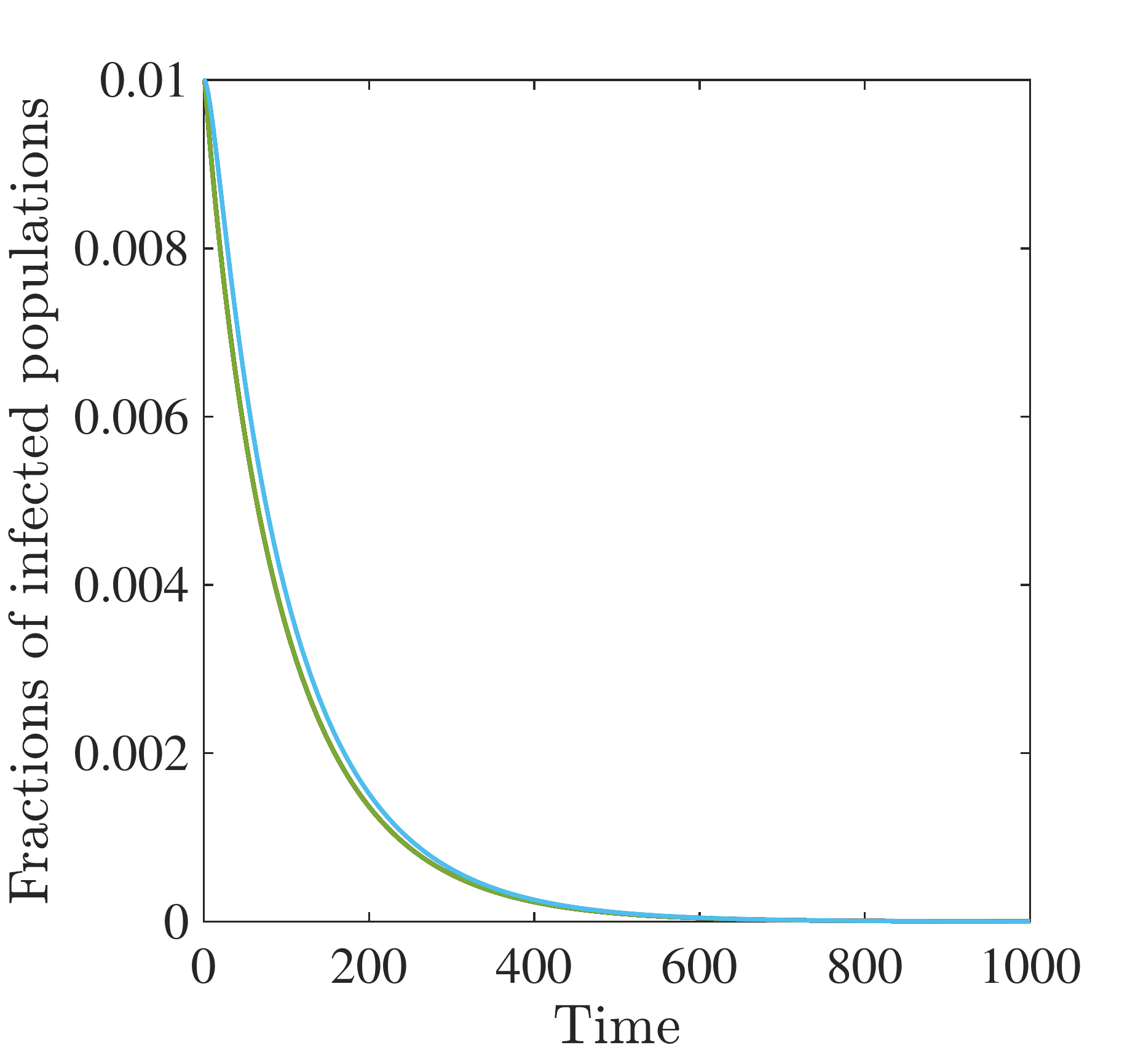}
    \caption{\vs{Illustration of Statement (iv), Corollary \ref{cor:dis-free}. A single-layer complete graph with $20$ nodes is consider with initial fraction of infected population $p_i(0)=0.01$ at each node. The nodes belong of two categories corresponding to deficit and excess of recovery rate, and the two trajectories correspond to each category. The recovery rates are selected to satisfy the  sufficient condition in Statement (iv) of Corollary \ref{cor:dis-free}) for the stability of disease-free equilibrium. }}
    \label{fig:Lambda2 sufficient cond Complete graph}
\end{figure}

\subsection{Optimal intervention for epidemic containment}
We now illustrate budget-constrained optimal intervention strategies in a multi-layer network. We consider two layers: layer 1 as a line graph and layer 2 as a ring graph. We use population dispersal transition rates such that the outgoing rates to neighboring nodes are equal and the total outgoing transition rate at each node is fixed and equal to $0.2$. The total number of individuals in layers 1 and 2 are 300 and 500, respectively. 
We adopt the following cost functions
\begin{equation*}
    \begin{gathered}
    f(\beta_i)=\frac{1}{\beta_i}-\frac{1}{\bar{\beta}_i}, \\ g(\delta_i)=\frac{1}{\bar{\Delta}+1-\delta_i} -\frac{1}{\bar{\Delta}+1-\underline{\delta}_i} =\frac{1}{\hat{\delta}_i} -\frac{1}{\bar{\Delta}+1-\underline{\delta}_i}.
    \end{gathered}
\end{equation*}
The bounds on the infection and recovery rates are $\underline{\beta}_i = 0.1,\quad \bar{\beta}_i = 0.4,\quad\ \underline{\delta}_i=0.1,\quad \bar{\delta}_i = 0.4$.
We compute the optimal infection and recovery rate by solving optimization problem~\eqref{eq:multi-layer-optimization}. 
We also solve the resource allocation problem using a na{\" i}ve strategy in which the budget is allocated equally to each node to modify recovery and infection rates within their bounds. Fig.~\ref{fig:lambda against budget C} shows $\mu(BF^*-D-L^*)$ achieved using the optimal intervention as well as the na{\" i}ve strategy versus the allocation budget $C$. It can be seen that $\mu$ is positive for very low values of budget implying unstable disease-free equilibrium. As the allocation budget $C$ increases, $\mu$ saturates around a value of $-0.3$. Fig.~\ref{fig:cost used} shows the resource used by each strategy versus the allocation budget. Note that the na{\" i}ve strategy uses lesser resources compared to the optimal strategy. This is due to the fact that equal allocation of budget leads to unused resources. 
In particular, the recovery rates reach their upper limit before the infection rates reach their lower limit and subsequently, any additional resource available for recovery remains unused.


\begin{figure}[ht!]
\centering
\subfigure[]{\includegraphics[width=0.475\linewidth, trim={1mm 2mm 9mm 9mm},clip]{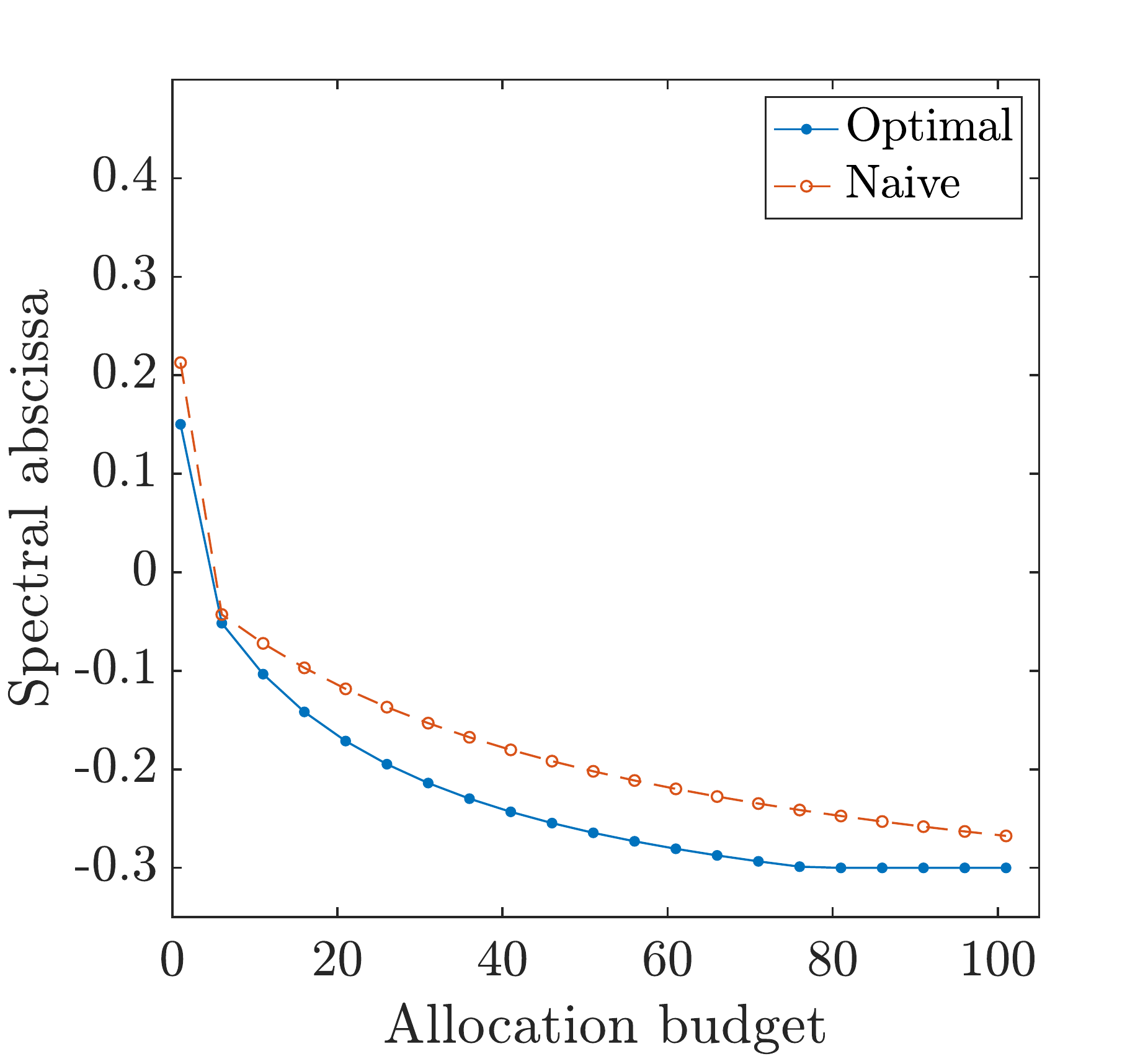}\label{fig:lambda against budget C}}
\subfigure[]{    \includegraphics[width=0.475\linewidth, trim={1mm 2mm 9mm 9mm},clip]{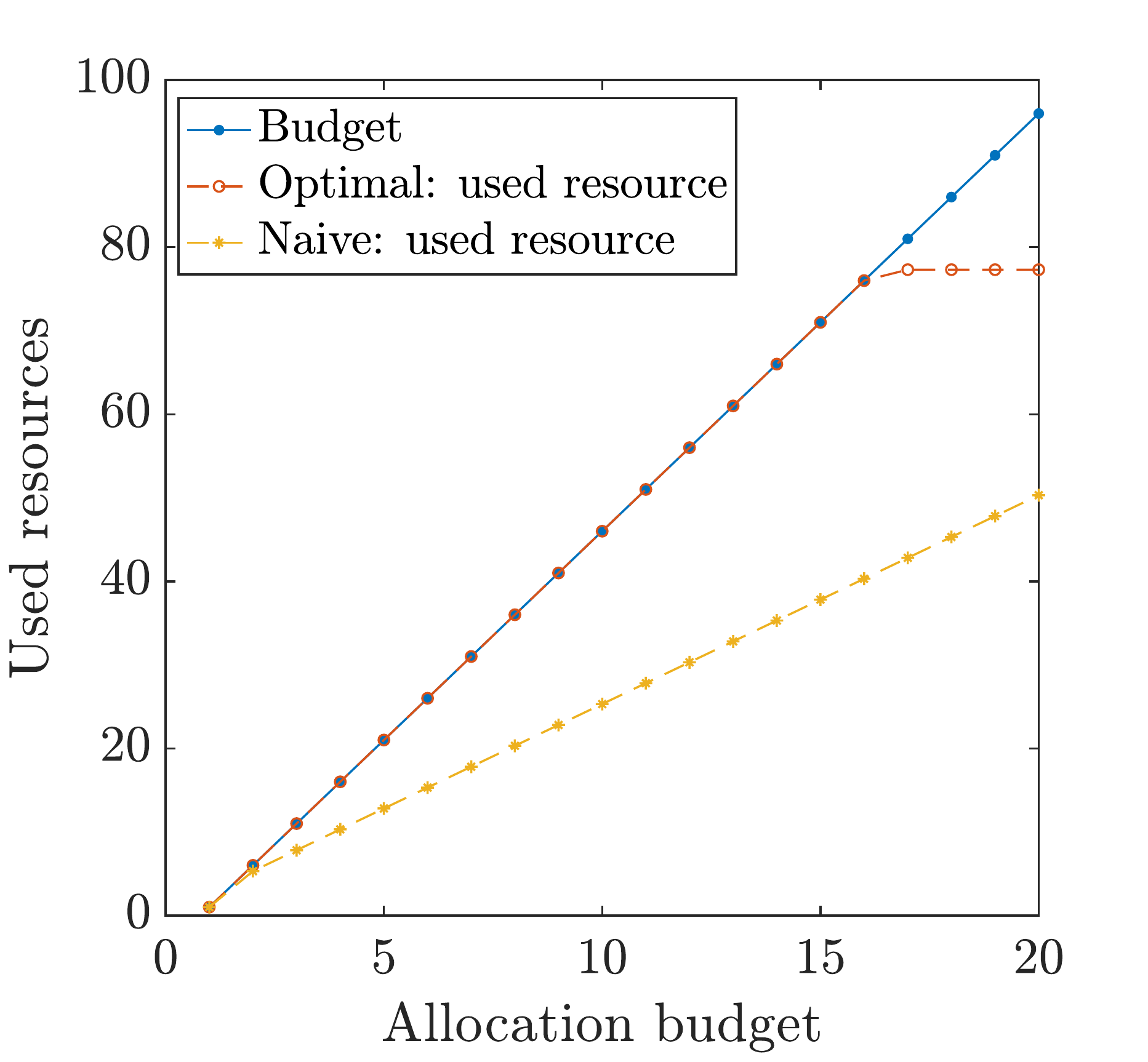} \label{fig:cost used}}
\caption{\vs{Illustration of the optimal intervention strategy for epidemic containment on a two-layer network with $10$ nodes and each layer corresponding to line and ring graph, respectively. The individuals move to neighboring nodes with equal probability with the total transition rate is selected as $\nu_i^\alpha = 0.2$ both layers. (a) The optimal strategy achieves lower spectral abscissa  compared with the n{\" a}ive strategy, for each value of allocated budget $C$. (b) The n{\" a}ive strategy does not use all the allocated budget. }
}
\end{figure}



  
\section{Conclusions and Future Directions} \label{Sec: conclusions}

We derived a continuous-time model for epidemic propagation with population dispersal across a multi-layer network of patches. The epidemic spread within each patch has been modeled as an SIS model with individuals traveling across the patches with different dispersal patterns modeled as layers of a multi-layer network. The derived model has been analyzed to establish the existence and stability of a disease-free equilibrium and an endemic equilibrium under different conditions.
Some necessary and sufficient conditions for the stability of disease-free equilibrium have been established. We studied the optimal intervention for epidemic containment using a geometric program that maximizes the decay rate of disease-free equilibrium under a budget constraint. We also presented numerical studies to support our results and elucidate the effect of population dispersal on epidemic propagation.

\vs{While this work focuses on studying the interaction of population dispersal dynamics with the epidemic spread dynamics, several recent works have focused on the interaction of epidemic dynamics with human behavioral and societal dynamics~\cite{huang2022game, hota2019game,khazaei2021disease,martins2022epidemic,frieswijk2022mean,elokda2021dynamic,satapathi2022coupled,she2022networked,al2021long}. It would be interesting to explore the joint interactions of epidemic, population dispersal, behavioral, and societal dynamics. }


\appendix 

\subsection{Proof of Theorem 1 (iii): Existence of an endemic equilibrium} \label{Appendix: existence of non-trivial eqb}
We first state some properties of M-matrices, which we will use in the proof.

\begin{theorem}[\bit{Properties of M-matrix, \cite{berman1994nonnegative}}]\label{M-matrix properties}
For a real Z-matrix (i.e., a matrix with all off-diagonal terms non-positive) $A\in \real^{n\times n}$, the following statements are equivalent to $A$ being a non-singular M-matrix
\begin{enumerate}
    \item \bit{Stability}: real part of each eigenvalue of $A$ is positive;
    \item \bit{Inverse positivity}: $A^{-1}\geq0$ (for irreducible $A$, $A^{-1}>0$);
    \item \bit{Regular splitting}: $A$ has a convergent regular splitting, i.e., $A$ has a representation $A=M-N$, where $M^{-1}\geq 0$, $N\geq0$ (called regular splitting), with $M^{-1}N$ convergent, i.e., $\rho(M^{-1}N)<1$;
    \item \bit{Convergent regular splitting}: every regular splitting of $A$ is convergent. Further, for a singular M-matrix (i.e. singular Z-matrix with real part of eigenvalues non-negative) regular splitting of $A$ gives $\rho(M^{-1}N)=1$;
    \item \bit{Semi-positivity}: there exists $\bs x \gg \bs 0$ such that $A\bs x \gg \bs 0$;
     \item \bit{Modified semi-positivity}: there exists $\bs x \gg \bs 0$ such that $\bs y = A\bs x > \bs 0$ and matrix $\hat A$ defined by
     \begin{equation*}
         \hat A_{ij}=\begin{cases}
                      1  & \text{if } A_{ij}\neq 0 \text{ or } y_i \neq 0,\\
                      0  & \text{otherwise}
                      \end{cases}
     \end{equation*}
     is irreducible.
    
\end{enumerate}

\end{theorem}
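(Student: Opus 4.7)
The unifying device is the canonical splitting $A = sI - B$ that is available for every Z-matrix, where $s$ is chosen larger than every diagonal entry of $A$ so that $B \geq 0$. Under this identification, the spectrum of $A$ is $s$ minus the spectrum of $B$, and the Perron--Frobenius theorem for (possibly irreducible) nonnegative matrices becomes the master tool. My plan is to establish the six equivalences by a compact cycle of implications rather than exhaustively, so that each condition is reached from the cheapest available predecessor, and to handle (vi) separately at the end since it is the only statement with graph-theoretic content.

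I would first prove the core chain (i) $\Rightarrow$ (ii) $\Rightarrow$ (v) $\Rightarrow$ (i). For (i) $\Rightarrow$ (ii), positivity of $\mathrm{Re}(\lambda(A))$ forces $\rho(B) < s$, so the Neumann series $A^{-1} = s^{-1} \sum_{k \geq 0} (B/s)^k$ converges entrywise and is nonnegative; when $B$ is irreducible, some power of $B$ is strictly positive, upgrading the conclusion to $A^{-1} > 0$. For (ii) $\Rightarrow$ (v), take $\bs x := A^{-1} \bs 1_n$, which is nonnegative, observe $A\bs x = \bs 1_n \gg \bs 0$, and rule out zero components of $\bs x$ via nonsingularity combined with the Z-structure. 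For (v) $\Rightarrow$ (i), existence of $\bs x \gg \bs 0$ with $A\bs x \gg \bs 0$ rewrites as $B\bs x \ll s\bs x$, and the Collatz--Wielandt characterization then gives $\rho(B) < s$, so all eigenvalues of $A$ have positive real part.

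Next I would close the loop with the splitting characterizations. For a regular splitting $A = M - N$, the identity $A^{-1} = (I - M^{-1}N)^{-1} M^{-1}$ makes convergence of $M^{-1}N$ equivalent to $A^{-1} \geq 0$ through the Neumann series, yielding (ii) $\Leftrightarrow$ (iii). To upgrade (iii) to (iv), the same argument is applied to an \emph{arbitrary} regular splitting and comparison is handled via monotonicity of $\rho$ on nonnegative matrices. For the singular M-matrix appendix of (iv), I would perturb $A$ to $A + \epsilon I$, which is a nonsingular M-matrix for each $\epsilon > 0$, apply the nonsingular case, and then send $\epsilon \downarrow 0$ using continuity of the spectral radius on nonnegative matrices to obtain the boundary value $\rho(M^{-1}N) = 1$.

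Finally, (v) trivially implies (vi) since $\bs y = A\bs x \gg \bs 0$ forces $\hat A$ to be the all-ones matrix. The reverse implication (vi) $\Rightarrow$ (v) is the main obstacle, and I expect it to be the only step requiring genuine graph-theoretic reasoning. The idea is to propagate strict positivity along the sparsity pattern: irreducibility of $\hat A$ means the directed graph obtained by overlaying the nonzero support of $A$ with the support of $\bs y$ is strongly connected, so every index is reachable from some coordinate where $y_i > 0$. One then perturbs $\bs x$ by a small nonnegative combination of canonical basis vectors along directed paths in this graph, exploiting the Z-structure (off-diagonals $\leq 0$) to guarantee that each added increment strictly increases the originally-zero coordinates of $A\bs x$ while only mildly decreasing the already-positive ones. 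Choosing the perturbation scale small enough preserves the positivity of existing components while raising all zero ones, producing a new $\bs x \gg \bs 0$ with $A\bs x \gg \bs 0$.
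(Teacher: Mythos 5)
This theorem is quoted verbatim from Berman and Plemmons and used as a black box in the appendix; the paper offers no proof of it, so there is no in-paper argument to compare yours against. Judged on its own terms, your cycle (i) $\Rightarrow$ (ii) $\Rightarrow$ (v) $\Rightarrow$ (i) via the splitting $A = sI - B$, the Neumann series, and Collatz--Wielandt, together with the equivalence of (ii) and the splitting conditions through $A^{-1} = (I - M^{-1}N)^{-1}M^{-1}$, is the standard route and is sound in outline.

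Two steps have genuine gaps. First, your mechanism for (vi) $\Rightarrow$ (v) points in the wrong direction: since the off-diagonal entries of $A$ are $\le 0$, adding $\epsilon\, \bs e_j$ to $\bs x$ changes $(A\bs x)_i$ by $\epsilon A_{ij} \le 0$ for every $i \ne j$, so an increment at $j$ can raise only the single coordinate $(A\bs x)_j$ (and only when $A_{jj}>0$) while pushing the \emph{other} zero coordinates of $A\bs x$ down, not merely the already-positive ones. If the increments are supported on the zero set $Z$ of $A\bs x$, the net change on $Z$ is $A_{ZZ}\,\bs\epsilon_Z$, so making all zero coordinates positive simultaneously is exactly the assertion that the principal submatrix $A_{ZZ}$ is semi-positive, i.e.\ a nonsingular M-matrix --- which is what you are trying to prove; the argument is circular as stated. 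Two repairs are available: either \emph{decrease} $x_j$ at coordinates where $(A\bs x)_j>0$ (this strictly increases $(A\bs x)_i$ for every $i$ with $A_{ij}<0$ and only slightly lowers the already positive $(A\bs x)_j$, so positivity propagates one graph step per iteration, and irreducibility of $\hat A$ guarantees every node is finitely many $A$-steps from the positive set); or, more standard, take a left Perron vector $\bs z$ of $B$, note $\bs z^\top A\bs x=(s-\rho(B))\,\bs z^\top\bs x$, and show that $\rho(B)=s$ forces the support of $\bs z$ to be a nonempty proper subset with no outgoing edges in $\hat A$, contradicting irreducibility. Second, in the singular part of (iv), perturbing to $A+\epsilon I$ does not obviously yield a regular splitting to which the nonsingular case applies: the natural candidate $(M+\epsilon I)-N$ requires $(M+\epsilon I)^{-1}\ge 0$, which does not follow from $M^{-1}\ge 0$ alone, since inverse-positive matrices need not stay inverse-positive after a diagonal shift. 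The lower bound $\rho(M^{-1}N)\ge 1$ is immediate from singularity of $A=M(I-M^{-1}N)$ together with $M^{-1}N\ge 0$; the matching upper bound needs a separate argument rather than a bare continuity appeal.
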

\medskip

An irreducible Laplacian matrix perturbed with a non-negative diagonal matrix with at least one positive element is a non-singular M-matrix (consider Theorem \ref{M-matrix properties} (vi) with $\bs x = \bs 1 \gg \bs 0$). By Assumption~\ref{Assumption:Dnot0}, for each $\alpha$, there exists $k_{\alpha}$ such that $\delta^\alpha_{k_\alpha} > 0$. Hence, block diagonal submatrices of the matrix $L^*+D$ are all non-singular M-matrices, and  $L^*+D$ is a non-singular M-matrix. Similar arguments imply $B^{-1}(L^*+D)$ is a non-singular M-matrix.

We show below that in the case of $\mu (BF^*-D-L^*) > 0$ there exists an endemic equilibrium $\bs p^* \gg 0$, i.e.,
\begin{equation*} 
    \dot{\bs p} |_{\bs p =\bs  p^*} = (BF^*-D-L^*- P^* BF^*) \bs p^* = 0 .
\end{equation*}

Similar to \cite{fall2007epidemiological}, we use Brouwer's fixed point theorem. We split the non-negative matrix $F^*$ as $F^* = I-M$, where $M$ is a Laplacian matrix. Rearranging the terms and writing the above as an equation in $\bs p$ to be satisfied at $\bs p^*$ leads to
\begin{align}
 \!\!\!    (L^*+D)((L^*+D)^{-1} B - I)\bs p 
    & \!= B(P + (I-P)M) \bs p .   \label{eqAppendixeqbM}
\end{align}

Define $A := (L^*+D)^{-1} B$. Since $A^{-1} = B^{-1} (L^*+D)$ is a non-singular M-matrix, its inverse $A$ is non-negative \cite{berman1994nonnegative}. Rearranging  \eqref{eqAppendixeqbM} leads to 
\begin{equation*}
   \bs p = H (\bs p) = (I + A(P+(I-P)M))^{-1}A \bs p .
\end{equation*}

Now we show that $H(\bs p)$ as defined above is a monotonic function in the sense that $\bs p_{2} \geq \bs p_{1}$ implies $H(\bs p_{2}) \geq H(\bs p_{1})$. Define $\tilde{\bs p} := \bs p_2 - \bs p_1$ and $\tilde{P} := \operatorname{diag}(\tilde{\bs p})$. Then,
\begin{align}
  & H(\bs p_{2}) - H(\bs p_{1}) \nonumber \\
         &=
          \left(A^{-1}+P_{2} + (I-P_{2})M\right)^{-1}\bs p_{2} \nonumber \\
& \quad         - \left(A^{-1}+P_{1}+(I-P_{1})M\right)^{-1}\bs p_{1} \nonumber \\
         & = \left(A^{-1}+P_{2} + (I-P_{2})M\right)^{-1}  
         \Big(\bs p_{2} - \nonumber \\
         & \quad \left(A^{-1}+P_{2}+ (I-P_{2})M\right)\left(A^{-1}+P_{1}+(I-P_{1})M\right)^{-1}\bs p_{1}\Big)\nonumber\\
         & = \left(A^{-1}+P_{2} + (I-P_{2})M\right)^{-1} \times \nonumber \\ 
         &\qquad \qquad \Big(\tilde{\bs p} 
        - \tilde{P}(I-M)\left(A^{-1}+P_{1}+(I-P_{1})M\right)^{-1}\bs p_{1}\Big)\nonumber\\
         & = (A^{-1}+P_{2} + (I-P_{2})M)^{-1} \times   \Big(I  - \nonumber \\ 
         & \quad \operatorname{diag}\left((I-M)(A^{-1}+P_{1}+(I-P_{1})M)^{-1}\bs p_{1}\right)\Big)\tilde{\bs p} \label{eqHMultiplex}  
\end{align}

Since $(A^{-1}+P_{2} + (I-P_{2})M) = B^{-1}(L^*+D) + P_{2} +(I-P_{2})M$ is a non-singular M-matrix (consider Theorem \ref{M-matrix properties} (vi) with $\bs x =\bs 1 \gg \bs 0$), its inverse and hence the first term above is non-negative. The second term is shown to be non-negative as below. Rewriting the second term 
\begin{align}
        & \Big(I - \operatorname{diag}\left((I-M)(A^{-1}+P_{1}+(I-P_{1})M)^{-1} P_{1} \bs 1\right)\Big) \nonumber \\
        & = \operatorname{diag}\Big(\left(I - (I-M)(I + A P_{1} + A (I-P_{1})M )^{-1} A P_{1}\right) \bs 1\Big) \nonumber \\
         & = \operatorname{diag}\Big(\big(I - M - (I-M)(I + A P_{1} + A (I-P_{1})M )^{-1} \nonumber \\
         & \quad \left(A P_{1} + A (I-P_{1})M\right)\big) \bs 1\Big)  \nonumber\\
        & = \operatorname{diag}\Big((I-M)\big(I - (I + A P_{1} + A (I-P_{1})M )^{-1} \nonumber \\
        & \quad \left(A P_{1} + A (I-P_{1})M \right)\big) \bs 1\Big) \nonumber \\
         & = \operatorname{diag}\Big((I-M)\left(I + A P_{1} + A (I-P_{1})M \right)^{-1} \bs 1\Big) \nonumber \\
         & = \operatorname{diag}\Big(F^* \big(A^{-1} + P_{1} + (I-P_{1})M \big)^{-1} A^{-1} \bs 1\Big)  \geq 0 , \label{eqIAPMultiplex}
\end{align}
where we have used the identity
\begin{equation*}
    (I + X)^{-1} = I - (I+X)^{-1}X ,
\end{equation*}
and $M \bs 1 = \bs 0$ , since $M$ is a Laplacian matrix. The last inequality in \eqref{eqIAPMultiplex} holds as $A^{-1} \bs 1 = B^{-1} (L^* + D) \bs 1 = B^{-1} D \bs 1 \geq \bs 0$ and $(A^{-1} + P_{1} + (I-P_{1})M)^{-1} \geq \bs 0 $, since it is the inverse of an M-matrix. The last term in the last line of \eqref{eqHMultiplex} is $\tilde{\bs p} \geq \bs 0$. This implies that $H(\bs p)$ is a monotonic function. Also, argument similar to above can be used to show that $H(\bs p) \leq \bs 1$ for all $\bs 0 \leq \bs p \leq \bs 1$. Therefore, $H(\bs 1) \leq \bs 1$.\\

Applying the converse of Theorem \ref{M-matrix properties}~(iv), with Z-matrix as $(L^*+D)-BF^*$, where $(L^*+D)^{-1}\geq0$, $BF^*\geq0$ implies $\mu \left(BF^*-(D+L^*)\right) > 0$ if and only if $R_0= \rho(AF^*) = \rho(A(I-M)) > 1$. Now, $A$ is a block-diagonal matrix with block-diagonal terms as $A^{\alpha} = (L^{*\alpha}+D^{\alpha})^{-1}B^{\alpha}$, which are inverse of irreducible non-singular M-matrices and hence are positive. Using the expression for $F$ gives $AF^*= [(A^1 \subscr{F}{blk}(\bs x^*))^\top,\dots,(A^m \subscr{F}{blk}(\bs x^*))^\top]^\top$. Since $A^\alpha > 0$ and $\subscr{F}{blk}(\bs x^*) \geq 0 $ with no zero column, $AF^*>0$ and hence irreducible.  Since $AF^*$ is an irreducible non-negative matrix, Perron-Frobenius theorem implies $\rho(AF^*)$ is a simple eigenvalue satisfying $AF^* \bs u = \rho (AF^*) \bs u = R_0 \bs u$ with $\bs u \gg \bs 0$. Using $F^*=I-M$ implies
\begin{align*}
    A\bs u &= R_0\bs u + AM\bs u = (R_0 -1)\bs u + (I+AM)\bs u .
\end{align*}

Define $U :=\operatorname{diag}(\bs u)$ and $\gamma := \frac{R_0 -1}{R_0}$. Letting $\bs p = \epsilon \bs u$ , we show that $\exists$ $\epsilon _0$ such that  $\epsilon \in (0, \epsilon_0)$ implies  $H(\epsilon \bs u)\geq \epsilon \bs{u}$. Let
\begin{align*}
       \epsilon K(\epsilon) &:= H(\epsilon \bs u) - \epsilon \bs u \\
        & = \big(I+\epsilon AU + A(I-\epsilon U)M\big)^{-1} A\epsilon \bs u - \epsilon \bs u \\
        & = \epsilon \Big(\big(I+\epsilon AU + A(I-\epsilon U)M\big)^{-1}(R_0 -1) \bs u \\
        & \quad + \big(I+\epsilon AU + A(I-\epsilon U)M\big)^{-1}(I+AM) \bs u - \bs u\Big).
\end{align*}
Now we evaluate $ K(\epsilon)$ at $\epsilon = 0$.
\begin{align*}
       K(0) &= (I+AM)^{-1}(R_0 -1) \bs u \\
        &= (R_0 -1)(A^{-1}+M)^{-1}A^{-1} \bs u \\
        &= \frac{(R_0 -1)}{R_0}(A^{-1}+M)^{-1}F^* \bs u \\
        &= \gamma (A^{-1}+M)^{-1}F^* \bs u  \gg \bs 0 .
\end{align*}

The last inequality follows as $\gamma$ and $\bs u$ are both positive, and $(A^{-1}+M)^{-1}F^* = (B^{-1}(L+D)+M)^{-1} F^*> 0$ as $B^{-1}(L+D)+M$ is an irreducible M-matrix and hence its inverse is positive and $F^* \geq 0$  with no zero column. 
Since $K(\epsilon)$ is a continuous function of $\epsilon$ , $\exists$ $\epsilon_0$ such that $\epsilon _0 > \epsilon >0$ implies $K(\epsilon) \gg \bs 0$ and therefore, $H(\epsilon \bs u)\geq \epsilon \bs{u}$.
Therefore there exists an $\epsilon > 0 $ such that $H(\epsilon \bs u) - \epsilon \bs u \geq \bs 0$ or equivalently, $H(\epsilon \bs u)\geq \epsilon \bs{u}$. Taking the closed compact set $J = [ \epsilon \bs u, \bs 1]$, $H(\bs p): J \to J$ is a continuous function of $\bs p$. Brouwer's fixed point theorem implies there exists a fixed point of $H$ in $J$. This proves the existence of an endemic equilibrium $\bs p^* \gg \bs 0$ when $\mu (BF^*-D-L^*) > 0$ or equivalently $R_0 >1$. The uniqueness is further shown in the following proposition.

\begin{proposition}
{If the mapping $H$ has a strictly positive fixed point, then it is unique.}
\end{proposition}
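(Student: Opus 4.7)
The plan is to derive uniqueness from strict sublinearity of $H$ on the positive cone, combined with the monotonicity of $H$ already proved in the existence argument (see~\eqref{eqHMultiplex}). I would first recast the map: factoring $I + A(P + (I-P)M) = A\tilde{G}(\bs p)$ with $\tilde{G}(\bs p) := A^{-1} + P + (I-P)M$ gives $H(\bs p) = \tilde{G}(\bs p)^{-1}\bs p$. For $\bs p \gg \bs 0$, $\tilde{G}(\bs p)$ has non-positive off-diagonals and satisfies $\tilde{G}(\bs p)\bs 1 = B^{-1}D\bs 1 + \bs p \gg \bs 0$; with irreducibility inherited from the strongly connected layers (via $A^{-1}$) and from the cross-layer coupling in $M = I - F^*$, one obtains $\tilde{G}(\bs p)^{-1} > 0$ entrywise.

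The key intermediate step is strict sublinearity: for every $s \in (0,1)$ and $\bs p \gg \bs 0$, $H(s\bs p) \gg s H(\bs p)$. Using $P + (I-P)M = M + PF^*$, one has $\tilde{G}(s\bs p) = \tilde{G}(\bs p) - (1-s)PF^*$, so the resolvent identity $X^{-1} - Y^{-1} = X^{-1}(Y-X)Y^{-1}$ yields
\[
H(s\bs p) - sH(\bs p) = s(1-s)\,\tilde{G}(s\bs p)^{-1} PF^* H(\bs p).
\]
This is strictly positive componentwise because $\tilde{G}(s\bs p)^{-1} > 0$, $P \gg \bs 0$, and $F^* H(\bs p) \gg \bs 0$ (using the block structure $F^* = \bs 1_m \otimes \subscr{F}{blk}^*$ with each $F^{\sigma*}$ having strictly positive diagonal entries, together with $H(\bs p) \gg \bs 0$).

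With strict sublinearity and monotonicity in hand, I would conclude via a standard cone argument. Given two strictly positive fixed points $\bs p_1^*, \bs p_2^* \gg \bs 0$, set $\tau^* := \max_i p_{2,i}^*/p_{1,i}^*$; then $\tau^* \bs p_1^* \geq \bs p_2^*$ with equality at some coordinate $k$, and after swapping the roles of $\bs p_1^*$ and $\bs p_2^*$ if necessary, one may assume $\tau^* \geq 1$. If $\tau^* = 1$, the analogous ratio with roles swapped forces $\bs p_2^* \geq \bs p_1^*$ and hence equality. If $\tau^* > 1$, monotonicity yields $H(\tau^* \bs p_1^*) \geq H(\bs p_2^*) = \bs p_2^*$, while strict sublinearity applied with $s = 1/\tau^* \in (0,1)$ to the point $\tau^* \bs p_1^*$ gives $\bs p_1^* = H(\bs p_1^*) \gg (1/\tau^*)H(\tau^* \bs p_1^*)$, so $\tau^* \bs p_1^* \gg H(\tau^* \bs p_1^*) \geq \bs p_2^*$, contradicting $\tau^* p_{1,k}^* = p_{2,k}^*$. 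The main obstacle will be the componentwise strict positivity in the sublinearity identity, which hinges on irreducibility of $\tilde{G}(\bs p)$; this combines the within-layer strong connectivity carried by $A^{-1}$ with the cross-layer coupling carried by $(I-P)M$, and may require auxiliary control to exclude degenerate configurations (e.g., entire layers saturated at $\bs p^\alpha = \bs 1$) that would sever the cross-layer links.
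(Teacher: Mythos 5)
Your overall strategy (monotonicity plus strict sublinearity of $H$ on the positive cone, closed out by a ratio argument) is viable, and the resolvent-identity computation giving $H(s\bs p)-sH(\bs p)=s(1-s)\,\tilde{G}(s\bs p)^{-1}\operatorname{diag}(\bs p)F^*H(\bs p)$ is correct algebra. The genuine gap is in the contradiction step: you evaluate $H$, monotonicity, and sublinearity at the point $\tau^*\bs p_1^*$ with $\tau^*>1$. Although both fixed points satisfy $\bs p_i^*\ll \bs 1$, the scaled vector $\tau^*\bs p_1^*$ can have entries exceeding $1$ (equality with $\bs p_2^*$ is only enforced at the maximizing coordinate $k$). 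Every M-matrix ingredient you rely on — $\tilde{G}(\bs p)=A^{-1}+P+(I-P)M$ being a Z-matrix, hence a nonsingular M-matrix with $\tilde{G}(\bs p)^{-1}\geq 0$ (and $>0$ under irreducibility), and the monotonicity of $H$ established in \eqref{eqHMultiplex} — requires $\bs p\leq \bs 1$: once some $p_i^\alpha>1$, the factor $(1-p_i^\alpha)$ flips the sign of the off-diagonal entries of $(I-P)M$ in that row, so $\tilde{G}$ is no longer a Z-matrix, its invertibility and inverse-positivity are not guaranteed, and neither $H(\tau^*\bs p_1^*)\geq H(\bs p_2^*)$ nor the strict positivity of $H(\tau^*\bs p_1^*)$ (needed for the sublinearity increment to be $\gg\bs 0$) is justified. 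The degenerate configuration you flag (a whole layer saturated at $\bs 1$) is a secondary issue; the primary failure is that your argument leaves $[0,1]^{nm}$ altogether.

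The gap is repairable while keeping your approach: scale down instead of up. With $s=1/\tau^*\in(0,1)$ one has $s\bs p_2^*\leq \bs p_1^*$ with equality at coordinate $k$, and both $\bs p_2^*$ and $s\bs p_2^*$ lie in $(0,1)^{nm}$, where $\tilde{G}$ is an irreducible nonsingular M-matrix; then monotonicity and your sublinearity identity give $\bs p_1^*=H(\bs p_1^*)\geq H(s\bs p_2^*)\gg sH(\bs p_2^*)=s\bs p_2^*$, contradicting equality at $k$ whenever $\tau^*>1$, and symmetry finishes the proof. This repaired route is genuinely different in packaging from the paper's: the paper never invokes sublinearity, but instead clamps the comparison vector, setting $z_i=\min(\eta q_i^*,1)$ so that all evaluations of $H$ stay in $[0,1]^{nm}$, and shows $H(\bs z)\ll\eta H(\bs q^*)$ directly by the same resolvent manipulation; the clamping is exactly the device that sidesteps the out-of-range problem your version runs into.
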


\begin{proof}
Assume there are two strictly positive fixed points: $\bs 0 \ll \bs p^*\ll \bs 1$ and $\bs 0 \ll \bs q^*\ll \bs 1$. Strict inequality compared to $\bs 1$ is assumed which can be easily proved for any equilibrium point using \eqref{eq_SIS_pi} under Assumptions \ref{Assumption:StrongConnectivity} and \ref{Assumption:Dnot0}.  Define 
\[\eta:=\max \frac{p^*_i}{q^*_i},\quad  k:= \arg\max
\frac{p^*_i}{q^*_i}, \quad z_i = \min (\eta q^*_i,1)\]
Therefore, $\bs p^* \leq \bs z \leq \eta \bs q^*$. Lets assume $\eta >1$, which implies $\bs q^* \ll \bs z $. First we will show that $H(\bs z) \ll \eta H(\bs q^*)$ as follows:

\begin{equation}
\begin{split}
 & H(\bs z)-\eta H(\bs q^*) \\
 & = (A^{-1} + Z + (I-Z)M)^{-1} \bs z \\
 &\quad -\eta (A^{-1} + Q^* + (I-Q^*)M)^{-1} \bs q^* \\
 &\leq (A^{-1} + Z + (I-Z)M)^{-1} (I -(A^{-1} \\
 &\quad + Z + (I-Z)M) (A^{-1} + Q^* + (I-Q^*)M)^{-1}) \eta \bs q^* \\
  &= W^{-1}(I-I-((Z-Q^*)-(Z-Q^*)M)) (A^{-1} + Q^* \nonumber \\
  & \qquad + (I-Q^*)M)^{-1}) \eta \bs q^* \\
  &= - W^{-1}(Z-Q^*)(I-M) (A^{-1} + Q^* + (I-Q^*)M)^{-1} \eta \bs q^* \\
  &= - W^{-1}(Z-Q^*)F^* (A^{-1} + Q^* + (I-Q^*)M)^{-1}\eta \bs q^* \ll \bs 0,
\end{split}
\end{equation}
 where the last inequality uses the result that, $W^{-1}$ and $(A^{-1} + Q^* + (I-Q^*)M)^{-1}$ inverse of non-singular M-matrices are non-negative and non-singular (hence with no zero rows), that $F^*$ is non-negative with no zero rows, that $Z-Q^*$ has all elements positive and, that $\bs q^* \gg \bs 0$ by assumption. Consequently
\begin{equation}
    p^*_k = H_k(\bs p^*)\leq H_k(\bs z)< \eta H_k(\bs q^*) = \eta q^*_k,
\end{equation}
Since, $\eta q^*_k = p^*_k$ by definition, if $\eta > 1$, we have from above $p^*_k<p^*_k$, a contradiction. Hence, $\eta \leq 1$ which implies $\bs p^* \leq \bs q^*$. By switching the roles of $\bs p^*$ and $\bs q^*$ and repeating the above argument we can show $\bs q^* \leq \bs p^*$. Thus $\bs p^* = \bs q^*$ and hence there is a unique strictly positive fixed point.
\end{proof}


\vs{
\subsection{An illustrative example of the reduction in Section~\ref{sec: analysis-non-strong-connected}}\label{app:low-dim}
We further illustrate the system of equations in Proposition~\ref{prop:model} and Corollary~\ref{corr:reduced-model} using a simple two-layer graph shown in Fig.~\ref{fig:illus-graph}.
The reader may also refer to the technical report~\cite{VA-VS:19c} for a single-layer description of these dynamics. 

\begin{figure}[ht!]
    \centering
    \includegraphics[width=0.6\linewidth]{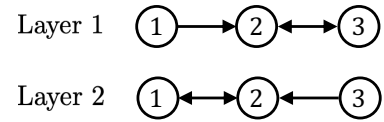}
    \caption{An example two-layer population dispersal graph. Nodes $2$ and $3$ form a strongly connected sink component in layer 1, while nodes $1$ and $2$ form a strongly connected sink component in layer 2.}
    \label{fig:illus-graph}
\end{figure}

For the two-layer graph in Fig.~\ref{fig:illus-graph}, the generator matrices associated with the population dispersal Markov chain are
\[
Q^1 = \left[\begin{smallmatrix}
-q_{12}^1 & q^1_{12} & 0 \\
0 & -q_{23}^1 & q_{23}^1 \\ 0 & q_{32}^1 & -q_{32}^1 
\end{smallmatrix}\right], \; \text{and} \; Q^2 = \left[\begin{smallmatrix}
-q_{12}^2 & q^2_{12} & 0 \\
q^2_{21} & -q_{21}^2 & 0 \\ 0 & q_{32}^2 & -q_{32}^2 
\end{smallmatrix}\right].
\]
The Laplacian matrices associated with each layer are
\begin{align*}
L^1(\bs x) &= \left[\begin{smallmatrix}
0 & 0 & 0 \\
- \frac{q_{12}^1 x_1^1}{x_2^1} &  \frac{q_{12}^1 x_1^1+ q_{32}^1 x_3^1}{x_2^1}  &  - \frac{q_{32}^1 x_3^1}{x_2^1} \\
 0 & - \frac{q_{23}^1 x_2^1}{x_3^1} &  \frac{q_{23}^1 x_2^1}{x_3^1}
\end{smallmatrix}\right], \; \text{and} \\
L^2(\bs x) &= \left[\begin{smallmatrix}
\frac{q^2_{21} x_2^2}{x_1^2} & -  \frac{q^2_{21} x_2^2}{x_1^2} & 0 \\
- \frac{q^2_{12} x_2^1}{x_2^2} &  \frac{q^2_{12} x_2^1+q^2_{32}  x_3^1}{x_2^2}  &  \frac{-q^2_{32} x_3^1}{x_2^2} \\
0 & 0 & 0
\end{smallmatrix}\right].
\end{align*}
Additionally, 
\[
F^j(\bs x) = \left[\begin{smallmatrix}
\frac{x_1^j}{x_1^1+x_1^2} & & \\
& \frac{x_2^j}{x_2^1+x_2^2} & \\
& & \frac{x_3^j}{x_3^1+x_3^2}
\end{smallmatrix}\right], \; j \in \{1,2\}.
\]
For brevity, we focus on dynamics in layer 1. The dynamics~\eqref{eq_p_alpha} can be specialized to this example to get
\begin{align}\label{eq:example-layer1}
  \dot{\bs p}^1 = (-D^1 - L^1(\bs x^1))\bs p^1 + (I - P^1) B^1 \big( F^1(\bs x) \bs p^1 + F^2(\bs x) \bs p^2 \big).  
\end{align}

Note that nodes $2$ and $3$ form a strongly connected sink component in layer 1, while nodes $1$ and $2$ form a strongly connected sink component in layer 2. Thus, $\bar v_1^1 =2, \bar v_2^1 =3, \bar v_1^2=1,$ and $\bar v_2^2 =2$. Additionally, $\hat v_1^1 = 1$ and  $\hat v_1^2 = 3$. For the reduced model, we define
\begin{align*}
    \bar{\bs p}^1 = \left[\begin{smallmatrix}
p_2^1 \\ p^1_3 \end{smallmatrix}\right], \; 
  \bar{\bs p}^2 =  \left[\begin{smallmatrix}
p_1^2 \\ p^2_2 \end{smallmatrix}\right], \; \bar{\bs x}^1 = \left[\begin{smallmatrix}
x_2^1 \\ x^1_3 \end{smallmatrix}\right], \; 
  \bar{\bs x}^2 =  \left[\begin{smallmatrix}
x_1^2 \\ x^2_2 \end{smallmatrix}\right],
\end{align*}
$\hat p^1 = p_1^1$, $\hat p^2 = p_3^2$, $\hat x^1 = x_1^1$, and $\hat x^2 = x_3^2$. We now write dynamics~\eqref{eq:example-layer1} restricted to nodes $2$ and $3$. 
\begin{multline}\label{eq:example-reduced}
    \dot{\bar{\bs p}}^1 = - \bar{D}^1 \bar{\bs p}^1 - \underset{=\bar L^1(\bs x)}{\underbrace{\left[\begin{smallmatrix}
 \frac{q_{12}^1 x_1^1+ q_{32}^1 x_3^1}{x_2^1}  &  - \frac{q_{32}^1 x_3^1}{x_2^1} \\
- \frac{q_{23}^1 x_2^1}{x_3^1} &  \frac{q_{23}^1 x_2^1}{x_3^1}
\end{smallmatrix}\right]} }\bar{\bs p}^1 - \underset{= \hat L^1(\bs x)}{\underbrace{\left[\begin{smallmatrix}
- \frac{q_{12}^1 x_1^1}{x_2^1}  \\
 0 
\end{smallmatrix}\right]}} \hat p^1 \\
+ (I - \bar P^1) \bar B^1 \Bigg(\underset{= \bar F^{11}(\bs x)}{\underbrace{\left[\begin{smallmatrix}
\frac{x_2^1}{x_2^1+x_2^2} & 0\\
0 & \frac{x_3^1}{x_3^1+x_3^2}
\end{smallmatrix}\right]}} \bar{\bs p}^1 +  
\underset{= \hat F^{11}(\bs x)}{\underbrace{\left[\begin{smallmatrix}
0 \\
0
\end{smallmatrix}\right]}} \hat{ p}^1 \\
+ \underset{= \bar F^{12}(\bs x)}{\underbrace{\left[\begin{smallmatrix}
0 & \frac{x_2^2}{x_2^1+x_2^2} \\
0 & 0
\end{smallmatrix}\right]}} \bar{\bs p}^2 + 
\underset{= \hat F^{12}(\bs x)}{\underbrace{\left[\begin{smallmatrix}
0 \\
\frac{x_3^2}{x_3^1+x_3^2}
\end{smallmatrix}\right]}} \hat{ p}^2
\Bigg).
\end{multline}
It can be verified that dynamics~\eqref{eq:example-reduced} is consistent with Corollary~\ref{corr:reduced-model} and the matrices defined in~\eqref{eq:example-reduced} are consistent with the definitions in Section~\ref{sec: analysis-non-strong-connected}.}
\end{document}